\renewcommand\thmcontinues[1]{continued}
\newlength{\defbaselineskip} \setlength{\defbaselineskip}{\baselineskip}
\theoremstyle{theorem}
\newtheorem{theorem}{Theorem}[section]
\newtheorem{corollary}[theorem]{Corollary}
\newtheorem{lemma}[theorem]{Lemma}
\newtheorem{proposition}[theorem]{Proposition}
\newtheorem{notation}[theorem]{Notation}
\newtheorem{pr}{Algorithm}
\newtheorem{fact}[theorem]{Fact}
\theoremstyle{definition} 
\theoremstyle{definition} \newtheorem{example}[theorem]{Example}
 \numberwithin{equation}{section}
 \theoremstyle{definition}
\newtheorem{con}[theorem]{Conjecture}
\newtheorem{definition}[theorem]{Definition}
\newtheorem{exm}[theorem]{Example}
\newtheorem{remark}[theorem]{Remark}
\def\RR{\mathbb{R}}
\def\ZZ{\mathbb{Z}}
\DeclareMathOperator{\rad}{Rad}
\DeclareMathOperator{\HS}{HS}
\DeclareMathOperator{\vol}{vol}
\def\a{\mathbb{A}}
\def\CC{\mathbb{C}}
\def\ob{\begin{obs}}
\def\kob{\end{obs}}
\def\dow{\begin{proof}}
\def\kdow{\end{proof}}
\def\tw{\begin{thm}}
\def\ktw{\end{thm}}
\def\hip{\begin{con}}
\def\khip{\end{con}}
\def\lem{\begin{lema}}
\def\klem{\end{lema}}
\def\ex{\begin{exm}}
\def\prog{\begin{pr}}
\def\kprog{\end{pr}}
\def\wn{\begin{cor}}
\def\kwn{\end{cor}}
\def\uwa{\begin{rem}}
\def\kuwa{\end{rem}}
\def\kex{\end{exm}}
\def\dfi{\begin{df}}
\def\kdfi{\end{df}}
\def\fa{\begin{fact}}
\def\kfa{\end{fact}}
\newcommand{\facets}[1]{\overline{f}(#1)}
\def\mfl{\overline{\varphi}}
\def\ifl{{\varphi}}
\title{Many faces of symmetric edge polytopes}
\author{Alessio D'Al\`i}
\address{Mathematics Institute, University of Warwick, Coventry CV4 7AL, United Kingdom}
\email{Alessio.D-Ali@warwick.ac.uk}
\author{Emanuele Delucchi}
\address{Department of Mathematics, University of Fribourg, 1700 Fribourg, Switzerland}
\email{emanuele.delucchi@unifr.ch}
\author{Mateusz Micha\l ek}
\address{Max Planck Institute for Mathematics in the Sciences, 04103 Leipzig, Germany and \
Aalto University, Espoo, Finland and
Polish Academy of Sciences, Warsaw, Poland}
\email{wajcha2@poczta.onet.pl}
\thanks{}
\subjclass[2010]{Primary: 52B20; Secondary: 52B12, 13P10, 13P25, 05A15.} 
\begin{document}
\maketitle
\begin{abstract}
Symmetric edge polytopes are a class of lattice polytopes constructed from finite simple graphs. In the present paper we highlight their connections to the Kuramoto synchronization model in physics -- where they are called adjacency polytopes -- and to Kantorovich--Rubinstein polytopes from finite metric space theory. Each of these connections motivates the study of symmetric edge polytopes of particular classes of graphs. We focus on such classes and apply algebraic-combinatorial methods to investigate invariants of the associated symmetric edge polytopes.
\end{abstract}

\makeatletter 
\def\l@subsection{\@tocline{2}{0pt}{2pc}{6pc}{}} 
\makeatother

\setcounter{tocdepth}{1}
\tableofcontents
\section{Introduction}
Graphs and polytopes are among the most fundamental objects in mathematics. There are many constructions associating a polytope to a graph. In this article we focus on three of them: symmetric edge polytopes \cite{higashitani2019arithmetic,  higashitani2017interlacing, ohsugi2014centrally, ohsugi2012smooth, MHNOH}, adjacency polytopes \cite{chen2018counting, chen2018toric, chen2019directed} as well as Kantorovich--Rubinstein polytopes and Lipschitz polytopes \cite{vershik2015classification, delucchi2016fundamental, gordon2017combinatorics, jevtic2018cyclohedron, jevtic2018polytopal}. 

All of the described classes are intensely investigated, but each in a separate field. Symmetric edge polytopes, or more precisely the related Ehrhart polynomials, were first studied by algebraic number theorists \cite{BCKV, rod02} and later by algebraic combinatorialists. Adjacency polytopes appeared in the context of the Kuramoto model, describing the behavior of interacting oscillators \cite{kuramoto1975self}. The study of Kantorovich--Rubinstein polytopes of metric spaces is rooted in the research on the transportation problem. This class, together with their polar duals, Lipschitz polytopes, was brought to combinatorialists' attention by Vershik \cite{vershik2015classification} (see also \cite{MPV}), who suggested to study their face structure as a combinatorial invariant of metric spaces. We refer to Section \ref{sec:many} for the definitions and a more careful contextualization of the three classes of polytopes in their respective fields. To our knowledge, the connection between such research areas  does not appear to have been drawn yet.

The aim of our paper is precisely to exhibit such relations and exploit them in order to obtain new results about each of these families of polytopes.

In Section \ref{sec:many} we describe each of the above classes of polytopes and their applications. The main result in the section is Theorem \ref{thm:MainConnection}, which states that symmetric edge polytopes are exactly the adjacency polytopes and their coordinate linear cuts are fundamental polytopes. Hence, indeed, each class may be studied using the methods developed in the other fields. 

In Section \ref{sec:tools} we introduce our main tools. Some of them, like the description of facets of symmetric edge polytopes, have already appeared in one of the fields, but were not known in the other ones. Some, like the connection to the word counting problem and the Goulden--Jackson cluster method described in Section \ref{ssec:words}, are only unraveled in this article. The technique which proves to be the most useful to us is however the explicit description, initiated in \cite{higashitani2019arithmetic}, of some Gr\"obner bases of the associated toric algebras and related unimodular triangulations. These triangulations are very much sought for in the Kuramoto model context in order to develop homotopy techniques. However, to our knowledge, techniques involving Gr\"obner bases have not yet been used in that context.

Our main results are presented in Section \ref{sec:res}. Among them, we show how some important invariants of symmetric edge polytopes, such as the $h^*$-polynomial, change under basic graph constructions. This allows us to confirm the Nevo--Petersen conjecture \cite{nevo2011gamma} in some cases. We also provide very explicit descriptions of the polytopes for families of graphs that are interesting from the Kuramoto model perspective or for the study of metric spaces in the context of computational phylogenetics. Moreover, we give a formula for the number of integer points in polar duals of symmetric edge polytopes. This ties in with the active line of research on integer points in dual pairs of reflexive polytopes. We refer to the introduction to Section \ref{sec:res} for a more detailed statement of our results.

\subsection*{Acknowledgements}
MM would like to thank Piotr Pokora for pointing him to the article \cite{delucchi2016fundamental}, which initiated our research and Hidefumi Ohsugi for the reference \cite{ohsugi2019h} containing many interesting results. This project took shape in April 2019 at the Max Planck Institute for Mathematics in the Sciences in Leipzig, Germany. AD and ED are grateful to the institute for the generous hospitality. ED was supported by the Swiss National Science Foundation professorship grant PP00P2\_150552/1.

\settocdepth{subsection}
\section{Symmetric edge polytopes, Kuramoto model and metric spaces}
\label{sec:many}
In this section we describe the polytopes that are the main object of the article. Our focus is on their three different incarnations: as symmetric edge polytopes \ref{ssec:SEP}, as adjacency polytopes \ref{ssec:Kuramoto}  and as fundamental polytopes \ref{ssec:FundPoly}. As we will see, the first two are the same by definition. The main result of this section is  Theorem \ref{thm:MainConnection}, where we show that fundamental polytopes are precisely coordinate linear cuts of symmetric edge polytopes. 
\subsection{Symmetric edge polytopes}\label{ssec:SEP}
Symmetric edge polytopes are lattice polytopes associated to simple graphs. They were first introduced in \cite{MHNOH}. Let $G$ be a graph with vertex set $V$ and edge set $E$. Consider the lattice $\ZZ^V$ with basis elements $\mathbf{e}_v$ for $v\in V$.
\begin{definition}[Symmetric edge polytope]
The \emph{symmetric edge polytope} associated to a graph $G$ is:
\[\mathcal{P}_G:=\mathrm{conv}\left\{\mathbf{e}_v-\mathbf{e}_w, \mathbf{e}_w-\mathbf{e}_v : \{v,w\} \in E\right\} \subset \RR^V.\]
\end{definition}
\begin{example}[label = exm:K22]
Let $G$ be the complete bipartite graph $K_{2,2}$. The polytope $\mathcal{P}$ is three-dimensional with eight vertices.
\end{example}
Early interest in symmetric edge polytopes was spurred by the fact that certain families of polynomials sharing properties similar to Riemann's $\zeta$ function are Ehrhart polynomials of special symmetric edge polytopes \cite{BCKV, rod02}. Such polynomials have been in the focus of active research at least since P{\'o}lya's work \cite{polya1926bemerkung}, and they appear in different branches of mathematics, e.g., in the study of diophantine equations and Meixner polynomials \cite{kirschenhofer1999algebra}. Algebraic combinatorialists have formulated several conjectures and proved many theorems about zero loci of the Ehrhart polynomials of symmetric edge polytopes \cite{higashitani2019arithmetic,  higashitani2017interlacing, ohsugi2014centrally, ohsugi2012smooth}. In particular, $\mathcal{P}_G$ is always a reflexive polytope, i.e., its dual is also a lattice polytope. Reflexive polytopes, through toric geometry, play an important role in mirror symmetry \cite{Batyrev} and the study of Gorenstein toric varieties \cite{kasprzyk2012fano}.

Several methods turned out to be very useful in the study of symmetric edge polytopes, e.g., the theory of interlacing polynomials \cite{higashitani2017interlacing}. In our work the connection to toric ideals and in particular to Gr\"obner bases played the most important role: we describe these techniques in Section \ref{ssec:GB}. 
Below we present one of the most prominent examples of symmetric edge polytope.
\begin{example}\label{ex:cross} Let $G$ be any tree with $n$ vertices. The symmetric edge polytope $\mathcal{P}_G$ is unimodularly equivalent to the convex hull of the vectors $\{\mathbf{e}_i-\mathbf{e}_{i+1}, 1\leq i\leq n-1\}$ and to the cross-polytope, i.e.~the convex hull of the signed basis vectors of the lattice $\{\pm \mathbf{e}_i, 1\leq i\leq n-1\}$. It has $2^{n-1}$ facets corresponding to orthants of $\RR^{n-1}$. All roots of the Ehrhart polynomial lie on a line $\{z:\texttt{Re}(z)=-\frac{1}{2}\}$, cf.~\cite[Example 3.3]{higashitani2017interlacing}.
\end{example}  
\subsection{The Kuramoto model}\label{ssec:Kuramoto}

The Kuramoto model describes the behaviour of interacting oscillators. Classically, these are modeled by differential equations in the phase angles of the oscillators, with constant coefficients (one for each pair of oscillators, determining the strength of the coupling) and one constant frequency for each oscillator \cite{kuramoto1975self}.

The oscillators are often represented by vertices of a graph $G$. An edge of $G$ joins two given oscillators if they interact directly. The Kuramoto model found a place in many applications, e.g.~in physics, biology, chemistry, engineering and even social networks \cite{dorfler2014synchronization}. One of the fundamental problems is to understand the steady states of the system (that is: when the differentials of the phase angles vanish). 

In a recent series of papers \cite{chen2019directed,chen2018toric,chen2018counting}, a new method for the study of the steady states by means of \emph{adjacency polytopes} associated to $G$ has been put forward. It turns out that, by definition, the adjacency polytope of a graph $G$ is exactly the symmetric edge polytope $\mathcal{P}_G$. 
A main feature of the new approach is a change of variables in the original system of equations, which reduces the problem to solving (Laurent) polynomial equations in the algebraic torus $(\CC^*)^n$. In order to bound the number of solutions, the authors apply the theory developed by Kushnirenko and Bernstein \cite{bernvstein1975number, kushnirenko1976newton}, cf.~\cite{kaveh2012newton}, where the normalized volume of the adjacency polytope plays a central role.
 The authors of \cite{chen2018counting} refer to it as the \emph{adjacency polytope bound} and prove that in many cases it gives much better estimates of the maximal number of possible solutions than those obtained with previously available methods. Furthermore, we note that regular triangulations of adjacency polytopes are central in the homotopy methods developed in this context \cite{chen2018toric}. 

So far this technique has been carried out in practice only for very special families of graphs. We believe that one of the main reasons is that adjacency polytopes, alias symmetric edge polytopes, are quite complicated combinatorial objects themselves and providing triangulations and facet descriptions is nontrivial. One of the missing tools seemed to be the theory of Gr\"obner bases, which we apply successfully here. In particular, we provide explicit results about triangulations and volumes for many families of graphs. Our results are not exhaustive and we believe that our method can be successfully applied to many other graphs. 

\subsection{Fundamental polytopes of metric spaces}\label{ssec:FundPoly}
In \cite{vershik2015classification} Vershik proposed a combinatorial study of finite metric spaces via the face structure of certain polytopes that arise in the context of Kantorovich and Rubinstein's work on the transportation problem. Let $(X,d)$ be a finite metric space.
\begin{definition}[Kantorovich--Rubinstein polytope]
Let $\RR^X$ be the real vector space with the basis $\mathbf{e}_j$ for $j\in X$. The {\em Kantorovich--Rubinstein} polytope $KR(X,d)$ is the following convex hull:
\[KR(X,d):=\mathrm{conv}\left\{\frac{\mathbf{e}_i-\mathbf{e}_j}{d(i,j)}:i,j\in X\right\}.\]
\end{definition}

The dual of the KR polytope is known as the \emph{Lipschitz polytope} as its points represent Lipschitz functions on $(X,d)$ with Lipschitz constant equal to one. Explicitly, it is defined by
\[LIP(X,d)=\{x\in\RR^X:\sum_ix_i=0, x_i-x_j\leq d(i,j)\ \forall_{i,j\in X}\}.\]
The name {\em fundamental polytope} of $(X,d)$ has been used in \cite{delucchi2016fundamental,vershik2015classification} in order to refer to the polytope $KR(X,d)$. The problem then \cite[Problem 1.1]{vershik2015classification} is to relate the number of faces (and their incidences) of the fundamental polytope to the structure of the metric space.

This line of research has been taken up in the literature from different points of view, see \cite{delucchi2016fundamental,gordon2017combinatorics, jevtic2018cyclohedron, jevtic2018polytopal}. Face numbers of fundamental polytopes were computed for a class of ``generic'' metric spaces by Gordon and Petrov \cite[Introduction]{gordon2017combinatorics}, and in \cite{delucchi2016fundamental} for ``tree-like'' metric spaces. In order to motivate the name of the latter class, recall the following classical construction. Let $G=(V,E)$ be a graph and consider a weight function $w:E\to \mathbb R_+$. This data defines a metric space on any subset $V_1\subseteq V$ of the vertices of $G$ by letting the distance between any two vertices in $V_1$ be the minimum weight of a path in $G$ that joins them (the weight of a path being the sum of the weights of its edges). Such weighted graphs are used for instance in phylogenetic analysis in order to encode genetic dissimilarity data. In this context, the structure of $G$ reflects the fact that such data are given as a set of weighted bipartitions of the set of vertices. The resulting graphs are called {\em splits networks} and the metric spaces they represent are called {\em split-decomposable}, see e.g.~\cite{Huson} and cf.~also Section \ref{ssec:splits}. The most basic examples of splits graphs are trees (i.e., acyclic, connected graphs), and the associated metric spaces are called {\em tree-like}.  Somewhat surprisingly, even in this basic case the formulas for the face numbers found in \cite{delucchi2016fundamental}, although explicit, are quite involved. No explicit formulas are found in the literature beyond the above-mentioned cases of trees and ``generic'' spaces. This level of complexity is less surprising once one considers our first Theorem \ref{thm:MainConnection}, showing that that a special class of KR-polytopes of metric spaces consists of linear sections of symmetric edge polytopes.

\subsection{Connections}\label{ssec:MainConnection}
We next exhibit the connections between fundamental polytopes and symmetric edge polytopes. Let $G=(V,E)$ be a simple, connected graph.
For a subset $V_1\subset V$ we will use the notation $\mathcal{P}_G \cap \mathbb{R}^{V_1}$ to denote the intersection of $\mathcal{P}_G$ with the linear space $\{x_i = 0 \mid i \notin V_1\}$. On $V_1$ we consider the metric $d$ defined as above with the trivial weighting $w(e)=1$ for all $e\in E$ and we set
$$
\mathcal{K}_{G,V_1} := KR(V_1,d).
$$

\begin{theorem}\label{thm:MainConnection}
For any subset of vertices $V_1\subset V$ we have:
\[\mathcal{P}_G \cap \mathbb{R}^{V_1} = \mathcal{K}_{G, V_1}.\]
\end{theorem}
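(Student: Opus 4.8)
The plan is to prove the two inclusions $\mathcal{K}_{G,V_1} \subseteq \mathcal{P}_G \cap \mathbb{R}^{V_1}$ and $\mathcal{P}_G \cap \mathbb{R}^{V_1} \subseteq \mathcal{K}_{G,V_1}$ separately. The first is elementary; the second is the heart of the matter and will be obtained from the classical flow-decomposition theorem applied to the graph $G$.

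For $\mathcal{K}_{G,V_1} \subseteq \mathcal{P}_G \cap \mathbb{R}^{V_1}$, since the right-hand side is convex it suffices to check that each vertex $\frac{\mathbf{e}_i - \mathbf{e}_j}{d(i,j)}$ of $\mathcal{K}_{G,V_1}$ (with $i,j\in V_1$) belongs to it. Such a point has support $\{i,j\}\subseteq V_1$, hence lies in $\mathbb{R}^{V_1}$; and if $i = u_0, u_1, \dots, u_k = j$ is a shortest path in $G$, so $k=d(i,j)$ and $\{u_t,u_{t+1}\}\in E$, the telescoping identity
\[
\frac{\mathbf{e}_i - \mathbf{e}_j}{d(i,j)} = \frac{1}{k}\sum_{t=0}^{k-1}(\mathbf{e}_{u_t} - \mathbf{e}_{u_{t+1}})
\]
exhibits it as a convex combination of vertices of $\mathcal{P}_G$.

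For the reverse inclusion I would start from $x \in \mathcal{P}_G$ with $x_v = 0$ for all $v\notin V_1$ and write it as a convex combination of vertices of $\mathcal{P}_G$: equivalently, as weights $\lambda_{(v,w)}\geq 0$ indexed by ordered pairs with $\{v,w\}\in E$, with $\sum_{(v,w)}\lambda_{(v,w)} = 1$ and $x = \sum_{(v,w)}\lambda_{(v,w)}(\mathbf{e}_v - \mathbf{e}_w)$. Reading $\lambda$ as a nonnegative flow on the arcs of $G$, the coordinate $x_v$ is exactly the net flow out of $v$, which vanishes outside $V_1$ by hypothesis. Now I would invoke flow decomposition to write $\lambda = \sum_P \mu_P\,\chi_P + \sum_C \nu_C\,\chi_C$ as a nonnegative combination of indicator flows of directed paths $P$ and directed cycles $C$, where each path $P$ runs from a vertex of positive net outflow to one of negative net outflow, so that both endpoints $s(P),t(P)$ of $P$ lie in $V_1$. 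Summing the contributions, the cycles cancel and $x = \sum_P \mu_P(\mathbf{e}_{s(P)} - \mathbf{e}_{t(P)})$; moreover, equating total flow over all arcs gives $\sum_P \mu_P\,\ell(P) + \sum_C \nu_C\,\ell(C) = \sum_{(v,w)}\lambda_{(v,w)} = 1$, where $\ell(\cdot)$ denotes length, so from $\ell(P)\geq d(s(P),t(P))$ we obtain $\sum_P \mu_P\, d(s(P),t(P)) \leq 1$. Rewriting each term as $\mu_P\, d(s(P),t(P)) \cdot \frac{\mathbf{e}_{s(P)} - \mathbf{e}_{t(P)}}{d(s(P),t(P))}$ then presents $x$ as a nonnegative combination of vertices of $\mathcal{K}_{G,V_1}$ whose coefficients sum to at most $1$; since $\mathcal{K}_{G,V_1}$ is centrally symmetric it contains the origin (when $|V_1|\geq 2$; the degenerate cases $|V_1|\leq 1$ are checked directly), so such a combination lies in $\mathcal{K}_{G,V_1}$.

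I expect the only delicate point to be the flow-decomposition step: one must make sure that the paths in the decomposition have both endpoints in $V_1$ — which is precisely where $x_v = 0$ for $v\notin V_1$ is used — and that their $\mu_P$-weighted total length is bounded by the total mass $1$ of the convex combination. The rest is bookkeeping.
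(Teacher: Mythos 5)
Your proof is correct and follows essentially the same approach as the paper. The paper's iterative ``subtract a path $p_k$'' construction is an inline proof of exactly the flow-decomposition theorem you invoke as a black box; both arguments then use the conservation $x_v=0$ for $v\notin V_1$ to force path endpoints into $V_1$, bound the sum $\sum_P \mu_P\, d(s(P),t(P))$ by the total mass $1$ via $d(s(P),t(P))\leq\ell(P)$, and conclude using $\mathbf{0}\in\mathcal{K}_{G,V_1}$.
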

Let us record here some remarks that will come in handy in the proof of Theorem \ref{thm:MainConnection}.

\begin{remark} \label{NonMinimalPathRemark}
One has that $\frac{\mathbf{e}_i - \mathbf{e}_j}{d'} \in \mathcal{K}_{G, V_1}$ whenever $i, j \in V_1$ and $d' \geq d_G(i, j)$. This holds because $\mathbf{0},\frac{\mathbf{e}_i - \mathbf{e}_j}{d_G(i,j)} \in \mathcal{K}_{G, V_1}$ and the point $\frac{\mathbf{e}_i - \mathbf{e}_j}{d'}$ lies on the line segment between $\mathbf{0}$ and $\frac{\mathbf{e}_i - \mathbf{e}_j}{d_G(i,j)}$.
\end{remark}

\begin{remark} \label{GraphRepRemark}
Let $\mathbf{q} \in \mathcal{P}_G$. Then $\mathbf{q}$ is a convex combination of $\pm (\mathbf{e}_i - \mathbf{e}_j)$ for $\{i,j\} \in E(G)$. We can associate with $\mathbf{q}$ a directed weighted graph: if $\mathbf{e}_i-\mathbf{e}_j$ and $\mathbf{e}_j-\mathbf{e}_i$ appear in $\mathbf{q}$ with weights $\omega_1 < \omega_2$, then we will draw the directed edge going from $i$ to $j$ with weight $\omega_2 - \omega_1 > 0$. The weights in such a directed weighted graph will always be nonnegative and sum up to at most one. (Since $\mathbf{0} \in \mathcal{P}_G$, we can still regard this as a convex combination.) Moreover, we will always assume without loss of generality that such a directed graph is acyclic. Indeed, if a directed cycle $i_1 \xrightarrow{\lambda_1} i_2 \xrightarrow{\lambda_2} \cdots \xrightarrow{\lambda_{m-1}} i_m \xrightarrow{\lambda_m} i_1$ exists, we can obtain a new representation of $\mathbf{q}$ by subtracting $\mu := \min_{j}\lambda_j$ from every edge in the cycle. At least one of the new weights $(\lambda_j-\mu)$ is then zero, and thus the corresponding edge has been deleted from the directed graph. 
\end{remark}

\begin{remark} \label{BalancedRemark}
If $\mathbf{q} \in \mathcal{P}_G \cap \mathbb{R}^{V_1}$, consider a graphical representation of $\mathbf{q}$ as in Remark \ref{GraphRepRemark}. Then for each vertex not in $V_1$ the sum of the weights of the incoming edges must equal the sum of the weights of the outcoming ones. We will say that these vertices are \emph{balanced}.
\end{remark}

\begin{proof}[Proof of Theorem \ref{thm:MainConnection}]
By construction, $\mathcal{K}_{G, V_1}$ is contained in $\mathbb{R}^{V_1}$. To see that it is also contained in $\mathcal{P}_G$, pick two vertices $i, j\in V_1$ and consider a minimal path in $G$ between them, say $i = i_0 \to i_1 \to \ldots \to i_d = j$. Then \[\frac{1}{d}(\mathbf{e}_i-\mathbf{e}_j) = \frac{1}{d}(\mathbf{e}_{i_0}-\mathbf{e}_{i_1}) + \frac{1}{d}(\mathbf{e}_{i_1}-\mathbf{e}_{i_2}) + \ldots + \frac{1}{d}(\mathbf{e}_{i_{d-1}}-\mathbf{e}_{i_d}),\] which is a convex combination of points in $\mathcal{P}_G$.

Let us now prove that $\mathcal{P}_G \cap \mathbb{R}^{V_1}$ is contained into $\mathcal{K}_{G, V_1}$. Pick a point $\mathbf{q}$ in $\mathcal{P}_G \cap \mathbb{R}^{V_1}$ and consider a representation of it as a weighted directed acyclic graph as in Remark \ref{GraphRepRemark}.

First note that, if the representation of $\mathbf{q}$ admits no vertices in $V_1$ with at least one outcoming edge, then there are no edges at all and hence $\mathbf{q} = \mathbf{0}$. To see this, assume by contradiction that an edge exists. Then its source $v$ must be not in $V_1$. Since such vertices are balanced by Remark \ref{BalancedRemark}, there must be another edge whose target is $v$, and so on. Going on with this process we either create a cycle or meet a vertex in $V_1$, and both of these possibilities are forbidden.

Now assume that $\mathbf{q} \neq \mathbf{0}$ and pick a vertex $i_1\in V_1$ with an outcoming edge. Following this edge we either get to a vertex in $V_1$ or in $V\setminus V_1$. In the latter case, by balancedness there must exist an outcoming edge, which we then follow. After a finite number of steps we meet a vertex $j_1\in V_1$ and we have thus created a path $\tilde{p}_1$ of length $d_1$ between the two vertices $i_1$ and $j_1$ in $V_1$. Call $\mu_1$ the smallest weight to be found on the path $\tilde{p}_1$, and call $p_1$ the path obtained from $\tilde{p}_1$ by replacing all weights by $\mu_1$. We can now modify our directed graph by ``subtracting $p_1$'', i.e.~by substituting all weights $\lambda$ of the edges of $\tilde{p}_1$ by $\lambda - \mu_1$. This yields a new weighted directed acyclic graph with a strictly smaller number of edges and where vertices not in $V_1$ are still balanced. Such a graph is a representation of the point $\mathbf{q} - \mu_1(\mathbf{e}_{j_1}-\mathbf{e}_{i_1})$. We now repeat the whole procedure until, after a finite number $N$ of steps, we have the edgeless graph, which corresponds to the point $\mathbf{0}$. Summing all the contributions we then get a decomposition
\begin{equation} \label{PathDecompositionEq}
\mathbf{q} = \sum_{k=1}^{N}\mu_k (\mathbf{e}_{j_k} - \mathbf{e}_{i_k}) = \sum_{k=1}^{N}d_k \mu_k \frac{\mathbf{e}_{j_k} - \mathbf{e}_{i_k}}{d_k},
\end{equation}
where $\frac{\mathbf{e}_{j_k} - \mathbf{e}_{i_k}}{d_k} \in \mathcal{K}_{G, V_1}$ by Remark \ref{NonMinimalPathRemark}, and $\sum_{k=1}^{N}d_k \mu_k$ equals the sum (which is at most one) of the original weights in the directed graph associated with $\mathbf{q}$. Since $\mathbf{0} \in \mathcal{K}_{G, V_1}$, we can regard $\sum_{k=1}^{N}d_k \mu_k \frac{\mathbf{e}_{j_k} - \mathbf{e}_{i_k}}{d_k}$ as a convex combination of points in $\mathcal{K}_{G, V_1}$, as desired.
\end{proof}
\begin{corollary}
The Lipschitz polytope $LIP(G,V_1):=(\mathcal{K}_{G, V_1})^*$ is the projection of the lattice polytope $(\mathcal{P}_G)^*$ by the map dual to the inclusion $\RR^{V_1}\subset\RR^V$.  
\end{corollary}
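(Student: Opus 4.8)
The plan is to deduce this from Theorem~\ref{thm:MainConnection} together with the standard fact that polarity interchanges linear sections and dual projections. First I would fix the ambient spaces carefully, since $\mathcal{P}_G$ is not full-dimensional in $\RR^V$: all its vertices lie on the hyperplane $H := \{x \in \RR^V : \sum_v x_v = 0\}$, so the polar $(\mathcal{P}_G)^*$ is to be taken inside $H^* \cong \RR^V/\RR\mathbf{1}$, and likewise $\mathcal{K}_{G,V_1} = \mathcal{P}_G \cap \RR^{V_1}$ lives in $H_1 := \RR^{V_1}\cap H$, with $LIP(G,V_1) = (\mathcal{K}_{G,V_1})^* \subseteq H_1^*$. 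Under these identifications, the map ``dual to the inclusion $\RR^{V_1}\subset\RR^V$'' is the surjection $\pi := \iota^* : H^* \to H_1^*$ induced by the subspace inclusion $\iota : H_1 \hookrightarrow H$; concretely it sends the class of $y\in\RR^V$ to the class of its restriction $y|_{V_1}$. By Theorem~\ref{thm:MainConnection} the claim $\pi((\mathcal{P}_G)^*) = (\mathcal{K}_{G,V_1})^*$ becomes $\iota^*((\mathcal{P}_G)^\circ) = (\mathcal{P}_G\cap H_1)^\circ$, an instance of the general identity $\iota^*(P^\circ) = (P\cap U)^\circ$ valid for a polytope $P$ with $\mathbf 0$ in its relative interior and a linear subspace $U$ of its affine hull.

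Next I would establish this identity in our situation, which reduces to a short explicit computation. Since $\mathcal{P}_G$ is centrally symmetric with vertices $\pm(\mathbf e_v - \mathbf e_w)$ for $\{v,w\}\in E$, its polar is $(\mathcal{P}_G)^* = \{[y] : |y_v - y_w|\le 1 \text{ for all } \{v,w\}\in E\}$; similarly $(\mathcal{K}_{G,V_1})^* = \{[z] : |z_i - z_j|\le d_G(i,j) \text{ for all } i,j\in V_1\}$, which is precisely $LIP(G,V_1)$. The inclusion $\pi((\mathcal{P}_G)^*) \subseteq (\mathcal{K}_{G,V_1})^*$ is immediate: if $|y_v-y_w|\le 1$ along every edge, telescoping along a shortest path from $i$ to $j$ in $V_1$ gives $|y_i-y_j|\le d_G(i,j)$. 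For the reverse inclusion I would invoke a one-Lipschitz extension: given $z$ with $|z_i-z_j|\le d_G(i,j)$ on $V_1$, the McShane--Whitney formula $y_v := \min_{i\in V_1}\bigl(z_i + d_G(i,v)\bigr)$ defines a function on all of $V$ with $y|_{V_1} = z$ and $|y_v - y_w|\le d_G(v,w)$ for all $v,w$ (by the triangle inequality), in particular $|y_v-y_w|\le 1$ on edges; hence $[y]\in(\mathcal{P}_G)^*$ and $\pi([y]) = [z]$. (Equivalently, one argues abstractly: $(\mathcal{K}_{G,V_1})^\circ$ is the unit ball of the norm on $H_1^*$ dual to the gauge of $\mathcal{P}_G$ restricted to $H_1$, and Hahn--Banach supplies the required extension of a functional.) This uses connectedness of $G$, so that $d_G$ is everywhere finite, and reflexivity of $\mathcal{P}_G$, so that $\mathbf 0$ lies in its interior; both hypotheses are available.

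Finally I would note that $\pi$ is lattice-compatible: the coordinate sublattice $\ZZ^{V_1}\cap H$ is saturated in $\ZZ^V\cap H$, so the dual map carries the dual lattice onto the dual lattice, and the image of the lattice polytope $(\mathcal{P}_G)^*$ (a lattice polytope because $\mathcal{P}_G$ is reflexive) is again a lattice polytope. I expect the only genuine subtlety to be the bookkeeping of ambient spaces --- pinning down in which space each polar is formed and what ``the map dual to the inclusion'' means once the hyperplane $\sum_v x_v = 0$ is taken into account --- while the one non-formal ingredient is the Lipschitz (Hahn--Banach) extension, which is classical.
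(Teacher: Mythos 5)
The paper leaves this corollary unproved, treating it as an immediate consequence of Theorem~\ref{thm:MainConnection} together with the standard fact that polar duality interchanges sections with dual projections; your argument is a correct and careful unpacking of exactly that intended route. You handle the one genuine subtlety (everything must be read inside the hyperplane $\sum_v x_v=0$ and its dual, and you must check $\mathbf 0$ is interior on both sides), and the McShane--Whitney extension is a clean way to make the surjectivity of $\pi\colon(\mathcal P_G)^*\to(\mathcal K_{G,V_1})^*$ explicit rather than merely citing the abstract polarity identity.
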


\settocdepth{section}
\section{Algebraic and combinatorial methods}\label{sec:tools}
In this section we describe the various methods we will apply to study the symmetric edge polytopes. 
\subsection{Facets of symmetric edge polytopes}\label{ssec:faces}
A fundamental theorem of polytope theory states that a subset of Euclidean space is the convex hull of a finite set of points (i.e., a polytope) if and only if it is the intersection of a finite number of closed halfspaces. A {\em face} of a polytope $P$ is any subset of $P$ that can be obtained as the intersection of $P$ with a zero set of an affine linear form that is nonnegative on $P$ \cite[\S 2.1]{ziegler}. The set of all faces of $P$, partially ordered by inclusion, is the {\em face poset} $\mathscr F(P)$ of $P$ (notice that $\mathscr F(P)$ has a unique maximal element, $P$ itself, and a unique minimal element, the empty face $\emptyset$).  The \emph{$f$-vector} of a $d$-dimensional polytope is $f(P)=(f_0,f_1,\dots, f_{d-1},f_d)$, where $f_i$ is the number of $i$-dimensional faces. By convention we often set $f_{-1}=1$, counting the face corresponding to the empty set. We define the \emph{$f$-polynomial} by 
\begin{equation}\label{eq:fpoly}
f(t)=\sum_{j=0}^{d+1} f_{j-1}t^j. 
\end{equation}

For a polytope $P$ of dimension $d$,  a face of dimension $d-1$ is called a {\em facet}, and we will write $\facets{P}$ for the number of facets of a polytope $P$ when the dimension of $P$ needs not be specified. If $P$ is full-dimensional, i.e.~$P\subset \RR^d$, then each facet $F$ determines, uniquely up to a positive constant, a linear function $l\in (\RR^d)^*$ such that $l$ is constant on $F$ and $l(f)\leq l(p)$ for any $f\in F$ and $p\in P$. 

Symmetric edge polytopes are not full dimensional as they are contained in the hyperplane $\RR^V_0\subset\RR^V$ defined by setting the sum of all coordinates equal to zero. If the graph $G$ is connected, then $\mathcal{P}_G$ is full dimensional in $\RR^V_0$. The inclusion $\RR^V_0\subset\RR^V$ gives rise to the dual surjection $(\RR^V)^*\rightarrow(\RR^V_0)^*$. Hence, linear forms in $(\RR^V_0)^*$ may be represented by elements of $(\RR^V)^*$, which in turn can be identified with functions $f\colon V\rightarrow \RR$. Such a representation is not unique: more precisely, two functions $f_1,f_2$ are identified as elements of  $(\RR^V_0)^*$ if they differ by a constant, i.e.~$f_1(v_1)-f_2(v_1)=f_1(v_2)-f_2(v_2)$ for every $v_1,v_2\in V$. The representation becomes unique if, for example, we fix $v\in V$ and assume $f(v)=0$. Using this identification one can give an explicit description of when $f$ corresponds to a facet of $\mathcal{P}_G$.
\begin{theorem}\label{thm:facets}\cite[Theorem 3.1]{higashitani2019arithmetic}
Let $G=(V,E)$ be a finite simple connected graph. Then $f\colon V\rightarrow \ZZ$ is facet-defining if and only if
\begin{itemize}
\item[(i)] for any edge $e=uv$ we have $|f(u)-f(v)|\leq 1$, and
\item[(ii)] the subset of edges $E_f=\{e=uv\in E \colon |f(u)-f(v)|=1\}$ forms a spanning connected subgraph of $G$.
\end{itemize}
\end{theorem}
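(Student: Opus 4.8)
The plan is to pass to the dual description and reduce the statement to a classical rank computation. For $f\colon V\to\mathbb{R}$ and $x\in\mathbb{R}^V$ write $\langle f,x\rangle:=\sum_{v\in V}f(v)\,x_v$; then $\langle f,\cdot\rangle$ takes the value $f(u)-f(v)$ on the vertex $\mathbf{e}_u-\mathbf{e}_v$ of $\mathcal{P}_G$, so its maximum over $\mathcal{P}_G$ equals $\max_{uv\in E}|f(u)-f(v)|$. First I would record that the inequality $\langle f,x\rangle\le 1$ is valid on $\mathcal{P}_G$ if and only if $|f(u)-f(v)|\le 1$ on every edge, i.e.\ exactly when (i) holds. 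Since $\mathcal{P}_G$ is reflexive, the facets of $\mathcal{P}_G$ correspond through this normalization to the vertices of the lattice polytope $(\mathcal{P}_G)^*$; hence, for $f\colon V\to\mathbb{Z}$, being facet-defining means precisely that (i) holds \emph{and} that the face $F_f:=\{x\in\mathcal{P}_G:\langle f,x\rangle=1\}$ has dimension $\dim\mathcal{P}_G-1=|V|-2$ (connectedness of $G$ gives $\dim\mathcal{P}_G=|V|-1$). An integral $f$ violating (i) makes $\langle f,\cdot\rangle$ exceed $1$ somewhere on $\mathcal{P}_G$ and is therefore not facet-defining, in agreement with the statement, so from now on (i) may be assumed.

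Under (i), I would next identify $F_f$. If $f$ is constant then $E_f=\emptyset$, the maximum of $\langle f,\cdot\rangle$ is $0$, no proper face is cut out, and (ii) fails, so the equivalence is clear in this case. Otherwise the maximum is $1$ and
\[
F_f=\operatorname{conv}\{\mathbf{e}_u-\mathbf{e}_v : uv\in E,\ f(u)-f(v)=1\},
\]
which has exactly one vertex per edge of $E_f$ (its orientation being forced by $f$) and lies in the affine hyperplane $\{\langle f,x\rangle=1\}$, which does not pass through the origin.

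The core of the argument is the resulting dimension count. Since $\operatorname{aff}(F_f)$ avoids the origin, $\dim F_f=\dim_{\mathbb{R}}\operatorname{span}\{\mathbf{e}_u-\mathbf{e}_v : uv\in E_f\}-1$, the orientation being irrelevant because negating a vector does not change the span. Now I would invoke the standard fact that, for any graph $H$ on vertex set $V$, the vectors $\mathbf{e}_u-\mathbf{e}_v$ over the edges of $H$ span $\{x\in\mathbb{R}^V:\sum_{v\in C}x_v=0\text{ for each connected component }C\text{ of }H\}$, of dimension $|V|-c(H)$ where $c(H)$ is the number of components of $H$. With $H=(V,E_f)$ this yields $\dim F_f=|V|-c(V,E_f)-1$, so $F_f$ is a facet if and only if $c(V,E_f)=1$, i.e.\ if and only if $(V,E_f)$ is a spanning connected subgraph of $G$; this is condition (ii), and (ii) in turn forces $E_f\neq\emptyset$ once $|V|\ge 2$. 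Combining this with the validity reduction from the first step gives: $f$ is facet-defining if and only if (i) and (ii) both hold.

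I do not expect a real obstacle here. The one point requiring care is the normalization: ``$f$ is facet-defining'' must be read as ``$\langle f,x\rangle\le 1$ is a valid inequality for $\mathcal{P}_G$ whose equality locus is a facet'' (equivalently, $f$ is a vertex of $(\mathcal{P}_G)^*$), so that non-primitive multiples of a genuine facet functional are correctly excluded --- indeed such multiples violate (i). Beyond that, the whole proof reduces to the rank of a graph's signed incidence matrix, which is classical.
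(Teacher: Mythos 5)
The paper does not give its own proof of this statement: it is imported as a citation of \cite[Theorem~3.1]{higashitani2019arithmetic}. So there is no in-paper argument to compare against; I can only assess your proof on its own terms, and it checks out. Your argument is the natural one: (i) is exactly validity of the inequality $\langle f,\cdot\rangle\le 1$ because $\langle f,\cdot\rangle$ takes the values $\pm(f(u)-f(v))$ on the generating vertices; under (i) the face $F_f=\{x\in\mathcal P_G:\langle f,x\rangle=1\}$ is $\operatorname{conv}\{\mathbf{e}_u-\mathbf{e}_v:f(u)-f(v)=1\}$; and since this affine hull misses the origin, $\dim F_f=\operatorname{rank}\{\mathbf{e}_u-\mathbf{e}_v:uv\in E_f\}-1=|V|-c(V,E_f)-1$, so $F_f$ has codimension one in $\mathcal P_G$ (whose dimension is $|V|-1$ by connectedness of $G$) exactly when $(V,E_f)$ is a spanning connected subgraph. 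The constant case is correctly dispatched. Your reading of ``facet-defining'' as ``$\langle f,x\rangle\le 1$ is valid and its equality locus is a facet'' is the right one, and with that reading you do not actually need to invoke reflexivity of $\mathcal P_G$ anywhere --- the appeal to $(\mathcal P_G)^{\triangle}$ being a lattice polytope is dispensable for the stated ``if and only if'' (it would only be needed to conclude that \emph{every} facet is realized by some integral $f$, which is a separate surjectivity claim). Flagging this is worthwhile because reflexivity of $\mathcal P_G$ is often deduced \emph{from} precisely this facet description, so leaning on it risks a circularity that your argument, in fact, avoids.
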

The vertices of the polytope that belong to the facet defined by $f$ correspond to those directed edges $(u,v)\in E$ for which $f(v)-f(u)=1$.
\begin{notation}\label{not:G_F}
For a symmetric edge polytope $\mathcal{P}_G$ and a facet $F$ associated with the labeling $f\colon V(G) \to \ZZ$, we will denote by $G_F$ the (oriented) subgraph of $G$ obtained by selecting only those edges $(u, v)$ for which $f(v) - f(u) = 1$.
\end{notation}
\begin{example}[continues = exm:K22]
Let $G=K_{2,2}$. We may consider a function $f$ that takes value zero on two vertices that are not joined and value one on two other vertices. This function satisfies the assumptions of Theorem \ref{thm:facets} and hence it determines a facet $F$. We have four edges $(u,v)$ in $G_F$ (i.e.~for which $f(v)-f(u)=1$), namely $u$ must belong to the part on which $f$ vanishes and $v$ to the other part. These four edges correspond to four vertices:
$$(-1,0,1,0);(-1,0,0,1);(0,-1,1,0);(0,-1,0,1).$$
This is a square, which is a facet of the polytope described in Example \ref{exm:K22}. The graph $G_F$ is presented in Figure \ref{Fig:K22} at the end of Section \ref{ssec:GB}.
\end{example}
As we have seen in Section \ref{ssec:Kuramoto} the normalized volume is one of the crucial invariants we would like to understand. It is also important in the study of Ehrhart polynomials, as it is (up to the factorial of the dimension of the polytope) the coefficient of the leading term and equals the degree of the corresponding toric variety. 

One of the methods to compute volumes of reflexive polytopes is to find, if possible, a unimodular triangulation of the boundary, i.e.~a subdivision into lattice simplices of minimal possible volume. This induces a subdivision of the polytope itself into simplices, by extending each simplex on the boundary by $\bf{0}$. 
As each facet of a reflexive polytope is of lattice distance one to ${\bf 0}$ each of the new simplices is also unimodular. Hence, the normalized volume of the polytope equals the number of such simplices. The following result provides a combinatorial description of simplices in a given facet. 
\begin{corollary}\label{cor:simpinfacet}\cite[Corollary 3.2]{higashitani2019arithmetic} 
The unimodular simplices contained in a facet of $\mathcal{P}_G$ represented by a function $f$ correspond exactly to (undirected) spanning trees that are subgraphs of $G_F$.
\end{corollary}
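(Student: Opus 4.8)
The plan is to combine Theorem~\ref{thm:facets} with the standard fact that a lattice simplex spanned by differences $\mathbf{e}_v - \mathbf{e}_w$ is unimodular precisely when the corresponding edges form a spanning tree. First I would set up the dictionary: by the remark following Theorem~\ref{thm:facets}, the vertices of the facet $F$ defined by $f$ are exactly the lattice points $\mathbf{e}_v - \mathbf{e}_u$ for the directed edges $(u,v)$ of $G_F$. So a simplex contained in $F$ is the convex hull of a subset $S$ of these vertices, and it is a face-simplex of the triangulation iff $S$ is affinely independent; since $F$ has dimension $\dim \mathcal{P}_G - 1 = |V| - 2$, a \emph{full-dimensional} (in $F$) simplex uses exactly $|V| - 1$ of these points. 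Thus I want to show: a set of $|V|-1$ directed edges of $G_F$ gives an affinely independent — equivalently, after translating by one of the vertices, linearly independent — set of vectors $\mathbf{e}_v - \mathbf{e}_u$ iff the underlying undirected edges form a spanning tree of $G_F$ (equivalently of $G$, since $G_F$ is spanning), and moreover that in that case the simplex is unimodular.

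The key step is the linear-algebra core. Consider the $|V| \times (|V|-1)$ incidence-type matrix whose columns are the vectors $\mathbf{e}_v - \mathbf{e}_u$ for the chosen edges. I would recall (or quote) the classical result that for the vertex-edge incidence matrix of a graph, a collection of columns is linearly independent over $\mathbb{Q}$ iff the corresponding edge subset is a forest, and that the determinant of any maximal nonsingular square submatrix (obtained by deleting one row, i.e.\ fixing $f(v_0) = 0$ in the language of Section~\ref{ssec:faces}) is $\pm 1$ — this is exactly the total unimodularity of the incidence matrix of a graph, together with the matrix-tree-type observation that a nonvanishing such determinant equals $\pm 1$. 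Hence $|V|-1$ edges forming a spanning tree give a $\pm 1$ determinant, so the simplex on $\{\mathbf{0}\} \cup \{\mathbf{e}_v - \mathbf{e}_u\}_{e \in T}$ is unimodular, and conversely any affinely independent choice of $|V|-1$ vertices of $F$ must be a forest with $|V|-1$ edges, i.e.\ a spanning tree.

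I would then assemble these pieces: a unimodular simplex contained in the facet $F$ is necessarily full-dimensional in $F$ (unimodular simplices in a triangulation of a reflexive polytope's boundary all have the same, maximal, dimension within each facet — or one simply restricts attention to the facets of the triangulation), its vertex set lies among the vertices of $F$, which are the $\mathbf{e}_v - \mathbf{e}_u$ for $(u,v) \in G_F$, and by the previous paragraph such a vertex set spans a unimodular simplex iff the underlying edges form a spanning tree of $G_F$. This yields the claimed bijection. I expect the main obstacle — really a bookkeeping point rather than a deep one — to be handling the identification $\RR^V_0 \cong (\RR^V)/\langle \sum \mathbf{e}_v\rangle$ carefully so that ``unimodular with respect to the lattice $\ZZ^V \cap \RR^V_0$'' matches ``$\pm 1$ determinant after deleting one coordinate row''; one must check that the lattice $\ZZ^V \cap \RR^V_0$ is exactly the one generated by the $\mathbf{e}_v - \mathbf{e}_w$, which holds since $G$ is connected, and that projecting away one coordinate is a lattice isomorphism onto $\ZZ^{V \setminus \{v_0\}}$. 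Once that normalization is fixed, the spanning-tree characterization is immediate from total unimodularity of the graph incidence matrix.
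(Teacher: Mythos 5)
Your proposal is correct. Note that the paper does not provide its own argument for this statement --- it simply cites \cite[Corollary 3.2]{higashitani2019arithmetic} --- so there is no in-text proof to compare against; what you give is a reasonable self-contained derivation. The route you take is the natural one: identify the vertices of $F$ with the directed edges of $G_F$ (which is the remark immediately following Theorem~\ref{thm:facets}), cone over $\mathbf{0}$ to reduce to the question of when $\{\mathbf{e}_{v_i}-\mathbf{e}_{u_i}\}_{i=1}^{|V|-1}$ is a lattice basis of $\ZZ^V\cap\RR^V_0$, and invoke total unimodularity of the signed incidence matrix plus the standard fact that $|V|-1$ columns of it are independent precisely when the corresponding edges form a spanning tree. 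The one phrase I would tidy is ``after translating by one of the vertices, linearly independent set of vectors $\mathbf{e}_v-\mathbf{e}_u$'': translating $|V|-1$ points by one of them leaves $|V|-2$ difference vectors, which is not the set you actually analyze; what you really use (and correctly state in the next paragraph) is that coning by $\mathbf{0}$ turns the question into linear independence and determinant $\pm 1$ of the $|V|-1$ edge vectors themselves, which is cleaner and matches the lattice-distance-one observation the paper records just before the corollary. The normalization points you flag --- that $\ZZ^V\cap\RR^V_0$ is generated by the edge differences when $G$ is connected, and that deleting a coordinate row is a lattice isomorphism onto $\ZZ^{V\setminus\{v_0\}}$ --- are exactly the bookkeeping one must do, and they go through.
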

\begin{example}[continues = exm:K22]
We have four possible simplices in the facet corresponding to Figure \ref{Fig:K22}. Indeed, removing any edge from $G_F$ gives us a spanning tree. This corresponds to the fact that if we remove one point of the square facet $F$ the convex hull of the other vertices is a simplex. 
\end{example}
The previous example shows how to detect all of the simplices of a facet. This is of course not a triangulation. For instance, among the four possible simplices that are convex hulls of $3$-element subsets of the vertices of a square we have two possible choices of pairs of simplices that triangulate the square. A very powerful tool to obtain such triangulations is the theory of Gr\"obner bases, which we describe next.

\subsection{Gr\"obner bases and triangulations}\label{ssec:GB}
Toric geometry studies relations among finite configurations of lattice points and special algebraic varieties. The main idea is to identify a point $\bf{a}\in\ZZ^n$ with a Laurent monomial ${\bf{x}}^{\bf{a}}:=x_1^{a_1}\cdots x_n^{a_n}$. Then a subset of points $\mathcal{S}\subset\ZZ^n$ is identified with the closure of the image of the map given by the corresponding $m:=|\mathcal{S}|$ many monomials. This closure is, by definition, a toric variety. Its defining ideal $I_\mathcal{S}\subset \CC[y_1,\dots,y_m]$ is generated by binomials, i.e.~polynomials of the type ${\bf{y}}^{\bf{a}}-{\bf{y}}^{\bf{b}}$ for some ${\bf{a}},{\bf{b}}\in\ZZ_{\geq 0}^m$. A case of particular interest is when $\mathcal{S}$ is the set of lattice points in a polytope with vertices contained in $\{1\}\times\ZZ^n$.
There are now many textbooks devoted to toric geometry and its interplay with discrete convex geometry \cite{bruns2009polytopes,cox2011toric,fulton1993introduction,Stks}. The reader may also find a short introduction in \cite{MTor} and \cite[Chapter 8]{ourbook}.
\begin{example}[label = exm:toricquadric]
Let $\mathcal{S}=\{(1,0,0),(1,1,0),(1,0,1),(1,1,1)\}\subset \{1\} \times \ZZ^2$ contain all lattice points of the unit square. The associated toric variety is the closure of the image of the map:
\[(\mathbb C^*)^3 \to \mathbb C^4,\quad (t,x_1,x_2)\mapsto (t,tx_1,tx_2,tx_1x_2).\]
Here the domain is the algebraic torus $(\CC^*)^3$, as some of the monomials in general may have negative exponents. The ideal $I_\mathcal{S}\subset\CC[y_1,\dots,y_4]$ is generated by $y_1y_4-y_2y_3$.
\end{example}
Toric geometry offers both combinatorial tools to study toric varieties and algebraic tools to study lattice polytopes. Here we focus on the latter connection and outline applications of Gr\"obner bases to the study of triangulations of polytopes, and our main reference is \cite{Stks}. Let $\prec$ be a term order on monomials in $\CC[{\bf y}]$. For any $f\in\CC[{\bf y}]$ the largest monomial appearing in $f$ with a nonzero coefficient is called the \emph{initial term} of $f$ and is denoted by $\texttt{in}_\prec(f)$.
For any ideal $I\subset\CC[{\bf y}]$, the initial ideal of $I$ is $\texttt{in}_\prec(I):=\langle\texttt{in}_\prec(f):f\in I\rangle$, the ideal generated by all initial terms of elements of $I$. By definition, this is always a monomial ideal. We note that in general it is not enough to take initial terms of generators of $I$ in order to obtain the generators of the initial ideal $\texttt{in}_\prec(I)$.
\begin{definition}[Gr\"obner basis]
A finite set $G$ of generators of $I$ is called a \emph{Gr\"obner basis} if the initial terms of $G$ generate $\texttt{in}_\prec(I)$.
\end{definition}
For any ideal, Gr\"obner bases exist and may be computed, e.g.~by using the Buchberger algorithm \cite[2.7]{cox1997ideals}. 
\begin{example}[continues = exm:toricquadric]
 First consider the lexicographic term order $y_4\prec y_3\prec y_2\prec y_1$. Then the initial ideal of $I=(y_1y_4-y_2y_3)$ is $\texttt{in}_\prec(I)=(y_1y_4)$.

If we keep a lexicographic term order, but change the order of the variables so that $y_1\prec y_2\prec y_4\prec y_3$, then the initial ideal of $I$ is $\texttt{in}_\prec(I)=(y_2y_3)$. 

In fact, for any other term order the initial ideal will be one of the two described above.
\end{example}
In what follows our ideals will always be homogeneous and our term orders compatible with respect to the degree, i.e.~monomials of higher degree will be larger in the term order than those of smaller degree. One of the most often used term orders is the degree reverse lexicographic order \emph{degrevlex}, see \cite[p.\ 8]{ourbook}.

\begin{definition}
From now we will take $\mathcal{S}$ to be the set of lattice points of a polytope $P$ with vertices in $\{1\}\times\ZZ^n$, and we will call $I_P$ the associated ideal.
\end{definition}

Algebraically, the initial ideal is the best monomial approximation of the starting ideal sharing a lot of important invariants, such as the dimension, the degree and the Hilbert polynomial. Geometrically, the associated variety is a flat deformation of the original toric variety to a (possibly nonreduced) union of coordinate subspaces. Combinatorially,  $\texttt{in}_\prec(I_P)$ is a triangulation of the starting polytope. 

We explain this last statement in detail. Recall that the variables $y_i$ correspond to lattice points of $P$. Hence, (the radical of) a monomial in the $y_i$'s corresponds to a subset of lattice points in $P$. We define a family $\Delta$ of sets of lattice points in $P$ as follows: a set $Q$ of lattice points belongs to $\Delta$ if and only if the product of the variables $y_i$ corresponding to points in $Q$ does not belong to the radical of $\texttt{in}_\prec(I)$. As $\texttt{in}_\prec(I)$ is an ideal, the family $\Delta$ is closed under taking subsets, i.e.~is a simplicial complex. We refer to elements of $\Delta$ as faces. We note that the minimal nonfaces of $\Delta$ correspond to generators of $ \rad (\texttt{in}_\prec(I))$. 
\begin{example}[continues = exm:toricquadric]
In the considered case $y_1,y_4$ correspond to two diagonal points in the square and $y_2,y_3$ correspond to the other two diagonal points. We focus on the case $\texttt{in}_\prec(I)=(y_1y_4)$. If we take all lattice points in the square, the corresponding monomial is $y_1y_2y_3y_4$ and belongs to the initial ideal. Hence, the full set of lattice points of the square does not belong to $\Delta$. Next we focus on the four three-element subsets. Clearly, two of them are not in $\Delta$, namely those corresponding to $y_1y_4y_2$ and $y_1y_4y_3$. However, we obtain two (maximal) faces of $\Delta$ corresponding to $y_1y_2y_3$ and $y_2y_3y_4$. One can check that all other faces of $\Delta$ are subsets of these two maximal ones. We also see that the diagonal $(1,4)$ is the (unique) minimal nonface.
\end{example}
Our next aim is to relate a geometric realization of $\Delta$ to a triangulation of $P$.
\begin{theorem}[{\cite{Stks}, \cite[Theorem 13.25]{ourbook}}]
Using the notation introduced above, $\Delta$ is a triangulation of $P$.  The minimal nonfaces of $\Delta$ correspond to generators of \normalfont{$\rad \texttt{in}_\prec(I)$}.
\end{theorem}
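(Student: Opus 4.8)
The plan is to follow the classical route of \cite[Chapter 8]{Stks}: represent the term order $\prec$ by a weight vector and then identify $\Delta$ with the regular triangulation this weight vector induces on $P$. Write $\mathcal{S}=\{a_1,\dots,a_m\}\subseteq\{1\}\times\ZZ^n$ for the lattice points of $P$, so that $\conv(\mathcal{S})=P$ and $I_P$ is homogeneous for the $\ZZ^{n+1}$-grading with $\deg y_i=a_i$. First I would use the standard fact that, $I_P$ being finitely generated, there is an integer vector $\omega\in\ZZ^m_{>0}$ with $\texttt{in}_\prec(I_P)=\texttt{in}_\omega(I_P)$; since this initial ideal is a monomial ideal, $\omega$ lies in the interior of a maximal cone of the Gr\"obner fan of $I_P$, and, the Gr\"obner fan of $I_P$ being a refinement of the secondary fan of $\mathcal{S}$, $\omega$ then also lies in the interior of a maximal cone of the secondary fan. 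Consequently the regular subdivision $\Delta_\omega$ of $P$ --- obtained by lifting each $a_i$ to height $\omega_i$, taking the lower faces of $\conv\{(a_i,\omega_i):1\le i\le m\}$, and projecting them back to $\RR^n$ --- is a triangulation of $P$ into lattice simplices with vertices among the $a_i$. The theorem then reduces to the following dictionary: for $\sigma\subseteq\{1,\dots,m\}$ one has $\sigma\in\Delta$, i.e.\ $\prod_{i\in\sigma}y_i\notin\rad\texttt{in}_\omega(I_P)$, if and only if $\{a_i:i\in\sigma\}$ is the vertex set of a face of $\Delta_\omega$. Granting this, $\Delta$ is a triangulation of $P$ because $\Delta_\omega$ is one, and the minimal nonfaces of $\Delta$ are precisely the squarefree minimal generators of $\rad\texttt{in}_\omega(I_P)=\rad\texttt{in}_\prec(I_P)$, which gives the second assertion.

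I would prove the two implications of the dictionary separately. For ``non-face $\Rightarrow$ in the radical'': if $\{a_i:i\in\sigma\}$ is not a face of $\Delta_\omega$, there is no affine function $\phi$ with $\phi(a_i)\le\omega_i$ for all $i$ and $\phi(a_i)=\omega_i$ for all $i\in\sigma$, since such a $\phi$ would be a supporting hyperplane of the lifted configuration exhibiting the lifts of $\sigma$ as part of a lower face. Linear programming duality turns this infeasibility into nonnegative integer exponent vectors $\mathbf{u},\mathbf{v}$ with $\mathrm{supp}(\mathbf{v})\subseteq\sigma$, $\sum_i u_i a_i=\sum_i v_i a_i$, and $\omega\cdot\mathbf{v}>\omega\cdot\mathbf{u}$; then $\mathbf{y}^{\mathbf{v}}-\mathbf{y}^{\mathbf{u}}\in I_P$ has $\omega$-initial term $\mathbf{y}^{\mathbf{v}}$, whence $\prod_{i\in\sigma}y_i\in\rad\texttt{in}_\omega(I_P)$. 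For ``face $\Rightarrow$ not in the radical'': if $\{a_i:i\in\sigma\}$ is contained in the vertex set $\nu$ of a maximal cell of $\Delta_\omega$, let $\langle c,x\rangle+h=\mu$ be the supporting hyperplane of the corresponding lower facet and set $\omega'_i:=\langle c,a_i\rangle+\omega_i-\mu$, so that $\omega'\ge 0$ and $\omega'_i=0$ exactly for $i\in\nu$ --- here one uses that, $\texttt{in}_\omega(I_P)$ being monomial, $\omega$ is generic enough that no lattice point other than the vertices of the cell lies on that lower facet. As $I_P$ is $\ZZ^{n+1}$-graded, $\omega'$ and $\omega$ differ by a grading functional, so $\texttt{in}_{\omega'}(I_P)=\texttt{in}_\omega(I_P)$; moreover any monomial supported on $\sigma\subseteq\nu$ is the \emph{unique} $\omega'$-minimal monomial in its multidegree, since two such would differ by a nonzero element of the toric ideal of the affinely independent configuration $\{a_i:i\in\nu\}$, which is the zero ideal. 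A monomial that is the unique minimal-weight one in its multidegree cannot appear in any $\omega'$-initial form of an element of $I_P$, so no power of $\prod_{i\in\sigma}y_i$ lies in $\texttt{in}_\omega(I_P)$, i.e.\ $\sigma\in\Delta$.

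The step I expect to be the crux --- and that I would take the most care over --- is precisely this dictionary, i.e.\ the identification of the monomial ideal $\texttt{in}_\prec(I_P)$ with the Stanley--Reisner ideal of the regular triangulation $\Delta_\omega$. The delicate interplay is between the genericity of $\omega$ forced by the initial ideal being monomial, the resulting simpliciality (and tightness) of the cells of $\Delta_\omega$, and the supports of $\omega$-leading terms of elements of $I_P$; pinning this down is where one leans on the structure theory of initial ideals of toric ideals as in \cite[Chapter 8]{Stks}. Once the dictionary holds, both conclusions --- that $\Delta$ triangulates $P$ and that its minimal nonfaces read off the generators of $\rad\texttt{in}_\prec(I_P)$ --- follow at once.
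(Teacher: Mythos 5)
The paper does not give its own proof here: the statement is cited as \cite{Stks} and \cite[Theorem 13.25]{ourbook} and used as a black box, so I will measure your argument against the standard one in those references. Your strategy --- realize $\prec$ by a weight vector $\omega$ with $\texttt{in}_\omega(I_P)=\texttt{in}_\prec(I_P)$, identify $\Delta$ with the regular subdivision $\Delta_\omega$, and prove the dictionary ``$\sigma\in\Delta$ iff $\{a_i:i\in\sigma\}$ spans a face of $\Delta_\omega$'' by Farkas/LP duality in one direction and a uniqueness-of-minimal-weight-monomial argument (using the $\ZZ^{n+1}$-grading) in the other --- is exactly the line of Sturmfels, Chapter~8 (Theorem~8.3), and both halves of your dictionary are correct as written.

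The step I would not wave through is where you conclude that $\Delta_\omega$ is a triangulation (all cells simplices) by invoking the refinement of the secondary fan by the Gr\"obner fan. In the standard development that refinement is itself derived \emph{from} the dictionary you are in the middle of proving, so as stated this is mildly circular. Two clean repairs: (i) choose $\omega$ not only in the Gr\"obner cone of $\prec$ but generic there, so that no circuit of $\mathcal{S}$ is $\omega$-balanced; then a cell $\nu$ of $\Delta_\omega$ cannot support an affine dependence, since a circuit $c$ with $\mathrm{supp}(c)\subseteq\nu$ would satisfy $\omega\cdot c^+=\omega\cdot c^-$ because all lifted points of $\nu$ lie on one hyperplane. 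Or (ii) prove the dictionary first for the regular \emph{subdivision} $\Delta_\omega$ and deduce simpliciality a posteriori: a dependent cell $\nu$ yields a balanced circuit $c$, hence $\mathbf{y}^{c^+}-\mathbf{y}^{c^-}\in\texttt{in}_\omega(I_P)$; since this ideal is monomial, both $\mathbf{y}^{c^+}$ and $\mathbf{y}^{c^-}$ lie in it, giving $\prod_{i\in\nu}y_i\in\rad\texttt{in}_\omega(I_P)$ and contradicting the dictionary, which makes $\nu$ a face of $\Delta$. With either patch your proof is complete and coincides in substance with the cited source.
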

\begin{example}[continues = exm:toricquadric]
We see that $\Delta$ represents one of the possible triangulations of the square. If we change the term order so that $ \texttt{in}_\prec(I)=(y_2y_3)$ we obtain the other triangulation.
\end{example}

Not every triangulation of $P$ may be constructed from term orders in the way described above. Those who do are called \emph{regular}.

\begin{remark} Our definition of regular triangulations may be unfamiliar to readers steeped in combinatorial constructions of regular triangulations. For experts, let us describe the connection. It turns out that for a given ideal, any term order $\prec$ may be induced by associating weights to variables \cite[Proposition 1.11]{Stks}. Hence, the choice of $\prec$ corresponds to assigning weights to the lattice points of $P$. These are precisely the weights used to obtain the regular triangulation.
\end{remark}
In this article we will be mostly interested in unimodular triangulations, i.e.~triangulations into simplices of normalized volume one. Fortunately, initial ideals are very good at detecting those.
\begin{theorem}[{\cite[Corollary 8.9]{Stks}}]
The regular triangulation $\Delta$ is unimodular if and only if \normalfont{$\texttt{in}_\prec(I)$} is a radical ideal.
\end{theorem}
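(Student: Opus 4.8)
The plan is to route everything through the degree, equivalently the normalized volume $\vol(P)$, chaining four numbers: $\vol(P)$, $\deg\bigl(\CC[\mathbf{y}]/I_P\bigr)$, $\deg\bigl(\CC[\mathbf{y}]/\texttt{in}_\prec(I_P)\bigr)$ and $\deg\bigl(\CC[\mathbf{y}]/\rad\texttt{in}_\prec(I_P)\bigr)$. The ingredients are: (a) since $\mathcal{S}$ is the full set of lattice points of $P$, the variety defined by $I_P$ is the projective toric variety of $P$, whose degree is $\vol(P)$; (b) passing to an initial ideal is a flat degeneration and hence preserves the Hilbert polynomial, so $\deg\bigl(\CC[\mathbf{y}]/\texttt{in}_\prec(I_P)\bigr)=\deg\bigl(\CC[\mathbf{y}]/I_P\bigr)=\vol(P)$; (c) by the discussion preceding the statement, $\rad\texttt{in}_\prec(I_P)$ is the Stanley--Reisner ideal $I_\Delta$ of the triangulation $\Delta$, and since $\Delta$ triangulates the $d$-dimensional polytope $P$ it is pure, so $\deg\bigl(\CC[\mathbf{y}]/I_\Delta\bigr)$ equals the number $N$ of maximal simplices of $\Delta$; (d) those maximal simplices $\sigma_1,\dots,\sigma_N$ triangulate $P$, so $\vol(P)=\sum_k\vol(\sigma_k)$ with each $\vol(\sigma_k)\geq 1$ and $\vol(\sigma_k)=1$ precisely when $\sigma_k$ is unimodular. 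Together with the inclusion $\texttt{in}_\prec(I_P)\subseteq\rad\texttt{in}_\prec(I_P)$, (a)--(d) yield
\begin{equation*}
\deg\bigl(\CC[\mathbf{y}]/\texttt{in}_\prec(I_P)\bigr)=\vol(P)=\sum_{k=1}^{N}\vol(\sigma_k)\;\geq\;N=\deg\bigl(\CC[\mathbf{y}]/\rad\texttt{in}_\prec(I_P)\bigr).
\end{equation*}
If $\texttt{in}_\prec(I_P)$ is radical, the two ends agree, forcing $\sum_k\vol(\sigma_k)=N$ and hence $\vol(\sigma_k)=1$ for every $k$; this proves one implication.

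For the converse, suppose $\Delta$ is unimodular. The chain now only gives $\deg\bigl(\CC[\mathbf{y}]/\texttt{in}_\prec(I_P)\bigr)=\deg\bigl(\CC[\mathbf{y}]/\rad\texttt{in}_\prec(I_P)\bigr)$, and equality of degrees does \emph{not} by itself force a monomial ideal to equal its radical (e.g.\ $(x^2,xy^2)$ and $\rad(x^2,xy^2)=(x)$ have the same degree). I would upgrade to an equality of ideals via Hilbert functions. Unimodularity of $\Delta$ forces every lattice point of $P$ to be a vertex of $\Delta$ --- a non-vertex lattice point would lie in the relative interior of some $\sigma_k$, which is impossible for a unimodular simplex --- and exhibits $P$ as a normal polytope; hence $\dim_{\CC}\bigl(\CC[\mathbf{y}]/I_P\bigr)_t=|tP\cap L|$, where $L$ is the ambient lattice. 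Sorting the points of $tP\cap L$ according to the unique dilated face $t\cdot\conv(F)$, $F\in\Delta$, containing each of them in its relative interior, and using that the $t$-th dilate of a unimodular $k$-simplex has exactly $\binom{t-1}{k}$ interior lattice points, one gets $\dim_{\CC}\bigl(\CC[\mathbf{y}]/I_P\bigr)_t=\sum_{F\in\Delta}\binom{t-1}{\dim F}$, which is exactly the Hilbert function of $\CC[\mathbf{y}]/I_\Delta$. Since $\texttt{in}_\prec(I_P)\subseteq I_\Delta$ and the two graded quotients now have equal Hilbert functions, the surjection $\CC[\mathbf{y}]/\texttt{in}_\prec(I_P)\twoheadrightarrow\CC[\mathbf{y}]/I_\Delta$ is an isomorphism; thus $\texttt{in}_\prec(I_P)=I_\Delta$ is radical.

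The step I expect to be the main obstacle is exactly this last one: turning ``equal normalized volume'' into ``the initial ideal is its own radical'', i.e.\ ruling out that $\texttt{in}_\prec(I_P)$ differs from $I_\Delta$ in low-dimensional or embedded components invisible to the degree. The Hilbert-function computation above is one way around it; an alternative is to invoke the finer structure of initial ideals of toric ideals --- the primary components of $\texttt{in}_\prec(I_P)$ being indexed by the maximal simplices $\sigma_k$, the component at the monomial prime $\langle y_i:i\notin\sigma_k\rangle$ having multiplicity $\vol(\sigma_k)$, and there being no embedded components --- whence unimodularity makes every primary component prime and $\texttt{in}_\prec(I_P)$ radical. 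Everything else is standard toric and Ehrhart bookkeeping.
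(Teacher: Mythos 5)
The paper quotes this result from Sturmfels' book without reproducing a proof, so there is no in-text argument to compare against; I will assess your argument on its own merits, keeping the cited reference in mind.

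Your proof is correct and well organized. The ``radical $\Rightarrow$ unimodular'' direction via the degree chain $\deg\bigl(\CC[\mathbf{y}]/\texttt{in}_\prec(I_P)\bigr)=\vol(P)=\sum_k\vol(\sigma_k)\geq N=\deg\bigl(\CC[\mathbf{y}]/\rad\texttt{in}_\prec(I_P)\bigr)$ is exactly right. For the converse you correctly recognize that equality of degrees alone cannot force a monomial ideal to equal its radical and you upgrade to a Hilbert-function comparison: a unimodular triangulation forces $P$ to be IDP, so $\dim_\CC\bigl(\CC[\mathbf{y}]/I_P\bigr)_t=|tP\cap L|$, and sorting the lattice points of $tP$ by the open dilated face of $\Delta$ containing them gives $\sum_{\varnothing\neq F\in\Delta}\binom{t-1}{\dim F}$, which is the Hilbert function of $\CC[\mathbf{y}]/I_\Delta$. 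Since $\texttt{in}_\prec(I_P)\subseteq I_\Delta$ and the Hilbert functions agree, the two ideals coincide and $\texttt{in}_\prec(I_P)$ is radical. This is a clean, self-contained proof; it is in the same spirit as, though not literally identical to, the treatment in Sturmfels (whose Chapter 8 organizes the multiplicity bookkeeping via standard pairs rather than an Ehrhart/Hilbert-function count).

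Two small remarks. First, a non-vertex lattice point of $P$ lies in the relative interior of some face $F$ of $\Delta$, not necessarily of a maximal $\sigma_k$; the conclusion that every lattice point of $P$ is a vertex of $\Delta$ still holds, because a unimodular simplex of \emph{any} dimension has no lattice points other than its vertices. Second, in your closing remark on an alternative route via primary decomposition, the claim that $\texttt{in}_\prec(I_P)$ has no embedded primes is not automatic --- initial ideals of toric ideals can in general have embedded components --- so that route would need a separate justification (this is essentially what the standard-pair machinery supplies). Your Hilbert-function argument sidesteps the issue entirely, which is a point in its favor.
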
 
We next describe a construction, based on \cite{higashitani2019arithmetic}, that produces radical initial ideals for any symmetric edge polytope.
In this case the variables associated to lattice points of $\mathcal{P}_G$ are as follows: one variable $z$, corresponding to the point $\bf{0}$, and for each edge $e\in E$ two variables $x_e$ and $y_e$ corresponding to the two orientations of $e$. We fix a degrevlex order $\prec$ with $z\prec x_{e_1}\prec y_{e_1}\prec\dots\prec x_{e_n}\prec y_{e_n}$ for some ordering of the edge set $E=\{e_1,\dots,e_n\}$.

\begin{theorem}\cite[Proposition 3.8]{higashitani2019arithmetic}\label{thm:GB}
Let $G$ be a simple graph. For any oriented edge $e$, we denote by $p_e$ the corresponding variable, i.e.~$p_e=x_e$ or $p_e=y_e$ depending on the orientation. We also set $q_e$ to be equal to the variable with the opposite orientation, i.e.~$\{p_e,q_e\}=\{x_e,y_e\}$.

The following collection of three types of binomials forms a Gr\"obner basis of the toric ideal $I_{P_G}$ with respect to the order $\prec$:
\begin{enumerate}
\item For every $2k$-cycle $C$ of $G$, with fixed orientation, and any $k$-element subset $J$ of edges of $C$ not containing the smallest edge among those of $C$ in the chosen ordering,
 $$\prod_{e\in J}p_e-\prod_{e\in C\setminus J} q_e.$$
\item For every $(2k+1)$-cycle $C$ of $G$, with fixed orientation, and any $(k+1)$-element subset $J$ of edges of $C$, 
$$\prod_{e\in J}p_e-z\prod_{e\in C\setminus J}q_e.$$
\item For any edge $e$ of $G$, $$x_ey_e-z^2 \, .$$
\end{enumerate}
Note that the leading monomial is always chosen to have positive sign.
\end{theorem}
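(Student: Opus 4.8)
The plan is to check Buchberger's criterion, preceded by the two routine verifications that the listed binomials lie in $I_{P_G}$ and have the stated initial terms. Recall that $I_{P_G}$ is the kernel of the homomorphism sending $z\mapsto t$, $x_e\mapsto t\,s_v s_u^{-1}$ and $y_e\mapsto t\,s_u s_v^{-1}$ for $e=uv$ (with $\mathbf{s}=(s_w)_{w\in V}$). Both monomials of a type-(3) binomial map to $t^2$. For a type-(1) or type-(2) binomial attached to a cycle $C$ with a fixed cyclic orientation, the edge vectors $\mathbf{e}_v-\mathbf{e}_u$ of the oriented edges of $C$ sum to $\mathbf{0}$; this forces $\prod_{e\in J}p_e$ and $\prod_{e\in C\setminus J}q_e$ (respectively $z\prod_{e\in C\setminus J}q_e$) to have the same image, the matching $t$-degrees in the odd case being supplied by the extra factor $z$. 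Hence all the listed binomials belong to $I_{P_G}$.

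For the initial terms, note that the term order is degree-compatible and the two monomials of each binomial have equal degree, so only the reverse-lexicographic tie-break matters. For type (3) this gives $x_ey_e\succ z^2$ because $z$ is the $\prec$-smallest variable. For a cycle binomial one checks that the positive-sign monomial $\prod_{e\in J}p_e$ is the larger one; in the even case this is precisely where the hypothesis that $J$ omits the $\prec$-smallest edge of $C$ is used, guaranteeing that a single, consistent choice of leading monomial is made among all $\binom{2k}{k}$ generators coming from one cycle.

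It then remains to show that the S-polynomial of every critical pair reduces to $0$ modulo the family. Two of our leading monomials can share a nontrivial common factor only through a shared edge variable, so there are essentially three kinds of pairs: (a) two cycle binomials whose chosen edge sets overlap (with compatible orientations); (b) a cycle binomial together with a diagonal $x_ey_e-z^2$ for an edge $e$ of the cycle; and (c) two distinct diagonals, whose leading monomials are coprime, so the pair is trivial. In case (b), clearing the common factor yields another binomial of $I_{P_G}$ supported on the edges of $C$ and on $z$, which one reduces using the remaining generators attached to $C$ together with the diagonals. In case (a), the symmetric difference of the two cycles is a closed walk of the appropriate parity that decomposes into cycles of $G$, and the corresponding generators carry the S-polynomial down to $0$. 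I expect this last step --- carrying out the cycle decomposition while tracking orientations and the ``smallest edge'' bookkeeping so that every intermediate reduction stays inside the listed family --- to be the main obstacle; everything else is a direct computation.

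Alternatively, one can bypass the S-pair analysis entirely: the monomials divisible by none of the listed leading terms are exactly $z^a$ times squarefree products $\prod_{e\in\vec{F}}p_e$ over partial edge orientations $\vec{F}$ that contain no forbidden cyclic pattern, and a counting (or direct normal-form) argument identifies these, degree by degree, with the lattice points of the dilates $m\mathcal{P}_G$. Since the family lies in $I_{P_G}$ we have $\langle\,\texttt{in}_\prec(\text{family})\,\rangle\subseteq\texttt{in}_\prec(I_{P_G})$, so the resulting equality of Hilbert functions forces equality of these two monomial ideals, i.e.\ the family is a Gr\"obner basis. This route trades the S-pair bookkeeping for the combinatorial claim that each element of the value semigroup has a unique such representative, which is where the real work would then lie.
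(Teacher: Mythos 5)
The paper itself contains no proof of this statement: it is imported verbatim, with citation, as \cite[Proposition 3.8]{higashitani2019arithmetic}, so there is no in-paper argument to compare your attempt against.

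On its own merits, your proposal handles the routine parts correctly: the membership check via the monomial parametrization $z\mapsto t$, $x_e\mapsto ts_vs_u^{-1}$, $y_e\mapsto ts_us_v^{-1}$ is sound (telescoping around the cycle $C$ with $p_eq_e\mapsto t^2$ gives the claimed relations), and your identification of the initial terms is accurate --- in particular, you correctly see that in degrevlex with $z$ smallest a factor of $z$ demotes a monomial, and that the requirement that $J$ avoid the $\prec$-smallest edge of $C$ is exactly what makes $\prod_{e\in J}p_e$ win the tie-break in the even case. The gap is that you never actually carry out the step that proves the theorem. The Buchberger route is left at ``the symmetric difference decomposes into cycles and the corresponding generators carry the S-polynomial to $0$,'' with no verification that each intermediate reduction again has a leading term divisible by one of the listed ones (and this is nontrivial: orientations, parities, the presence or absence of $z$, and the ``smallest edge'' bookkeeping all interact; the cycle$+$diagonal S-pairs also need an explicit reduction, since clearing $p_e$ from $\prod_{J}p_e$ against $x_ey_e-z^2$ changes both the $z$-degree and the parity of the remaining support). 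The alternative Hilbert-series route is likewise left at naming, not constructing, the bijection between standard monomials and lattice points of dilates. You acknowledge both of these yourself; as written, the core argument is asserted rather than given. To complete the proof you would have to either (i) execute the S-pair reductions case by case (even/even, even/odd, odd/odd overlaps, and cycle/diagonal), or (ii) exhibit the explicit normal-form correspondence --- roughly the content underlying Theorem \ref{thm:facets} and Corollary \ref{cor:simpinfacet} --- and compare Hilbert functions.
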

Observe that the initial ideal from Theorem \ref{thm:GB} is always radical. Further, a monomial $m$ in the variables $x_e,y_e$ belongs to this initial ideal if and only if $zm$ does. Hence, the induced triangulation $\Delta$ is unimodular and a simplex in the boundary belongs to $\Delta$ if and only if its extension by ${\bf 0}$ does. This gives us the following strategy for the computation of the normalized volume when $G$ is connected:
\begin{enumerate}
\item Determine the facets of $\mathcal{P}_G$, by Theorem \ref{thm:facets}.
\item For each facet consider the simplices in that facet, corresponding to special spanning trees as in Corollary \ref{cor:simpinfacet}.
\item Count those simplices/spanning trees whose directed edges, represented as a monomial, are not divisible by any of the leading terms in Theorem \ref{thm:GB}.
\end{enumerate}
\begin{example}[continues = exm:K22]
We have fixed a facet $F$ of a three dimensional polytope $\mathcal{P}_G$ for $G=K_{2,2}$. The graph $G_F$ has four edges going from one side of the bipartite graph to the other as represented in Figure \ref{Fig:K22}. The four edges form a (nonoriented) cycle. Ordering the edges $1\prec 2\prec 3\prec 4$, we obtain $ \texttt{in}_\prec(I)=(y_2y_3)$. Theorem \ref{thm:GB}(1) gives us a Gr\"obner basis element $y_2y_3-y_1y_4$, where the set $J$ equals $\{2,3\}$. Hence, we must count spanning trees that do not contain both of the edges in $J$. Clearly, there are two of them. These correspond to the two triangles $\{1,2,4\}$ and $\{1,3,4\}$ in the square. Hence, the normalized volume of this facet equals two.
\end{example}
\begin{figure}[h!]
\includegraphics[scale=0.2]{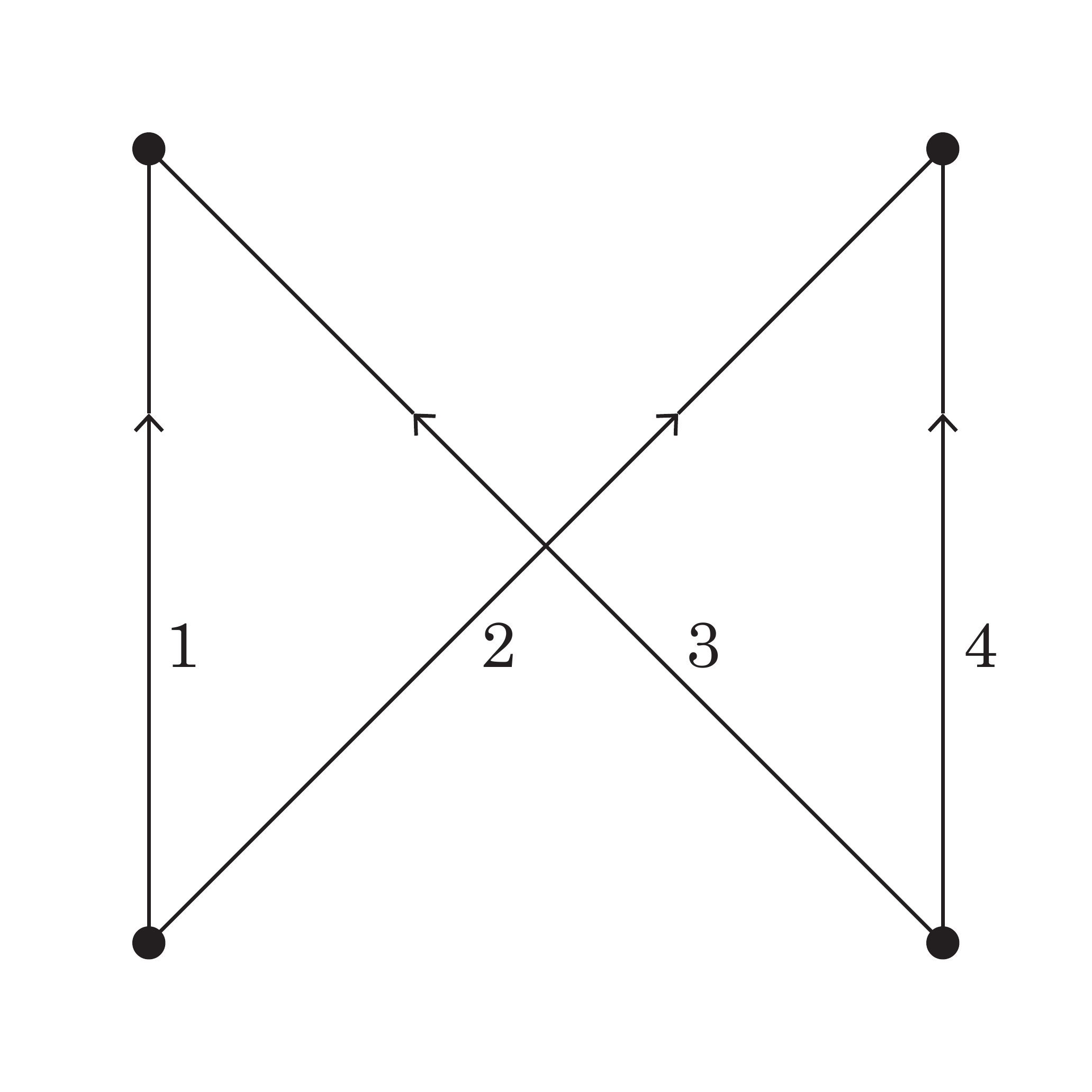}\caption{Orientation of the edges of the graph $K_{2,2}$ corresponding to one facet of $\mathcal{P}_{K_{2,2}}$\,.}\label{Fig:K22}
\end{figure}

\subsection{Ehrhart polynomials and power series}\label{ssec:Ehr}
Let $P$ be a lattice polytope, i.e.~a polytope with vertices in $\ZZ^n$. A celebrated theorem of Ehrhart \cite{ehrhartRational} states that the function $E_P:\ZZ_{\geq 0}\rightarrow \ZZ$ defined by $E_P(j)=|jP\cap\ZZ^n|$ is a polynomial, known as the \emph{Ehrhart polynomial}. A closely related concept is the \emph{Hilbert-Ehrhart series} defined by:
$$\HS_P(t):=\sum_{j=0}^\infty E_P(j)t^j.$$
If $P$ is full-dimensional, this is a rational function of the form $$\HS_P(t)=h^*(t)/(1+t)^{n+1}.$$ Here $h^*(t)=\sum h^{\ast}_jt^j $ is a polynomial of degree at most $n$, known as the \emph{$h^*$-polynomial}. It can be also defined by the formula
\[
E_P (j)=h_0 ^\ast {j+n\choose n}+h_1 ^\ast {j+n-1\choose n} +\cdots + h_n^\ast {j\choose n} \, .
\]
The $h^*$-polynomial encodes many properties of the polytope $P$. For example $P$ is reflexive if and only if $h^*$ is palindromic and of degree $n$, as proved by Hibi in \cite{HibiReflexive}. 

The notion of {\em $f$-vector} can be defined for simplicial complexes, just as in \S\ref{ssec:faces} but setting $f_i$ to be the number of $i$-dimensional simplices. The \emph{$h$-polynomial} of a polytope or a simplicial complex is $h(t):= \sum_{j=0}^{n+1} h_{j}t^j$ where the coefficients are given via the following relation with the $f$-polynomial given in \eqref{eq:fpoly}:
\[f(t)=\sum _{i=0}^{n+1} h_i t^i(1+t)^{n+1-i}. \]
The $h$- and $h^*$-polynomials should not be confused, however they are very much related. The $h$-polynomial of a unimodular triangulation of the lattice polytope $P$ is the $h^*$-polynomial of this polytope. When $P$ is reflexive, this is also the $h$-polynomial of the unimodular triangulation of the boundary. As symmetric edge polytopes have unimodular boundary triangulations, the study of their $h^*$-polynomials is in particular the study of $h$-polynomials of special sphere triangulations. Many intriguing conjectures about triangulations of spheres are still unsolved. We want to mention here the Charney--Davis conjecture and two of its possible strengthenings, due to Gal and Nevo--Petersen: to this end, we need some more definitions.

An $n$-dimensional simplicial complex $\Delta$ is said to be
\begin{itemize}
\item \emph{flag} if its Stanley--Reisner ideal is generated in degree two, i.e.~all minimal nonfaces of $\Delta$ are edges;
\item \emph{balanced} if its vertices can be partitioned into $n+1$ color classes in such a way that no vertices belonging to the same face share the same color.
\end{itemize}
If a simplicial complex $\Delta$ triangulates an $n$-dimensional sphere, it is known that its $h$-polynomial is palindromic of degree $n+1$. As a consequence, it can be expressed as \[h(t)=\sum_{i=0}^{\lfloor\frac{n+1}{2}\rfloor}\gamma_i t^i(1+t)^{n+1-2i},\] where $(\gamma_0, \gamma_1, \ldots, \gamma_{\lfloor\frac{n+1}{2}\rfloor})$ is the \emph{$\gamma$-vector} of $\Delta$. Just like the $h$-polynomial is better suited than the $f$-polynomial to highlight some properties of a simplicial complex, it turns out that the $\gamma$-vector plays a similarly important role when dealing with flag triangulations of spheres.

We can now state the promised conjectures, in increasing order of strength. We remark that all of these are usually stated for a more general class of simplicial complexes, namely flag \emph{generalized homology spheres} (also known as flag \emph{Gorenstein*} complexes).

\begin{con}[{Charney--Davis \cite[Conjecture D, equivalent form on p.~135]{charney1995euler}}] \label{conj:charneydavis}
Let $\Delta$ be a flag triangulation of a $(2d-1)$-dimensional sphere. Then $(-1)^d h(-1) \geq 0$. Equivalently, the last entry $\gamma_d$ of the $\gamma$-vector of $\Delta$ is nonnegative.
\end{con}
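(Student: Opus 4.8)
\noindent The plan is to first dispose of the asserted equivalence and then to organise the inequality according to how much structure is available. Because \(\Delta\) triangulates a sphere, its \(h\)-polynomial is palindromic of degree \(n+1=2d\), so the \(\gamma\)-expansion \(h(t)=\sum_{i=0}^{d}\gamma_i\,t^i(1+t)^{2d-2i}\) is available. Evaluating at \(t=-1\) annihilates every summand with \(i<d\), since each carries a positive power of \((1+t)\), and leaves \(h(-1)=(-1)^d\gamma_d\). Hence \((-1)^dh(-1)=\gamma_d\), the two formulations are literally identical, and the content of the conjecture is the single inequality \(\gamma_d\ge 0\) for flag triangulations of odd spheres.

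For that inequality no uniform argument is known, so any realistic plan proceeds case by case. The base case \(d=1\) is elementary: a flag triangulation of \(S^1\) is a cycle on \(f_0\ge 4\) vertices, and \(\gamma_1=f_0-4\ge 0\). For \(d=2\) I would take the classical Charney--Davis case \cite{charney1995euler} as known, or re-derive it by a double-counting argument over the \(2\)-faces that uses flagness of their links. Past \(d=2\) the general statement is open, and the route that matches the rest of this paper is to renounce universality and instead prove \(\gamma_d\ge 0\) --- preferably the stronger Nevo--Petersen \(\gamma\)-nonnegativity --- for the flag boundary triangulations that actually arise from symmetric edge polytopes. Concretely: determine for which connected graphs \(G\) the triangulation of \(\partial\mathcal{P}_G\) induced by Theorem \ref{thm:GB} is flag, which forces the leading terms of the cycle relations to be quadratic and thereby bounds the long cycles of \(G\); then, for the admissible families, compute the \(h^\ast\)-polynomial of \(\mathcal{P}_G\) --- equivalently the \(h\)-polynomial of that boundary triangulation --- and read off \(\gamma_d\).

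The enabling step, and the one I expect to be the real work, is computing \(h^\ast(\mathcal{P}_G)\) for infinite families and tracking how it transforms under elementary graph operations --- gluing along a vertex or an edge, adding a pendant edge, subdividing an edge, taking suspensions --- since each such move should induce a predictable transformation of \(h^\ast\), after which a short induction carries \(\gamma\)-nonnegativity from small base graphs up these families. The genuine obstacle, which this plan does not overcome, is precisely the step that cannot be localised: \(\gamma_d\ge 0\) for an \emph{arbitrary} flag odd sphere, with no graph-theoretic or toric scaffolding, remains open and appears to require a new idea --- either a purely combinatorial certificate (an inequality among flag \(f\)-numbers that forces it) or a geometric one (a Hopf-type curvature argument) valid with no extra hypotheses.
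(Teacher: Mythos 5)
This is a \emph{conjecture}, not a theorem: the paper states it (with attribution to Charney--Davis) precisely because it is open, and offers no proof; there is therefore nothing in the paper to compare against. Your proposal recognizes this correctly. The only provable content in the statement is the parenthetical equivalence between $(-1)^d h(-1)\ge 0$ and $\gamma_d\ge 0$, and your one-line verification of that — substituting $t=-1$ into $h(t)=\sum_{i=0}^{d}\gamma_i t^i(1+t)^{2d-2i}$ and observing that all terms with $i<d$ vanish, leaving $h(-1)=(-1)^d\gamma_d$ — is exactly right. Your subsequent discussion (base case $d=1$, the known $d=2$ case, and the open status in general, together with the observation that the paper's actual contribution is to verify the stronger Nevo--Petersen conjecture for particular graph families via Corollaries \ref{cor:nevo} and following) is an accurate assessment of the situation. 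In short: no gap, because there was never a proof to reproduce; you correctly established the equivalence and correctly refused to claim a proof of the open inequality.
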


\begin{con}[{Gal \cite[Conjecture 2.1.7]{gal2005real}}] \label{conj:gal}
Let $\Delta$ be a flag triangulation of a sphere. Then all the entries of the $\gamma$-vector of $\Delta$ are nonnegative.
\end{con}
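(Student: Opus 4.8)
Since Conjecture~\ref{conj:gal} is a flag strengthening of the lower-bound part of the $g$-theorem (the Charney--Davis inequality \cite{charney1995euler} is already \emph{not} implied by unimodality of $h$ once flagness is dropped, as the boundary of a triangle shows, where $\gamma=(1,-1)$), any attack must use flagness in an essential way, and I see three entry points. \emph{Real-rootedness}: a palindromic polynomial with only real roots has a nonnegative $\gamma$-vector, so it would suffice to prove that $h(\Delta;t)$ is real-rooted for every flag triangulation of a sphere; I would not pursue this, as real-rootedness of $h$ is itself wide open (and conceivably false) already for flag polytopes, hence at least as hard as the conjecture. \emph{Gal's subdivision method} \cite{gal2005real}: $\gamma$-nonnegativity is preserved under flag-preserving edge subdivisions and holds for the boundary of the cross-polytope, so it would be enough to show that every flag triangulation of a sphere is reachable from a cross-polytope boundary by such moves, or more generally by flag-preserving bistellar flips with $\gamma\ge 0$ checked invariant under each flip; the obstruction here is the absence of any flag analogue of bistellar connectivity.

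The route I would actually take is a \emph{vertex-deletion recursion} aimed at the stronger Nevo--Petersen statement \cite{nevo2011gamma}. Induct on dimension. The base cases $\dim\Delta\le 1$ are immediate: a flag triangulation of $S^1$ is an $n$-cycle with $n\ge 4$, with $\gamma=(1,n-4)\ge 0$. For the inductive step, fix a vertex $v$ of a flag $(d-1)$-sphere $\Delta$; its antistar $A:=\mathrm{ast}_\Delta(v)$ is a flag $(d-1)$-ball with boundary $\partial A=\mathrm{lk}_\Delta(v)$, a flag $(d-2)$-sphere, so $\gamma(\mathrm{lk}_\Delta(v))\ge 0$ by the inductive hypothesis. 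Counting faces according to whether they contain $v$ gives the Stanley-type identity $h(\Delta;t)=h(\mathrm{ast}_\Delta(v);t)+t\,h(\mathrm{lk}_\Delta(v);t)$, which rewrites the $\gamma$-polynomial of $\Delta$ in terms of $h(A;t)$ and $\gamma(\mathrm{lk}_\Delta(v))$. One then introduces the relative $\gamma$-vector of the flag ball $A$ (relative to $\partial A$) and reduces the conjecture to: (a) nonnegativity of this relative $\gamma$-vector for all flag balls, together with (b) the inductive hypothesis on links. The most ambitious form of the plan would settle (a) by constructing explicitly, via a ``flag shelling''-type induction that adjoins the facets of $A$ one at a time, a flag simplicial complex whose $f$-vector is $\gamma(\Delta)$, thereby proving the Nevo--Petersen refinement along the way.

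The crux --- and the step I expect to be genuinely out of reach --- is controlling the relative $\gamma$-vector of an \emph{arbitrary} flag ball from its local (flag) combinatorics. Through the Stanley--Reisner dictionary this amounts to a ``flag Hard Lefschetz'' phenomenon: a Lefschetz element on $\CC[\Delta]/(\theta)$ whose interaction with the quadratic generators of the Stanley--Reisner ideal forces $\gamma\ge 0$. Adiprasito's Hard Lefschetz theorem for general simplicial spheres yields the $g$-theorem but not this flag refinement, and no algebraic mechanism producing the extra positivity is presently known; likewise, the shelling-based construction of a $\gamma$-complex is known only in sporadic families and has no general template. I therefore expect that a complete proof would require a genuinely new input --- either a flag bistellar connectivity theorem feeding the second route, or a Lefschetz-type theorem tailored to flag (e.g.\ Koszul) Stanley--Reisner rings feeding the third --- and the verifications one can carry out in concrete families, such as the symmetric edge polytopes studied in this paper, should be regarded as evidence for, rather than a step toward, the conjecture in full generality.
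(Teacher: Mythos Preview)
The statement you were asked to prove is \emph{not} a theorem in the paper: it is Gal's conjecture, stated there (as Conjecture~\ref{conj:gal}) precisely because it is an open problem. The paper offers no proof and does not claim one; it merely records the conjecture alongside the Charney--Davis and Nevo--Petersen conjectures as motivation for studying $\gamma$-vectors of symmetric edge polytopes, and then verifies the Nevo--Petersen conjecture in some special cases (Corollary~\ref{cor:nevo} and the corollary following it).

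Your proposal is therefore not comparable to any argument in the paper, and you seem to be aware of this: what you have written is a thoughtful survey of possible strategies (real-rootedness, Gal's subdivision moves, a vertex-deletion/relative-$\gamma$ recursion) together with an honest assessment that each runs into a known obstruction. That assessment is essentially correct --- Gal's conjecture remains open in general, and the approaches you sketch are indeed the standard ones, each blocked where you say it is. Two small corrections: the boundary of a triangle is a $3$-cycle, which is \emph{not} flag (the minimal nonface has three vertices), so it does not serve as a counterexample to anything about flag complexes; and your claimed identity $h(\Delta;t)=h(\mathrm{ast}_\Delta(v);t)+t\,h(\mathrm{lk}_\Delta(v);t)$ is not the correct decomposition (the standard relation involves $h(\Delta\setminus v)$ and a shift, and one must be careful about the dimensions in which the $h$-polynomials are taken). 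But since you are not claiming a proof, these do not affect your overall conclusion that the conjecture is out of reach by current methods.
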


\begin{con}[{Nevo--Petersen \cite[Conjecture 6.3]{nevo2011gamma}}] \label{conj:nevopetersen}
Let $\Delta$ be a flag triangulation of a sphere. Then the $\gamma$-vector of $\Delta$ is the $f$-vector of a balanced simplicial complex.
\end{con}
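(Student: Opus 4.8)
To my knowledge Conjecture~\ref{conj:nevopetersen} is open in full generality, so what follows is a strategy together with an honest account of where it breaks. The natural plan is an induction over flag triangulations of spheres that carries along, at every stage, an explicit \emph{balanced witness}: a balanced simplicial complex $\Sigma(\Delta)$ with $f(\Sigma(\Delta))=\gamma(\Delta)$. The induction needs three ingredients: a base case; a supply of flag-preserving local moves that connects an arbitrary flag sphere to the base case; and, for each such move, a matching operation on the witness that preserves balancedness and reproduces the effect of the move on the $\gamma$-vector.

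\emph{Base case and reductions.} Take the boundary of the $(n+1)$-dimensional cross-polytope: it is a flag triangulation of $S^{n}$ with $h$-polynomial $(1+t)^{n+1}$, hence $\gamma$-vector $(1,0,\dots,0)$, which is the $f$-vector of the complex with no vertices, so the witness is trivial. Next, suspending a flag sphere by two new vertices again yields a flag sphere (the unique new minimal nonface is the pair of new vertices), multiplies the $h$-polynomial by $(1+t)$, and therefore leaves the $\gamma$-vector unchanged; so one may strip off all ``suspension factors'' and run the induction on whatever complexity measure (e.g.\ number of vertices) remains.

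\emph{Inductive step.} Here one should exploit the two local operations that are known to interact well with flagness: flag-preserving edge subdivisions and flag-preserving bistellar flips. For subdivisions the right bookkeeping device is a \emph{local} analogue of the $\gamma$-vector whose pieces are supported on the faces being subdivided: if $\Delta'$ refines $\Delta$ then $\gamma(\Delta')=\gamma(\Delta)+\sum(\text{local contributions})$, and the goal is to prove that each local contribution is itself the $f$-vector of a small explicit balanced complex and that these assemble — via joins and careful recoloring — into a balanced complex realizing $\gamma(\Delta')$. For bistellar flips one would similarly track how $\gamma$ changes and glue or delete a matching balanced gadget onto the witness. The partial results in the literature (barycentric subdivisions, and more general flag subdivisions) are exactly the instances where this program can be carried out by hand.

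\emph{Main obstacle.} The serious gap is the connectivity ingredient: it is \emph{not} known that every flag triangulation of a sphere can be reached from a cross-polytope boundary — or from any finite list of base spheres — by flag-preserving local moves. Ordinary bistellar-flip connectivity destroys flagness, and Nevo's flag subdivisions are not evidently reversible, so in dimension $\ge 4$ the induction simply has no scaffold. Absent such a flag-connectivity theorem, the only route left is a family-by-family verification, and this is what the present paper does for the classes of graphs built by the basic operations of Section~\ref{sec:res}: there the Gr\"obner-basis machinery of Section~\ref{ssec:GB} yields closed-form or recursive expressions for the $h^*$-polynomial of $\mathcal P_G$, hence (in the cases where the relevant boundary triangulation is flag) for $\gamma$, after which one exhibits the required balanced complex directly. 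A genuinely general proof would instead need a construction of the witness $\Sigma(\Delta)$ read off from the intrinsic combinatorics of $\Delta$ — something unavailable even for the weaker Gal conjecture, which only asks $\gamma(\Delta)\ge 0$.
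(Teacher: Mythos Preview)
The statement you are addressing is a \emph{conjecture}, not a theorem: the paper does not prove it, and it has no ``paper's own proof'' for me to compare against. You correctly recognize this at the outset. What the paper actually does with Conjecture~\ref{conj:nevopetersen} is confirm it in a restricted family of cases: Corollary~\ref{cor:nevo} shows that if a bipartite connected graph $G_2$ is glued along an edge to a connected graph $G_1$, and both symmetric edge polytopes admit unimodular flag triangulations whose $\gamma$-vectors already have balanced witnesses, then any unimodular flag boundary triangulation of $\mathcal P_G$ does as well. The mechanism is purely algebraic: Theorem~\ref{thm:hjoin} gives $h^*_G=h^*_{G_1}h^*_{G_2}/(1+t)$, so by Corollary~\ref{gamma_concat} the $\gamma$-vector of the join is the convolution of the two input $\gamma$-vectors, which is the $f$-vector of the simplicial join $\Sigma(G_1)*\Sigma(G_2)$ of the two witnesses, and joins of balanced complexes are balanced (color sets taken disjoint).

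Your proposed strategy is of a different nature: a hypothetical induction over \emph{all} flag spheres via flag-preserving local moves, carrying a balanced witness at each step. You identify the obstruction honestly and precisely --- flag-preserving bistellar connectivity is not known --- and this is indeed the genuine gap that prevents your sketch from being a proof. There is nothing mathematically wrong in what you wrote, but it is a survey of the state of the problem rather than a proof, and it does not overlap with the paper's method, which never attempts the general conjecture and instead exploits the specific multiplicative structure of $h^*$-polynomials under edge-gluing of graphs.
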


\begin{remark}
At first sight, it is not clear why Conjectures \ref{conj:gal} and \ref{conj:nevopetersen} are stated for spheres of any dimension, in contrast to Conjecture \ref{conj:charneydavis} which deals only with odd-dimensional spheres. Indeed, for any triangulation of a $2d$-dimensional sphere one has that $-1$ is a root of the $h$-polynomial, so the na\"ive version of Conjecture \ref{conj:charneydavis} for even-dimensional spheres is trivially true. However, Gal and Januszkiewicz \cite[Theorem]{gal2010even} note that the general version of the Charney--Davis conjecture is equivalent to asking that, for every $2d$-dimensional flag generalized homology sphere, $(-1)^d\tilde{h}(-1) \geq 0$ holds, where $\tilde{h}(t) := \frac{h(t)}{1+t}$.
\end{remark}

\subsection{Recursions and word counting}\label{ssec:words}
As we will see in Section \ref{ssec:wheel}, some computations about special symmetric edge polytopes will boil down to combinatorial questions about words. In particular, given a finite alphabet and a finite set of forbidden words, we will be interested in counting (cyclic) words that do not contain any of these forbidden words as a subword.

\begin{exm}[label = running_example]
Let us consider the alphabet consisting of three letters $+$, $0$ and $-$. How many words of length $\ell$ avoid the subwords $+-$, $-+$ and $000$? If $\ell=3$, there are 16 such words:
\[\begin{split}&{+}{+}{+}, {+}{+}{0}, {+}{0}{+}, {+}{0}{0}, {\boldsymbol{+}}{\boldsymbol{0}}{\boldsymbol{-}}, {0}{+}{+}, {0}{+}{0}, {0}{0}{+},\\ &{-}{-}{-}, {-}{-}{0}, {-}{0}{-}, {-}{0}{0}, {\boldsymbol{-}}{\boldsymbol{0}}{\boldsymbol{+}}, {0}{-}{-}, {0}{-}{0}, {0}{0}{-}.\end{split}\]
Now imagine that any such word is written on a strip of paper. What happens if we bring together the two endpoints of the strip, so that the last letter of the word is adjacent to the first? Then the number of eligible words drops to 14, since ${+}{0}{-}$ and ${-}{0}{+}$ (in boldface above) are not acceptable anymore.
\end{exm}

Such a problem can be tackled via the \emph{Goulden--Jackson cluster method}, see for instance \cite{GouldenJackson}. We will now describe this procedure (following the exposition by Noonan and Zeilberger \cite{NoonanZeilberger}) and its extension to cyclic words due to Edlin and Zeilberger \cite{EdlinZeilberger}.

\subsubsection{The Goulden--Jackson cluster method}
Fix an alphabet over $k$ letters and a set $B$ of $n$ bad words. We assume without loss of generality that no element of $B$ is a proper subword of another element in $B$. Our goal is to compute the generating function $f(s) := \sum_{i=0}^{+\infty}c_is^i$, where $c_i$ is the number of $i$-letter words that do not contain any element of $B$ as a subword. It turns out that this formal series can be expressed as a rational function in $s$.

Given any $\ell$-letter word $w = w_1w_2 \cdots w_{\ell}$, we define its \emph{weight} to be $\textrm{weight}(w):=s^{\ell}$, its \emph{head} to be the set of all its proper prefixes $\{w_1, w_1w_2, w_1w_2w_3, \ldots, w_1w_2\cdots w_{\ell-1}\}$ and its \emph{tail} to be the set of all its proper suffixes $\{w_2w_3\cdots w_{\ell}, w_3w_4\cdots w_{\ell}, \ldots, w_{\ell}\}$. Given two words $u$ and $v$ (in this order), we define their $\emph{overlap}$ to be the intersection of the tail of $u$ with the head of $v$. Moreover, whenever the overlap of $u$ and $v$ is nonempty, we define \[(u : v) := \sum_{x \in \textrm{overlap}(u, v)}\textrm{weight}\left(\frac{v}{x}\right),\] where $\frac{v}{x}$ denotes the subword of $v$ obtained by erasing the prefix $x$.

For each $v \in B$ we want to compute $L_v$, a certain rational function in $s$. These rational functions are found by solving the linear system in $|B|$ equations and $|B|$ unknowns
\[L_v = {-} \textrm{weight}(v) - \sum_{\substack{u \in B \\ \textrm{overlap}(u, v) \neq \varnothing}}(u : v) \cdot L_u\,.\]

Finally, let $L = \sum_{v \in B}L_v$\,. Then
\begin{theorem}[{\cite{NoonanZeilberger}}]
With the above notation, the rational generating function $f(s)$ satisfies 
$$f(s)=\frac{1}{1-ks-L}\,.$$
\end{theorem}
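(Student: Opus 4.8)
The plan is to set up a weighted count over all words (not just the good ones) that keeps track of the ``marked'' occurrences of bad words, and then to extract the good-word count by an inclusion–exclusion that has been packaged into the cluster generating function. First I would introduce, for each word $w$ over the alphabet, a decoration consisting of a choice of a set of occurrences of bad words inside $w$ that are marked; call a pair (word, chosen set of marked bad occurrences) a \emph{marked word}, and give it weight $(-1)^{(\text{number of marked occurrences})}s^{|w|}$. Summing the weights of all marked words whose underlying word is a fixed $w$ gives, by the binomial theorem, $s^{|w|}$ if $w$ contains no bad subword and $0$ otherwise; hence the total weight of all marked words equals exactly $f(s)=\sum_i c_i s^i$. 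So it suffices to compute that total weight by a different organizing principle.

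Next I would decompose a marked word into maximal \emph{clusters} (maximal runs of overlapping marked bad occurrences) separated and surrounded by stretches of ``free'' letters that carry no marks. A marked word is then a sequence alternating free letters and clusters, in any order, so the total weight factors as a geometric-type series: $f(s) = \sum_{m\ge 0}(ks + C(s))^m = \dfrac{1}{1-ks-C(s)}$, where $ks$ is the weight of a single free letter and $C(s)$ is the total weight of all clusters. Thus the whole theorem reduces to identifying the cluster generating function $C(s)$ with the quantity $L=\sum_{v\in B}L_v$ defined by the linear system in the statement.

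For that identification I would classify clusters by the \emph{last} marked bad word $v$ occurring in them and let $L_v$ be the total weight of all clusters ending in $v$. A cluster ending in $v$ is either just $v$ itself (contributing $-\mathrm{weight}(v)$, the sign coming from the single mark) or a shorter cluster ending in some $u\in B$ to which a copy of $v$ is appended so that $v$ overlaps the previous bad word $u$ in a nonempty overlap $x$; appending $v$ adds the letters of $v/x$ and one new mark, contributing the factor $-\mathrm{weight}(v/x)$, and summing over all admissible overlaps $x$ gives the factor $-(u:v)$ in front of $L_u$. This yields precisely the recursion $L_v = -\mathrm{weight}(v) - \sum_{u}(u:v)L_u$, so $C(s)=\sum_{v\in B}L_v=L$ and the formula follows.

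The main obstacle is making the cluster decomposition rigorous: one must check that every marked word factors \emph{uniquely} as an alternating concatenation of free letters and clusters, that distinct marked occurrences within a cluster are correctly accounted for (in particular that the weight of a cluster depends only on the sequence of bad words used and their consecutive overlaps, not on the larger context), and that the overlap bookkeeping via the sets $\mathrm{tail}(u)\cap\mathrm{head}(v)$ is exactly right — here the hypothesis that no bad word is a proper subword of another is what guarantees that consecutive marked bad words in a cluster genuinely overlap at an endpoint rather than one sitting inside the other. Once these combinatorial facts are nailed down, the algebra reducing everything to $1/(1-ks-L)$ is a routine geometric-series manipulation, and the solvability of the linear system for the $L_v$ follows because the matrix $I + ((u:v))_{u,v}$ is a power series in $s$ equal to the identity at $s=0$, hence invertible over $\mathbb{Q}[[s]]$ (and in fact over $\mathbb{Q}(s)$).
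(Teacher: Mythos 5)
The paper states this theorem purely as a citation to Noonan--Zeilberger and does not give a proof, so there is no in-paper argument to compare against. Your sketch is the standard cluster-method proof from that reference, and it is correct: the signed ``marked word'' weight $(-1)^{\#\text{marks}}s^{|w|}$ collapses by inclusion--exclusion to the indicator of good words, a marked word factors uniquely into free letters and maximal clusters giving the geometric series $1/(1-ks-C(s))$, and peeling off the last marked bad occurrence of a cluster yields exactly the stated linear system, so $C(s)=L$. One small precision worth adding when you formalize: a cluster is best defined not as a bare set of overlapping occurrences but as a word together with an \emph{ordered} sequence of marked occurrences (ordered by starting position) in which consecutive occurrences overlap; this is what makes the ``remove the last bad word'' step a bijection and is exactly where the no-proper-subword hypothesis is used. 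You correctly flag the remaining points to check (unique factorization, overlap bookkeeping, invertibility of $I-A$ over $\mathbb{Q}[[s]]$), and these are routine; the proposal is sound.
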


\begin{exm}[continues = running_example]
Consider the alphabet $\{+, 0, -\}$ and the set of bad words $B = \{+-, -+, 000\}$. This gives rise to the linear system
\[
\begin{cases}
L_{{+}{-}} = {-} \textrm{weight}({+}{-}) - ({-}{+} : {+}{-}) \cdot L_{{-}{+}}\\
L_{{-}{+}} = {-} \textrm{weight}({-}{+}) - ({+}{-} : {-}{+}) \cdot L_{{+}{-}}\\
L_{{0}{0}{0}} = {-} \textrm{weight}({0}{0}{0}) - ({0}{0}{0} : {0}{0}{0}) \cdot L_{{0}{0}{0}}
\end{cases}
\]
which becomes
\[
\begin{cases}
L_{{+}{-}} = {-} s^2 - s \cdot L_{{-}{+}}\\
L_{{-}{+}} = {-} s^2 - s \cdot L_{{+}{-}}\\
L_{{0}{0}{0}} = {-} s^3 - (s+s^2) \cdot L_{{0}{0}{0}}
\end{cases}
\]
and hence gives $L_{{+}{-}} = L_{{-}{+}} = {-}\frac{s^2}{1+s}$ and $L_{{0}{0}{0}} = {-}\frac{s^3}{1+s+s^2}$\,. Thus, \[L = L_{{+}{-}} + L_{{-}{+}} + L_{{0}{0}{0}} = \frac{-2s^2-3s^3-3s^4}{(1+s)(1+s+s^2)}\] and \[f(s) = \frac{1}{1-3s-L} = \frac{-s^3-2s^2-2s-1}{2s^3+2s^2+s-1} = 1+3s+7s^2+\mathbf{16s^3}+36s^4+82s^5+\ldots\] 
As already computed, there are indeed 16 3-letter words satisfying the constraints.
\end{exm}

\subsubsection{The cyclic case}
We now want to address the case when words are \emph{cyclic}, i.e.~the last letter of the word is adjacent to the first (but we still remember where the word starts). As explained in Example \ref{running_example}, this introduces some new constraints that need to be taken care of. Let us introduce some notation in view of this.

Fix an ordering $b_1, \ldots, b_n$ for the bad words in $B$ and define an $n\times n$ matrix $A = (a_{ij})_{i,j=1}^{n}$ by

\[a_{ij} := 
\begin{cases}
{-}(b_i : b_j) & \textrm{if } \textrm{overlap}(b_i, b_j) \neq \varnothing \\
0 & \textrm{otherwise.}
\end{cases}
\]

Define then another $n$ by $n$ matrix $M = (m_{ij})_{i,j=1}^{n}$ by
\[M := A \cdot (I_n-A)^{-1} \cdot s \cdot \frac{dA}{ds},\]
where $I_n$ is the $n$ by $n$ identity matrix and both multiplication by $s$ and differentiation are taken entrywise.

Finally, given any formal power series $\sum_{i=0}^{+\infty}c_i s^i$ and any integer $r > 0$, we set
\[\textrm{chop}_{r}\left(\sum_{i=0}^{+\infty}c_i s^i\right) := \sum_{i=r}^{+\infty}c_i s^i.\]

We are now ready to state the main result from \cite{EdlinZeilberger}.
\begin{theorem}[{\cite[Theorem]{EdlinZeilberger}}]
With the above notation, the rational generating function for the cyclic case equals
\[\frac{1+s\frac{dL}{ds}-L}{1-ks-L} + \sum_{i=1}^{n}\emph{\textrm{chop}}_{\ell_i}(m_{ii}) ,\]
where $\ell_i$ is the length of the $i$-th bad word $b_i$.
\end{theorem}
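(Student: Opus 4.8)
The plan is to establish this by the cyclic version of the Goulden--Jackson cluster method, following \cite{EdlinZeilberger}. For a circular word $w$ of length $\ell$ (whose first letter we remember), let $\mathrm{Occ}(w)$ be the set of occurrences of bad words read around the circle, with the convention that a bad word of length $m$ occurs only when $m\le\ell$. Then $[\,w\text{ avoids }B\text{ cyclically}\,]=\sum_{S\subseteq\mathrm{Occ}(w)}(-1)^{|S|}$, so the generating function to be computed is $\mathcal C(s)=\sum_{(w,S)}(-1)^{|S|}s^{|w|}$, the sum running over pairs of a circular word $w$ and a set $S$ of cyclic occurrences in it. As in the linear case, the overlap/connectedness relation partitions $S$ into \emph{clusters}, each occupying a contiguous arc of positions, and the arcs of distinct clusters are pairwise disjoint.

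\textbf{Cutting at the seam.} Call the adjacency between the last and the first letter of $w$ the \emph{seam}; because the clusters occupy disjoint arcs, at most one of them contains the seam in the interior of its arc. If none does, cutting $w$ open at the seam turns every cyclic occurrence in $S$ into an ordinary linear one and leaves the clusters untouched, which sets up a bijection with linear marked words; by the linear theorem this case contributes exactly $f(s)=\frac1{1-ks-L}$. If one cluster $C$ straddles the seam, then position $1$ lies inside the arc of $C$, and --- outside the degenerate case in which $C$ wraps entirely around the circle, which we absorb into the correction below --- position $1$ is not the first position of that arc; reading $w$ from position $1$ then exhibits $w$ as (a nonempty suffix of $C$)$\cdot$(a linear marked word)$\cdot$(a nonempty prefix of $C$), the two fragments of $C$ reassembling around the seam into $C$ itself together with its seam-crossing occurrences. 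Summing over the choice of $C$ and of the splitting position, and noting that $s\frac{d}{ds}-1$ is the generating-function incarnation of choosing a splitting position other than the first one in a cluster enumerated by $L$, this case contributes $\bigl(s\frac{dL}{ds}-L\bigr)f$. Adding the two contributions produces the first summand $\frac{1+s\frac{dL}{ds}-L}{1-ks-L}$.

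\textbf{The degeneracy correction.} The term $\bigl(s\frac{dL}{ds}-L\bigr)f$ was obtained treating a seam-straddling cluster as an honest linear chain of bad words fitting comfortably on the circle. This fails when the chain is long compared with $\ell$: its two ends then overlap on the far side of the circle, so that $C$ is really a \emph{cyclic} cluster --- a closed walk in the overlap graph --- and, worse, one may be counting occurrences of bad words too long to fit on the circle at all. These are precisely the configurations recorded by the diagonal of $M=A(I_n-A)^{-1}s\frac{dA}{ds}$: the factor $A$ is one overlap step, $(I_n-A)^{-1}=I_n+A+A^2+\cdots$ is a chain of arbitrarily many further bad words, and $s\frac{dA}{ds}$ simultaneously closes the chain into a cycle and bookkeeps where the remembered starting position falls inside the seam-crossing bad word; the entry $m_{ii}$ collects the cyclic clusters whose seam-crossing word is $b_i$. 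Since such a cluster must occupy at least $\ell_i$ positions, the terms of $m_{ii}$ of degree below $\ell_i$ describe bad words that cannot fit and are deleted by $\mathrm{chop}_{\ell_i}$. One then checks --- a direct computation that is already illuminating for a single self-overlapping bad word --- that $\sum_{i}\mathrm{chop}_{\ell_i}(m_{ii})$ carries exactly the signs and magnitudes needed to repair the over- and under-counting in $\bigl(s\frac{dL}{ds}-L\bigr)f$, and assembling the three contributions gives the claimed identity.

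The crux, and the step I expect to be hardest, is this last one: one must prove that the discrepancy between the naive ``linear-chain'' count $\bigl(s\frac{dL}{ds}-L\bigr)f$ of seam-straddling clusters and the correct count is, cluster by cluster and splitting-position by splitting-position, exactly $\sum_i\mathrm{chop}_{\ell_i}(m_{ii})$. This calls for a sign-reversing matching between the spurious terms produced by $\bigl(s\frac{dL}{ds}-L\bigr)f$ and the entries of $M$, hinging on reading $s\frac{d}{ds}$ as the operation of marking a position inside the seam-crossing bad word, together with a careful treatment of the boundary cases in which a cluster wraps entirely around the circle. Small instances --- for example $B=\{aa\}$ over a one-letter alphabet, where the formula correctly returns $\mathcal C(s)=1+s$ --- are helpful for fixing the conventions and signs.
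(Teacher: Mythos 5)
The paper does not prove this statement: it is quoted directly from Edlin and Zeilberger, as the citation tag on the theorem indicates, so there is no in-paper argument to compare your attempt against; your proposal must stand on its own.

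Your overall skeleton is the right one and does mirror the Edlin--Zeilberger strategy: decompose by whether a cluster straddles the seam, observe that the seamless case is in bijection with linear marked words and contributes $f=\tfrac{1}{1-ks-L}$, and observe that a seam-straddling cluster of span $m$ admits $m-1$ positions for the remembered starting letter, which is exactly what $s\tfrac{d}{ds}-1$ applied to $L$ encodes, giving the $\bigl(s\tfrac{dL}{ds}-L\bigr)f$ term. But the proof has a genuine gap, and it is the one you flag yourself. The sentence beginning ``One then checks\ldots'' is where the entire content of the correction term lives: you need to prove, not assert, that the over- and under-counting created by treating a seam-straddling cluster as a linear chain is repaired \emph{exactly} by $\sum_i\mathrm{chop}_{\ell_i}(m_{ii})$. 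This requires (i) a precise combinatorial interpretation of the entries of $M=A(I_n-A)^{-1}s\tfrac{dA}{ds}$ as signed generating functions of cyclic clusters with a marked position in a distinguished seam-crossing bad word, (ii) a verification that the signs coming from $(-1)^{|S|}$ and from the minus signs inside $A$ match up, and (iii) a proof that the terms of $m_{ii}$ of degree $<\ell_i$ are precisely the spurious configurations, so that $\mathrm{chop}_{\ell_i}$ removes exactly them and nothing else. You also absorb the ``cluster wraps entirely around the circle'' case into the correction without analyzing it. None of these steps is carried out; the passage is a description of what a proof would have to do, not a proof. As written, the proposal is an outline whose crux is left as a claim.
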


\begin{exm}[continues = running_example]
One checks that
\[A = 
\bordermatrix{
& {+}{-} & {-}{+} & {0}{0}{0} \cr
{+}{-} & 0 & -s & 0 \cr
{-}{+} & -s & 0 & 0 \cr
{0}{0}{0} & 0 & 0 & -s-s^2
} \quad \textrm{and}
\quad
M = 
\bordermatrix{
& {+}{-} & {-}{+} & {0}{0}{0} \cr
{+}{-} & \frac{s^2}{1-s^2} & \frac{-s^3}{1-s^2} & 0 \cr
{-}{+} & \frac{-s^3}{1-s^2} & \frac{s^2}{1-s^2} & 0 \cr
{0}{0}{0} & 0 & 0 & \frac{s^2+3s^3+2s^4}{1+s+s^2}
}.
\]
Since $\textrm{chop}_2(\frac{s^2}{1-s^2}) = \frac{s^2}{1-s^2}$ and $\textrm{chop}_3(\frac{s^2+3s^3+2s^4}{1+s+s^2}) =  \frac{s^2+3s^3+2s^4}{1+s+s^2} - s^2 =  \frac{2s^3+s^4}{1+s+s^2},$ one has that the generating function is
\[\begin{split}
&\frac{1+s\frac{dL}{ds}-L}{1-3s-L} + \sum_{i=1}^{3}{\textrm{chop}}_{\ell_i}(m_{ii}) = {-}3 + s + s^2 + \frac{-2s^2-6s+4}{2s^4-s^2-2s+1}\\
&= 1 + 3s + 7s^2 + \mathbf{14s^3} + 26s^4 + 62s^5 + 138s^6 + 310s^7 + \ldots
\end{split}\]
As expected, we find 14 3-letter cyclic words satisfying the constraints.
\end{exm}

\settocdepth{subsection}
\section{Results and applications}\label{sec:res}
In this section we show how combining methods from different fields can help us to obtain explicit information about symmetric edge polytopes. Some of the
cases of graphs were already studied, e.g.~trees, cycles, complete bipartite graphs and complete graphs \cite{higashitani2019arithmetic, MHNOH, higashitani2017interlacing, ohsugi2012smooth}. In this section we assume familiarity with the basics of graph theory, and point the reader to Diestel's book \cite{Diestel} as a reference for any undefined terminology.

Our main motivation comes from questions asked by Robert Davis and Tianran Chen:  
``\emph{(...) our current focus is on wheel graphs as well as graphs formed by gluing cycles to cycles or complete graphs to complete graphs.  Any additional information on the normalized volume or facet count will give us important root count information for the Kuramoto model. Description of all the facets will tell us how the subnetworks are formed. Eventually, we will need regular triangulations to construct homotopy algorithms}.''\cite{mailDavis}

On the other hand, Vershik asks to
``\emph{study and classify finite metric spaces according to combinatorial properties of their fundamental polytopes}''\cite[\S{} 1]{vershik2015classification}. Here we focus on  classes of metric spaces that are relevant in the context of computational phylogenetics and whose associated graph can be generated by gluing even cycles and trees (i.e., ``circular split-decomposable metrics'').

\bigskip

We start by presenting the $f$-vector in case of even cycles in Section \ref{ssec:2kcyc}. Cycles played a central role in \cite{chen2018toric}: there, a facet description was particularly important, 
however no general formula for the $f$-vector was given. The symmetric edge polytopes associated with cycles were also studied in depth in \cite{ohsugi2012smooth}. In the case of odd cycles, they were used to disprove two conjectures about the locus of roots of the Ehrhart polynomials of smooth Fano polytopes. 

After this warm-up, in Section \ref{ssec:joins} we show  how the polytopes and their invariants change under various graph-theoretic constructions.
The enumeratively quite challenging treatment of wheel graphs is presented in Section \ref{ssec:wheel}. 
In Section \ref{ssec:splits} we describe fundamental polytopes of circular split-decomposable metrics, as a sample contribution to Vershik's program of combinatorial classification of metric spaces in the special context of metric spaces with relevance in phylogenetics.

Finally, narrowing our focus to bipartite graphs, in Section \ref{sec:flows} we draw a connection with the theory of integer flows on graphs. Using Beck and Zaslavsky's ``inside-out'' approach to Ehrhart theory, we obtain a formula that relates the number of integer points in polar duals of symmetric edge polytopes to the number of facets in the (primal) symmetric edge polytopes of bipartite contractions of the graph. This is a contribution to the ongoing study of the number of integer points in primal-dual pairs of reflexive polytopes \cite{A3,Nill}. In particular, for classes of graphs where one can give explicit expressions for the number of facets (such as those studied in \S \ref{ssec:2kcyc} and \S \ref{ssec:splits}), this formula allows for explicit computation of the number of integer points.

\subsection{Even cycles}\label{ssec:2kcyc}
Let $C_{2k}$ be the even cycle on $2k$ vertices, $k>1$. We will investigate the properties of $\mathcal{P}_{C_{2k}}$. Let us fix a global orientation so that each edge $\{i \pmod{2k}, \ i+1 \pmod{2k}\}$ starts from $i \pmod{2k}$ and ends in $i+1 \pmod{2k}$. Edges oriented according to this orientation will be called \emph{positive} and oriented differently \emph{negative}.
By Theorem \ref{thm:facets} we obtain the following corollary.
\begin{corollary}\label{cor:cyc2k}
The facets of $\mathcal{P}_{C_{2k}}$ are in bijection with integer labelings of the vertices $f\colon V \to \ZZ$ such that:
\begin{itemize}
\item a fixed vertex is labeled by zero;
\item consecutive vertices have labels that differ exactly by one.
\end{itemize}
The polytope $\mathcal{P}_{C_{2k}}$ has $\binom{2k}{k}$ facets. Each facet of $\mathcal{P}_{C_{2k}}$ is $(2k-2)$-dimensional and contains $2k$ vertices, i.e.~(oriented) edges of $C_{2k}$.
\end{corollary}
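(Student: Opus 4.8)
The plan is to deduce everything from Theorem~\ref{thm:facets}, applied to the very rigid combinatorial structure of a cycle. First I would unwind condition (ii) of Theorem~\ref{thm:facets} in the case $G=C_{2k}$: the subgraph $E_f$ must be spanning and connected. Since $C_{2k}$ itself has only $2k$ edges and removing two or more of them disconnects it, the only connected spanning subgraph of $C_{2k}$ is $C_{2k}$ itself (removing a single edge leaves a path, which is still connected and spanning, so in fact $E_f$ may be all of $C_{2k}$ or $C_{2k}$ minus one edge --- wait, one must check: a path on $2k$ vertices is connected and spanning, so $|E_f|\in\{2k-1,2k\}$). Combined with condition (i), which forces $|f(u)-f(v)|\le 1$ on every edge, this says $f$ changes by exactly $\pm 1$ along every edge, except possibly one edge where it may stay constant --- but if it stays constant on one edge of the cycle while changing by $\pm 1$ on the other $2k-1$ edges, the total change around the cycle is odd, contradicting that it must return to its starting value. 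Hence $f$ must change by exactly $\pm 1$ along \emph{every} edge, i.e.\ $E_f=E(C_{2k})$, which is the first bullet of Corollary~\ref{cor:cyc2k} once we normalize by fixing one vertex to have label $0$ (legitimate because facet-defining functions are only defined up to an additive constant, as explained before Theorem~\ref{thm:facets}).

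Next I would count such labelings. Writing the cycle as $v_0,v_1,\dots,v_{2k-1},v_0$ with $f(v_0)=0$, the data of $f$ is exactly a choice of sign $\varepsilon_i\in\{+1,-1\}$ for each edge $\{v_i,v_{i+1}\}$ ($i=0,\dots,2k-1$, indices mod $2k$) subject to the single closure constraint $\sum_{i=0}^{2k-1}\varepsilon_i=0$, i.e.\ exactly $k$ of the edges are ``up'' and $k$ are ``down''. The number of such sign sequences is $\binom{2k}{k}$, and distinct sign sequences give distinct labelings $f$ and hence distinct facets (by the last sentence before Notation~\ref{not:G_F}, the facet determines, and is determined by, which oriented edges $(u,v)$ satisfy $f(v)-f(u)=1$). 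This gives the facet count $\binom{2k}{k}$.

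Finally, for the dimension and vertex count of a single facet $F$ corresponding to $f$: by the remark after Theorem~\ref{thm:facets}, the vertices of $\mathcal{P}_{C_{2k}}$ lying on $F$ are the $\mathbf{e}_v-\mathbf{e}_u$ with $f(v)-f(u)=1$, i.e.\ precisely one of the two orientations of each of the $2k$ edges --- so $F$ has exactly $2k$ vertices. For the dimension, $\mathcal{P}_{C_{2k}}$ is full-dimensional in the hyperplane $\RR^V_0$ of dimension $2k-1$ (since $C_{2k}$ is connected, per the discussion in \S\ref{ssec:faces}), so every facet has dimension $2k-2$. I would record the cleaner argument that the $2k$ vectors $\{\mathbf{e}_{v_{i+1}}-\mathbf{e}_{v_i} : \varepsilon_i=+1\}\cup\{\mathbf{e}_{v_i}-\mathbf{e}_{v_{i+1}} : \varepsilon_i=-1\}$ span a $(2k-2)$-dimensional space: they sum to zero (their signed sum telescopes around the cycle), and any $2k-1$ of them are independent in $\RR^V_0$ because the corresponding edges form a spanning tree (a path) of $C_{2k}$, whose $\mathbf{e}_v-\mathbf{e}_w$ vectors are well known to be a basis of $\RR^V_0$. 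The one genuinely delicate point, which I expect to be the main obstacle to write cleanly, is the parity argument ruling out $|E_f|=2k-1$: one has to phrase carefully that an alternating-to-$\pm1$ step function on a cycle with a single ``flat'' edge cannot close up, hence $E_f$ is forced to be the whole cycle; everything else is then bookkeeping.
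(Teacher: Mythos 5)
Your argument is correct and follows the same route as the paper; in fact you are more careful than the paper's own two-line proof, which asserts that one ``increases or decreases by exactly one'' without explicitly ruling out a constant edge (the paper handles that point only later, for general bipartite graphs, in Lemma~\ref{lem:nonzero} -- whose proof is precisely your parity argument specialised to a cycle). One small caveat on the closing ``cleaner argument'' for the dimension: the $2k$ vertex vectors $u_i=\varepsilon_i(\mathbf{e}_{v_{i+1}}-\mathbf{e}_{v_i})$ do \emph{not} sum to zero (if they did, pairing with the facet-defining functional $f$ would give $0=\sum_i\langle f,u_i\rangle=2k$); rather, the \emph{signed} combination $\sum_i\varepsilon_i u_i=\sum_i(\mathbf{e}_{v_{i+1}}-\mathbf{e}_{v_i})$ telescopes to $0$. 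So the $u_i$ linearly span all of the $(2k-1)$-dimensional space $\RR^V_0$, and the facet dimension $2k-2$ comes from the affine span (the vertices lie on the affine hyperplane $\langle f,\cdot\rangle=1$ which avoids the origin). This slip is harmless, since your first argument -- that $\mathcal{P}_{C_{2k}}$ is full-dimensional in $\RR^V_0$ and hence its facets are $(2k-2)$-dimensional -- already settles the point.
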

\begin{proof}
The first statement is exactly the application of Theorem \ref{thm:facets}. To count the facets we proceed as follows. We start by assigning $0$ to the fixed vertex. We follow the cycle, assigning values to vertices, each time either increasing or decreasing the value by exactly one. We obtain a facet precisely when we make exactly $k$ increases and $k$ decreases. In other words, any choice of $k$ edges determines a facet of $\mathcal{P}_{C_{2k}}$ and vice versa. The last statement follows.
\end{proof}

Applying the unimodular triangulation induced by Theorem \ref{thm:GB} one proves that each facet of $\mathcal{P}_{C_{2k}}$ has volume $k$.  

\begin{corollary}[{cf.~\cite[Theorem 2.2]{ohsugi2012smooth}}]
The normalized volume of $\mathcal{P}_{C_{2k}}$ equals $k \cdot \binom{2k}{k}$.
\end{corollary}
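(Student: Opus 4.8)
The plan is to combine the facet enumeration of Corollary~\ref{cor:cyc2k} with the fact, recorded in the line just above the statement, that every facet of $\mathcal{P}_{C_{2k}}$ has normalized volume $k$, and then sum over facets using reflexivity. Since $\mathcal{P}_{C_{2k}}$ is reflexive, the unimodular triangulation of $\partial\mathcal{P}_{C_{2k}}$ induced by Theorem~\ref{thm:GB} extends (by coning each boundary simplex over $\mathbf{0}$, as recalled after Corollary~\ref{cor:simpinfacet}) to a unimodular triangulation $\Delta$ of $\mathcal{P}_{C_{2k}}$, so $\vol(\mathcal{P}_{C_{2k}})$ equals the number of maximal simplices of $\Delta$. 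Being regular, $\Delta$ restricts on each facet to a unimodular triangulation of that facet, and a $(2k-2)$-dimensional simplex of $\Delta$ lies in a unique facet; hence the maximal simplices of $\Delta$ are partitioned according to the facet containing them and
\[
\vol(\mathcal{P}_{C_{2k}}) \;=\; \sum_{F}\vol(F),
\]
the sum ranging over the $\binom{2k}{k}$ facets listed in Corollary~\ref{cor:cyc2k}. Plugging in $\vol(F)=k$ for every facet gives $\vol(\mathcal{P}_{C_{2k}})=k\binom{2k}{k}$.

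It remains to see why $\vol(F)=k$ for a single facet $F=F_f$ — this is the one place where actual work is needed, and what the preceding line of the excerpt asserts. Because $|f(u)-f(v)|=1$ on every edge, the oriented graph $G_F$ of Notation~\ref{not:G_F} is all of $C_{2k}$, each edge oriented in the direction of increasing $f$; by Corollary~\ref{cor:cyc2k}, exactly $k$ of these orientations agree with the reference orientation of Section~\ref{ssec:2kcyc} and $k$ are reversed. By Corollary~\ref{cor:simpinfacet} the unimodular simplices contained in $F$ are the spanning trees of $G_F$, i.e.\ the $2k$ Hamiltonian paths obtained by deleting one edge, and by the three-step recipe after Theorem~\ref{thm:GB} the ones appearing in $\Delta$ are those whose monomial (the product of the $G_F$-orientation variables over the edges of the tree) is divisible by no leading term of the Gr\"obner basis. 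Type-(2) relations are vacuous here ($C_{2k}$ has no odd cycle) and the type-(3) relations $x_ey_e$ cannot divide a squarefree product of distinct edge orientations, so only the type-(1) leading monomials attached to the single cycle $C=C_{2k}$ matter; a direct inspection of these against the degrevlex order shows that exactly $k$ of the $2k$ Hamiltonian-path simplices survive. The case $k=2$ — two of four surviving — is exactly the $\mathcal{P}_{K_{2,2}}=\mathcal{P}_{C_4}$ computation of Example~\ref{exm:K22}.

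The only genuine obstacle, then, is this per-facet inspection: one must verify, uniformly over all $\binom{2k}{k}$ facets and independently of which edge happens to be $\prec$-smallest, that the type-(1) leading monomials of Theorem~\ref{thm:GB} knock out exactly $k$ of the $2k$ spanning trees of $G_F\cong C_{2k}$ — neither more nor fewer. Everything else (the facet count, the reflexive-polytope coning argument, and the concluding multiplication) is already available from Corollary~\ref{cor:cyc2k} and the machinery recalled in Sections~\ref{ssec:faces}--\ref{ssec:GB}.
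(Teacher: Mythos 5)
Your proof is correct and takes essentially the same route as the paper: combine the facet count of Corollary~\ref{cor:cyc2k} with the per-facet volume claim that precedes the corollary, via the coning/unimodular-triangulation argument recalled after Corollary~\ref{cor:simpinfacet}. Your unpacking of why each facet has volume $k$ (only type-(1) leading monomials matter, and after restricting to $G_F$ exactly one of the two candidate leading monomials—coming from the $k$ edges agreeing, or the $k$ disagreeing, with the reference orientation, whichever avoids the $\prec$-smallest edge—survives and kills exactly $k$ of the $2k$ spanning trees) is a correct filling-in of the ``direct inspection'' that the paper also leaves implicit.
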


Our next goal is to describe the face structure of $\mathcal{P}_{C_{2k}}$. 
\begin{proposition}\label{prop:fullFeven}
The poset of faces $\mathscr F(\mathcal P_{C_{2k}})$ is isomorphic to the set of all ordered pairs $(A,B)$ of disjoint subsets of $[2k]$ where either $\vert A \vert =\vert B \vert = k $ or else $\vert A \vert, \vert B \vert <k$, with partial order given by componentwise containment: $(A,B)\leq (A',B')$ if $A\subseteq A'$ and $B\subseteq B'$.

In particular, the $f$-vector of the polytope $\mathcal{P}_{C_{2k}}$ is given by:
\[f_i(\mathcal{P}_{C_{2k}}) = \begin{cases} \binom{2k}{k} & i = 2k-2\\
\sum_{\substack{a+b=i+1\\a<k, b<k}}\binom{2k}{a}\binom{2k-a}{b} & i < 2k-2\end{cases}.\]
\end{proposition}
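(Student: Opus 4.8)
The strategy is to combine the facet description from Theorem~\ref{thm:facets} with the observation that every proper face of $\mathcal{P}_{C_{2k}}$ lies in some facet, and that the faces of a facet are completely transparent because each facet is (combinatorially) a cross-polytope-like object whose vertices are the $2k$ oriented edges of $C_{2k}$. First I would set up the bookkeeping: to each subset $S$ of vertices of $\mathcal{P}_{C_{2k}}$, i.e.\ to each set of oriented edges of $C_{2k}$, associate the pair $(A,B)$ where $A$ is the set of indices of the positively-oriented edges in $S$ and $B$ the set of indices of the negatively-oriented ones; since an edge and its reverse are distinct vertices of the polytope, $A$ and $B$ are disjoint subsets of $[2k]$. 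The content of the proposition is then that $S$ is the vertex set of a face exactly when either $|A|=|B|=k$ (the facet case) or $|A|,|B|<k$, and that containment of faces corresponds to componentwise containment of pairs.

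The key steps, in order, would be: (1) By Corollary~\ref{cor:cyc2k}, the facets correspond to choices of $k$ edges to declare ``increasing'' along the cycle, and the vertices on such a facet are precisely the $2k$ oriented edges determined by the labeling $f$; in the language above, a facet contributes the pair $(A_0, [2k]\setminus A_0)$ with $|A_0|=k$, and its vertices run over \emph{all} $2k$ oriented edges, so the vertex set of the facet $F$ contains one oriented copy of every edge of $C_{2k}$. Wait --- one must be careful: the facet does not contain both orientations of any edge, so actually the facet's vertex set corresponds to a pair $(A,B)$ with $A\sqcup B=[2k]$, $|A|=|B|=k$; this is the top stratum. (2) Show that a subset $S$ with associated pair $(A,B)$, $A\cap B=\varnothing$, is the vertex set of a proper face if and only if it is contained in the vertex set of some facet, i.e.\ iff there is a $k$-subset $A_0\subseteq[2k]$ with $A\subseteq A_0$ and $B\subseteq [2k]\setminus A_0$. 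Such an $A_0$ exists precisely when $|A|\le k$ and $|B|\le k$ and $A\cap B=\varnothing$, i.e.\ always when $|A|,|B|<k$, and when $|A|=|B|=k$ only for $A_0=A$ (the facet itself). So the face is proper and below the top iff $|A|,|B|<k$. (3) For the converse direction --- every such $S$ really \emph{is} a face, not merely a subset of a facet's vertices --- I would use that a facet of $\mathcal{P}_{C_{2k}}$ is a cross-polytope (more precisely, combinatorially the free join / direct sum structure: the $2k$ oriented edges split into $k$ antipodal-like pairs only after one passes to the facet, so inside the facet every subset of vertices that avoids picking ``conflicting'' data spans a face); concretely, a facet which is a $(2k-2)$-simplex or, in general, the convex hull of $2k$ affinely-positioned points that form the vertex set of a cross-polytope-type polytope, has the property that \emph{every} subset of its vertices of the allowed shape is the vertex set of a face. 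Here one should invoke Corollary~\ref{cor:simpinfacet}: the maximal simplices in a facet correspond to spanning trees of $G_F$, which is the cycle $C_{2k}$ with all its edges, so $G_F$ is the $2k$-cycle and its spanning trees are obtained by deleting a single edge; hence each facet, as a polytope, is the cyclic polytope / the boundary complex dual to a $2k$-cycle, and its face lattice is exactly the lattice of pairs $(A,B)$ with $A\sqcup B$ a proper subset of $[2k]$ --- again matching. (4) Assemble: the face poset is the union of the top stratum (pairs with $|A|=|B|=k$, pairwise incomparable, the facets) and the lower strata (disjoint pairs with both parts $<k$), with componentwise containment; check the empty face is $(\varnothing,\varnothing)$ and the whole polytope $\mathcal{P}_{C_{2k}}$ is the formal top. (5) Finally, extract the $f$-vector: an $i$-dimensional face has $i+1$ vertices, so $|A|+|B|=i+1$; for $i<2k-2$ the count is $\sum_{a+b=i+1,\,a<k,\,b<k}$ of the number of ways to choose disjoint $A,B$ of sizes $a,b$ in $[2k]$, namely $\binom{2k}{a}\binom{2k-a}{b}$; for $i=2k-2$ it is the number of facets, $\binom{2k}{k}$.

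\textbf{Main obstacle.} The routine part is the counting in step~(5) and the facet count in step~(1), which is already done. The genuine content --- and the step I expect to require the most care --- is step~(3): proving that every pair $(A,B)$ with $A\cap B=\varnothing$ and $|A|,|B|<k$ \emph{actually arises} as a face, i.e.\ that the obvious candidate subset of vertices is affinely independent and cut out by a supporting hyperplane, rather than merely sitting inside a facet. The clean way is to show directly that given disjoint $A,B\subseteq[2k]$ with $|A|,|B|\le k$ one can build a labeling $f\colon V\to\ZZ$ satisfying the conditions of Theorem~\ref{thm:facets} whose $G_F$ contains exactly the oriented edges indexed by $A$ (positively) and $B$ (negatively) among a spanning connected set, and then observe that within the resulting facet the span of those vertices is a face because a subset of the vertices of (a polytope combinatorially equivalent to) a cross-polytope that contains no antipodal pair is always a face --- here the relevant ``antipodal'' obstruction is exactly picking both orientations of one edge, which disjointness of $A$ and $B$ forbids, together with picking a full $k$-subset in one part, which the strict inequalities forbid. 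One should double-check the boundary behaviour: when $|A|=k$ (and hence necessarily $|B|\le k$), the only face with that $A$-part of shape $(A,B)$ with $B$ large is the facet, and the proposition's case split $|A|=|B|=k$ versus $|A|,|B|<k$ correctly excludes the ``mixed'' pairs $|A|=k>|B|$: such a pair is \emph{not} a face on its own but only the vertex set of a proper subset of a facet, and this is precisely the subtlety that must be verified rather than waved away.
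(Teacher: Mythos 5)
The bookkeeping in your step (1) and the facet description in step (2) agree with the paper, and you correctly isolate the subtle point: one must show that every pair $(A,B)$ with $\vert A\vert,\vert B\vert<k$ really is the vertex set of a face, and that ``mixed'' pairs with $\vert A\vert = k > \vert B\vert$ are not. However, your proposed argument for that step rests on a false premise. A facet $F$ of $\mathcal{P}_{C_{2k}}$ has $2k$ vertices but dimension $2k-2$; a cross-polytope with $2k$ vertices is only $k$-dimensional, so for $k\geq 3$ the facet is not a cross-polytope (nor is it a $(2k-2)$-simplex, which would have only $2k-1$ vertices). The facet is combinatorially the direct sum of two $(k-1)$-simplices: its face complex has exactly two minimal non-faces, namely the set of all $k$ positively-oriented edges and the set of all $k$ negatively-oriented ones, each of size $k$ --- not $k$ antipodal pairs of size two as in a cross-polytope. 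Your sentence ``a subset that contains no antipodal pair is always a face'' is therefore the wrong criterion, and you never actually prove the correct face structure of $F$. Moreover the iff in your step (2) (``is a face iff contained in the vertex set of some facet'') is literally false --- the pair $(A,\varnothing)$ with $\vert A\vert = k$ is contained in a facet's vertex set but is not a face --- and your conclusion ``\dots iff $\vert A\vert,\vert B\vert < k$'' does not follow from what you wrote, since containment in a facet holds already when $\vert A\vert,\vert B\vert \leq k$.

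The paper fills the gap by a short double use of a single device. Suppose $H$ is a face of dimension $i<2k-2$ sitting in a facet $F$; choose $i+1$ vertices of $H$, giving disjoint sets $A,B$ of positive and negative edges with $a+b=i+1$. First, if $a=k$ or $b=k$ then the facet containing $H$ is uniquely determined, hence $H=F$, contradicting $i<2k-2$. So $a,b<k$. Second, for any further vertex $e$ of $F$ not among the chosen $i+1$, adjoin the \emph{opposite} orientation of $e$ to the chosen edges (this is possible because $a,b<k$) and extend to a full set of $k$ positive and $k$ negative edges: this yields a facet $F'$ that contains the $i+1$ chosen vertices but not $e$, so $e\notin H$. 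Hence $H$ is a simplex with exactly $i+1$ vertices, and both the poset isomorphism and the $f$-vector formula follow. Your proposal identifies where the work is, but the argument you sketch in its place would not succeed.
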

\begin{proof} The description of the facets was given in Corollary \ref{cor:cyc2k}. Let us consider lower-dimensional faces.

Fix $i < 2k-2$. Every $i$-dimensional face  $H$ must be contained in some facet $F$ represented by $k$ edges oriented in a positive way and $k$ edges in the opposite direction. The face $H$ must contain at least $i+1$ lattice points corresponding to $a$ edges oriented in a positive way and $b$ in the opposite way, where $a+b=i+1$. If either $a=k$ or $b=k$ then $F$ is the
unique facet containing $H$ and hence $H=F$, which is not possible. Thus we may assume $a,b<k$.
However, then $H$ does not contain any other lattice points. Indeed, consider any additional oriented edge $e$ that corresponds to a point in $F$. To the chosen edges representing lattice points of $H$ we may add one more edge, that is $e$ oriented in the opposite direction, obtaining a set of edges $G'$. Then we may further extend $G'$ to a set of $k$ edges oriented in a positive way and $k$ edges in the negative way. Thus, the initial set of $i+1$ points belongs
to a facet that does not contain $e$. We may repeat the argument for any other edge to see that $H$ is a simplex with $i+1$ points. Hence, for $i<2k-2$, every $i$ dimensional face is represented by a choice of $a<k$ positive and $b<k$ negative edges, where $a+b=i+1$.
\end{proof}

\begin{example}[cf.~Example \ref{exm:K22}]\label{ex:cube}
Let $G=C_4=K_{2,2}$. Then $\mathcal{P}_G$ is a three-dimensional polytope, with six facets that are squares, twelve edges and eight vertices. 
Its face poset is isomorphic to that of a cube, but keep in mind that our polytope is reflexive.  
\end{example}

\subsection{Joining graphs}\label{ssec:joins}
The question we address in this section is what can be said about the symmetric edge polytope of a graph obtained by gluing together two connected graphs. Before getting into this, we record here an easy but useful lemma about symmetric edge polytopes associated with connected bipartite graphs.

\begin{lemma}\label{lem:nonzero}
Let $G$ be a connected bipartite graph. Then, any $f\colon V\to \mathbb Z$ that defines a facet of $\mathcal P_G$ via Theorem \ref{thm:facets} satisfies $\vert f(v)-f(w)\vert = 1$ for all adjacent $v,w$.
\end{lemma}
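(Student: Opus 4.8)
The plan is to use the characterization of facet-defining functions from Theorem~\ref{thm:facets} together with the bipartiteness of $G$. Recall that, by that theorem, an integer labeling $f\colon V\to\ZZ$ defines a facet of $\mathcal{P}_G$ exactly when (i) $|f(u)-f(v)|\leq 1$ for every edge $uv$, and (ii) the set $E_f$ of edges realizing $|f(u)-f(v)|=1$ spans and connects $G$. So the lemma amounts to showing that, when $G$ is bipartite and connected, condition (ii) forces $E_f=E$, i.e. there can be no edge $uv$ with $f(u)=f(v)$.

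First I would fix a proper $2$-coloring of $V$, say $V=V_0\sqcup V_1$, with every edge running between $V_0$ and $V_1$. The key observation is a parity constraint: along any walk in $G$, each step changes the color class, and at each step $f$ changes by $0$ or $\pm 1$. I would show that, for a facet-defining $f$, the function $g(v):=f(v)+\epsilon(v)$, where $\epsilon(v)=0$ on $V_0$ and $\epsilon(v)=1$ on $V_1$ (or some similar parity correction), behaves well: more precisely, I claim that if $uv\in E$ with $f(u)=f(v)$ then, because $u$ and $v$ lie in different color classes, one can detect an ``inconsistency.'' The cleanest route: since $E_f$ is spanning and connected by (ii), pick any vertex $v_0$ and define $h(v)$ to be the $f$-value predicted by walking from $v_0$ to $v$ along a path in $E_f$; along such a path $f$ changes by exactly $\pm1$ at each step, so $h(v)\equiv f(v_0)+(\text{distance-parity of }v\text{ from }v_0)\pmod 2$, which by bipartiteness is a well-defined function of the color of $v$.

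Now suppose toward a contradiction that some edge $uv\in E\setminus E_f$ exists, so $f(u)=f(v)$. Since $E_f$ connects $G$, both $u$ and $v$ are reachable from $v_0$ within $E_f$, so $f(u)$ and $f(v)$ have prescribed parities depending only on the colors of $u$ and $v$ respectively. But $u$ and $v$ have different colors (edge of a bipartite graph), hence $f(u)$ and $f(v)$ must have opposite parities, contradicting $f(u)=f(v)$. Therefore no such edge exists and $E_f=E$, i.e. $|f(u)-f(v)|=1$ for all adjacent $u,v$, as claimed.

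The main obstacle — really the only point needing care — is making the parity argument fully rigorous: one must check that the parity of $f(v)$ is genuinely well-defined from the color of $v$, which uses both that $E_f$ is connected and spanning (so every vertex is reachable) and that $G$ is bipartite (so any two $E_f$-paths between the same pair of vertices have the same length parity, since they are paths in the bipartite graph $G$). Once that is in place, the contradiction with $f(u)=f(v)$ across a bichromatic edge is immediate. I expect the whole proof to be short; the only thing to be slightly watchful about is not conflating ``$E_f$-path'' parity with ``$G$-path'' parity — but since $E_f\subseteq E$ and $G$ is bipartite, all path-length parities between a fixed pair of vertices agree, so this causes no trouble.
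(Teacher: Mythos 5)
Your argument is correct and is essentially the paper's own proof: both use that $E_f$ is connected and spanning to produce an $E_f$-path between the endpoints of a putative constant edge, and then bipartiteness to force odd length, contradicting that an odd number of $\pm1$ increments sums to zero. The paper states this more compactly (directly observing that the total variation of $f$ along the odd-length path cannot vanish), but the underlying parity argument is identical.
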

\begin{proof}
If $f(v)-f(w)=0$ for some adjacent $v, w\in V$, then in the spanning set supporting the nonzero values of $\vert f(v)-f(w)\vert$ there is a path $\pi$ from $v$ to $w$. Since $G$ is bipartite, the length of $\pi$ is odd, but a sum of an odd number of ones and minus ones cannot be zero -- which  it should be in order for the total variation $\vert f(v)-f(w)\vert$ along $\pi$ to be null.
\end{proof}

Lemma \ref{lem:nonzero} gives an immediate upper bound for the number of facets of $\mathcal{P}_G$ when $G$ is bipartite and connected.

\begin{corollary}\label{cor:upperbound}
Let $G$ be a connected bipartite graph. Then $\facets{\mathcal{P}_G} \leq 2^{|V|-1}.$
\end{corollary}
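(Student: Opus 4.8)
The plan is to use the facet characterization from Theorem~\ref{thm:facets} together with Lemma~\ref{lem:nonzero} to produce an injection from the set of facet-defining functions into a set of size $2^{|V|-1}$. First I would fix a connected bipartite graph $G=(V,E)$ with bipartition $V = V_0 \sqcup V_1$, and fix a vertex $v_0 \in V$; by the normalization discussed before Theorem~\ref{thm:facets}, every facet of $\mathcal{P}_G$ is defined by a \emph{unique} function $f\colon V \to \ZZ$ with $f(v_0) = 0$. The point is to show that such an $f$ is completely determined by a binary choice at each of the remaining $|V|-1$ vertices.

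Next I would invoke Lemma~\ref{lem:nonzero}: since $G$ is connected and bipartite, any facet-defining $f$ satisfies $|f(v)-f(w)| = 1$ for \emph{every} edge $\{v,w\}$, not merely on a spanning subgraph. In other words, $f$ is a genuine graph homomorphism from $G$ to the infinite path (equivalently, $f$ changes by exactly $\pm 1$ along every edge). The key step is then the observation that such an $f$ is determined by its parity-adjusted values: concretely, consider the function $g\colon V \to \{0,1\}$ defined by $g(v) \equiv f(v) + \epsilon(v) \pmod 2$, where $\epsilon(v) \in \{0,1\}$ records which side of the bipartition $v$ lies on. Along any edge $\{v,w\}$, the quantity $f(v) - \epsilon(v)$ and $f(w) - \epsilon(w)$ differ by $0$ or $2$, so the map $v \mapsto f(v) - \epsilon(v)$ is \emph{locally constant} on each bipartition class but can jump between classes; a cleaner route is to simply argue by induction along a spanning tree of $G$. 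Fixing a spanning tree $T$ rooted at $v_0$, the value $f(v)$ is obtained from $f$ at the parent of $v$ by adding $+1$ or $-1$, so the restriction $f|_{V}$ is determined by the string of signs along the $|V|-1$ edges of $T$. This gives an injection $f \mapsto (\text{sign pattern on } T) \in \{+1,-1\}^{|V|-1}$, hence $\facets{\mathcal{P}_G} \le 2^{|V|-1}$.

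\textbf{Main obstacle.} The subtle point — and the reason Lemma~\ref{lem:nonzero} is essential rather than cosmetic — is that without bipartiteness, condition~(ii) of Theorem~\ref{thm:facets} only forces $|f(v)-f(w)|=1$ on a spanning connected subgraph $E_f$, while other edges may satisfy $f(v)=f(w)$; then the value of $f$ at a vertex is no longer read off from a single spanning tree of $G$ in a two-valued way, and the counting breaks. Once Lemma~\ref{lem:nonzero} upgrades this to ``$\pm 1$ on every edge,'' the spanning-tree propagation argument goes through verbatim, and the injectivity is immediate since distinct sign patterns on $T$ give distinct functions (they already differ at the endpoint of the first edge where the patterns disagree, reached by the unique tree path). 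I would also note in passing that the bound is generally not tight: not every sign pattern on $T$ extends to a well-defined function on all of $G$ (cycles of $G$ impose constraints) and, even when it does, condition~(ii) of Theorem~\ref{thm:facets} — connectivity of $E_f$, which for bipartite $G$ is automatic once (i) holds with equality everywhere — is what makes the bound exact precisely in the bipartite case; this is why Corollary~\ref{cor:upperbound} is phrased as an inequality in general but can be matched for trees (Example~\ref{ex:cross}) and other bipartite families.
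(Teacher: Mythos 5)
Your proof is correct and takes essentially the same route as the paper: fix the value at one vertex, use Lemma~\ref{lem:nonzero} to force $|f(v)-f(w)|=1$ along every edge, and propagate the two choices along a spanning tree to get an injection into $\{+1,-1\}^{|V|-1}$. The parity/homomorphism detour in your second paragraph is superfluous (you abandon it yourself), but the cleaned-up spanning-tree argument that follows is exactly the paper's.
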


\begin{proof}
Being connected, $G$ contains a spanning tree. After fixing the value of a certain vertex, by repeatedly applying Lemma \ref{lem:nonzero} we have two possibilities for the value of any other vertex in the spanning tree, and hence in $G$ itself.
\end{proof}

\begin{remark}
As follows from the proof, the upper bound in Corollary \ref{cor:upperbound} is realized when $G$ is a tree. In that case, $\mathcal {P}_G$ is combinatorially equivalent to a $(\vert V \vert -1)$-dimensional cross-polytope, cf.~Example \ref{ex:cross}. The bound fails for graphs that are not bipartite: for example, in the case of the complete graph $G=K_n$, the polytope $\mathcal{P}_G$ has $2^n-2$ facets. 
\end{remark}

Let now $G_1$ and $G_2$ be two connected graphs. We first recall what happens when $G_1$ and $G_2$ are joined by one vertex. Let $G$ be a graph obtained by identifying a vertex  $v_1$ of $G_1$ with a vertex $v_2$ of $G_2$. By \cite[Proposition 4.2]{ohsugi2019h} $\mathcal{P}_G=\mathcal{P}_{G_1}\oplus \mathcal{P}_{G_2}$
is the direct sum of the two polytopes and the $h^*$ polynomial for $\mathcal{P}_G$ is the product of the respective $h^*$-polynomials. In particular, the normalized volume is the product of the respective normalized volumes.

\begin{remark}
We note that the polytope $\mathcal{P}_G$ is exactly the same when $G$ is a disjoint union of $G_1$ and $G_2$.
\end{remark}

\begin{remark}\label{rem:poset} Let $P$ and $Q$ be two polytopes with posets of faces $\mathscr F(P)$ and $\mathscr F(Q)$. For any poset $O$ with a unique maximal element let $O^{\setminus \hat 1}$ be the restriction of $O$ to elements that are not maximal. The poset of faces of the direct sum $P\oplus Q$ satisfies 
${\mathscr F}(P\oplus Q)^{\setminus \hat 1} \simeq {\mathscr F(P)}^{\setminus \hat 1}\times {\mathscr F(Q)}^{\setminus \hat 1}$.
\end{remark}

A more sophisticated case is when $G_1$ and $G_2$ are joined by an edge. We start with an easy observation. From now on, let $G$ be the graph obtained from $G_1$ and $G_2$ by identifying an edge.  

\begin{proposition}\label{prop:facetsconnection}
Let $G_1$ and $G_2$ be two connected bipartite graphs. Suppose the symmetric edge polytopes $\mathcal{P}_{G_1}$ and $\mathcal{P}_{G_2}$ have respectively $f_1$ and $f_2$ facets. Then the number of facets of $\mathcal{P}_G$ equals $\frac{1}{2}f_1f_2$.
\end{proposition}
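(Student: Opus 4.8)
The plan is to combine the facet description of Theorem~\ref{thm:facets} with Lemma~\ref{lem:nonzero}, reducing the assertion to a short counting argument. Write $\{a,b\}$ for the edge along which $G_1$ and $G_2$ are identified, so that $V(G)=V(G_1)\cup V(G_2)$ with $V(G_1)\cap V(G_2)=\{a,b\}$, and $E(G)=E(G_1)\cup E(G_2)$ with $\{a,b\}$ the only shared edge. First I would check that $G$ is again connected and bipartite: connectedness is immediate, and since $a$ and $b$ lie in different color classes in any proper $2$-coloring of $G_1$ and of $G_2$, one may permute the colors of $G_2$ so that the two colorings agree on $\{a,b\}$, yielding a proper $2$-coloring of $G$. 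Thus Lemma~\ref{lem:nonzero} applies to $G$, $G_1$ and $G_2$ alike: a function $f$ is facet-defining exactly when $\vert f(v)-f(w)\vert=1$ on \emph{every} edge. (One direction is Lemma~\ref{lem:nonzero}; for the converse, $E_f$ then equals the full, hence spanning and connected, edge set, so condition (ii) of Theorem~\ref{thm:facets} holds automatically.)

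Next, I would fix the vertex $a$ and normalize every facet-defining function — of $\mathcal P_G$, of $\mathcal P_{G_1}$, and of $\mathcal P_{G_2}$ — so that it vanishes at $a$; this is legitimate because such a function is only determined up to an additive constant. With this convention, I claim restriction induces a bijection between the facets of $\mathcal P_G$ and the pairs $(F_1,F_2)$, where $F_i$ is a facet of $\mathcal P_{G_i}$, whose (normalized) defining functions take the same value at $b$. Indeed, a facet-defining $f$ on $G$ restricts to facet-defining functions on $G_1$ and on $G_2$ by the characterization above, and these agree at $b$; conversely, two normalized facet-defining functions $f_1$ on $G_1$ and $f_2$ on $G_2$ with $f_1(b)=f_2(b)$ agree on $V(G_1)\cap V(G_2)=\{a,b\}$ and hence glue to a well-defined $f$ on $V(G)$ with $\vert f(v)-f(w)\vert=1$ on all edges of $G$, i.e.\ to a facet of $\mathcal P_G$. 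These operations are mutually inverse.

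It remains to count. Since $\{a,b\}$ is an edge and $f(a)=0$, every normalized facet-defining function of $\mathcal P_{G_i}$ sends $b$ to $+1$ or to $-1$; let $f_i^{+}$ and $f_i^{-}$ count the facets in each case, so $f_i^{+}+f_i^{-}=f_i$. By the bijection above, $\facets{\mathcal P_G}=f_1^{+}f_2^{+}+f_1^{-}f_2^{-}$. The last ingredient — the only one needing a small observation rather than bookkeeping — is the involution $f\mapsto -f$: it preserves both conditions of Theorem~\ref{thm:facets} and the normalization at $a$, while exchanging the value $+1$ at $b$ with $-1$, hence is a bijection between the two classes of facets of $\mathcal P_{G_i}$. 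Therefore $f_i^{+}=f_i^{-}=f_i/2$ and $\facets{\mathcal P_G}=2\cdot(f_1/2)(f_2/2)=\tfrac12 f_1 f_2$. I expect the only delicate point to be applying the ``up to an additive constant'' normalization coherently across all three graphs, so that restriction and gluing are genuinely inverse to each other; beyond that the argument is routine.
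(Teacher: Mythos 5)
Your proof is correct and takes essentially the same approach as the paper: both rely on Theorem~\ref{thm:facets}, Lemma~\ref{lem:nonzero}, and the gluing/restriction correspondence for facet-defining functions normalized at the shared vertex, with the factor $\tfrac12$ coming from the $f\mapsto -f$ symmetry. The only organizational difference is that the paper phrases the bijection in terms of antipodal pairs of facets $B(\mathcal P_{G_i})$ (absorbing the sign ambiguity from the start), whereas you count facets directly, split them into the classes $f_i^{+}$ and $f_i^{-}$, and use the involution at the end to show $f_i^{+}=f_i^{-}=f_i/2$; this is the same content arranged slightly differently.
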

\begin{proof}
Since any symmetric edge polytope $\mathcal{P}$ is centrally symmetric, its facets come in antipodal pairs $(F, -F)$. Let $B(\mathcal{P})$ be the set of such pairs.

We prove the claim by constructing a bijection between $B(\mathcal{P}_{G_1})\times B(\mathcal{P}_{G_2})$ and $B(\mathcal{P}_G)$. Let $e_1=(v_1,w_1)$ (resp.~$e_2=(v_2,w_2)$) be the edge of $G_1$ (resp.~$G_2$) 
that will be identified. Consider a facet $F_1$ of $\mathcal{P}_{G_1}$. By Theorem \ref{thm:facets} we know that such a facet is represented by a function $g_1\colon V_{G_1}\rightarrow\ZZ$, 
where we may assume $g_1(v_1)=0$. Further, as the graph $G_1$ is bipartite, by Lemma \ref{lem:nonzero} we must have $g_1(w_1)\neq 0$. Thus $g_1(w_1)=\pm 1$. 
Analogously for $G_2$ we consider a facet $F_2$ and a function $g_2$ with $g_2(v_2)=0$ and $g_2(w_2)=\pm 1$. By exchanging $g_2$ with $-g_2$ without loss 
of generality we may assume $g_1(w_1)=g_2(w_2)$. As $g_1(v_1)=0=g_2(v_2)$ we see that $g_1$ and $g_2$ induce a function on $G$ that defines a facet. Indeed, on no two vertices joined by an edge the function differs by more than one and the edges on which the function differs exactly by one contain a spanning 
tree. We have thus defined an injective function from $B(\mathcal{P}_{G_1})\times B(\mathcal{P}_{G_2})$ to $B(\mathcal{P}_G)$. Noting that $G$ is bipartite connected itself, one gets the inverse function simply by restricting the defining function $g$ (and $-g$) to $G_1$ and $G_2$.
\end{proof}
\begin{corollary}\label{cor:numfac}
Let $H$ be a graph obtained by joining $k$ even cycles of lengths $2a_1,\dots,2a_k$, consecutively by an edge. The number of facets of $\mathcal{P}_H$ equals
$\frac{1}{2^{k-1}}\prod_{i=1}^k \binom{2a_i}{a_i}$. 
\end{corollary}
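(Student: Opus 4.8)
The plan is to proceed by induction on $k$, using Proposition~\ref{prop:facetsconnection} as the inductive step and Corollary~\ref{cor:cyc2k} to compute the number of facets of a single even cycle. First I would set up the induction carefully. Let $H_j$ denote the graph obtained by gluing the first $j$ even cycles $C_{2a_1}, \dots, C_{2a_j}$ consecutively by an edge; thus $H_1 = C_{2a_1}$ and $H_k = H$. The crucial observation is that each $H_j$ is connected and bipartite: it is connected by construction, and it is bipartite because it is built by identifying an edge of a bipartite graph $C_{2a_j}$ (every even cycle is bipartite) with an edge of the bipartite graph $H_{j-1}$, and gluing two bipartite graphs along a single edge yields a bipartite graph (the $2$-colorings can be matched up along the identified edge). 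This is the only structural point that needs checking, and it is what licenses the use of Proposition~\ref{prop:facetsconnection} at every stage.

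For the base case, Corollary~\ref{cor:cyc2k} gives $\facets{\mathcal{P}_{H_1}} = \facets{\mathcal{P}_{C_{2a_1}}} = \binom{2a_1}{a_1}$, which matches $\frac{1}{2^{0}}\prod_{i=1}^{1}\binom{2a_i}{a_i}$. For the inductive step, suppose $\facets{\mathcal{P}_{H_{j-1}}} = \frac{1}{2^{j-2}}\prod_{i=1}^{j-1}\binom{2a_i}{a_i}$. Since $H_j$ is obtained from the connected bipartite graphs $H_{j-1}$ and $C_{2a_j}$ by identifying an edge, Proposition~\ref{prop:facetsconnection} gives
\[
\facets{\mathcal{P}_{H_j}} = \frac{1}{2}\cdot\facets{\mathcal{P}_{H_{j-1}}}\cdot\facets{\mathcal{P}_{C_{2a_j}}} = \frac{1}{2}\cdot\frac{1}{2^{j-2}}\prod_{i=1}^{j-1}\binom{2a_i}{a_i}\cdot\binom{2a_j}{a_j} = \frac{1}{2^{j-1}}\prod_{i=1}^{j}\binom{2a_i}{a_i}.
\]
Taking $j = k$ yields the claimed formula $\facets{\mathcal{P}_H} = \frac{1}{2^{k-1}}\prod_{i=1}^{k}\binom{2a_i}{a_i}$.

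I do not anticipate a serious obstacle here; the argument is essentially a bookkeeping exercise once the two ingredients are in place. The one point that deserves a sentence of care is the verification that bipartiteness and connectedness are preserved at each gluing step, since Proposition~\ref{prop:facetsconnection} has both hypotheses and they must hold for $H_{j-1}$ (not just for the original cycles) before it can be applied. A minor subtlety worth noting is the phrase ``consecutively by an edge'': one should make explicit that at stage $j$ the cycle $C_{2a_j}$ is glued along an edge of the previously-built graph, so that the result is again a single connected graph with exactly one shared edge between the new cycle and the rest — this ensures the hypotheses of Proposition~\ref{prop:facetsconnection} are literally satisfied with $G_1 = H_{j-1}$ and $G_2 = C_{2a_j}$. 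No further difficulties arise.
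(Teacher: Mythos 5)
Your proof is correct and follows exactly the same route as the paper: induction on $k$ with Corollary~\ref{cor:cyc2k} as the base case and Proposition~\ref{prop:facetsconnection} as the inductive step. The extra care you take in verifying that each intermediate graph $H_j$ remains connected and bipartite is a worthwhile addition that the paper leaves implicit.
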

\begin{proof}
The proof is by induction on $k$. The case $k=1$ follows from Corollary \ref{cor:cyc2k}. The inductive step is exactly Proposition \ref{prop:facetsconnection}.
\end{proof}
The following proposition may be proved directly. However, we will derive it as an easy corollary of Theorem \ref{thm:hjoin} which describes the $h^*$-polynomial of $\mathcal{P}_G$. It may be also derived from \cite[Corollary 4.5]{ohsugi2019h}.

\begin{proposition}
Let $H$ be a graph obtained by joining $k$ even cycles of lengths $2a_1,\dots,2a_k$, consecutively by an edge. The normalized volume of $\mathcal{P}_H$ equals
$\frac{1}{2^{k-1}}\prod_{i=1}^k a_i \binom{2a_i}{a_i}$.
\end{proposition}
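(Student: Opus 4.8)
The plan is to deduce this from the multiplicativity of the $h^*$-polynomial under edge-gluing, which is precisely the content of the yet-to-be-proved Theorem~\ref{thm:hjoin}, exactly as the paper announces. First I would recall that for a full-dimensional lattice polytope, the normalized volume equals $\sum_j h^*_j$, i.e.\ the value $h^*(1)$. Thus if $G$ is obtained from $G_1$ and $G_2$ by identifying an edge, and if Theorem~\ref{thm:hjoin} gives $h^*_{\mathcal{P}_G}(t) = \frac{1}{2}\,h^*_{\mathcal{P}_{G_1}}(t)\,h^*_{\mathcal{P}_{G_2}}(t)$ (up to the precise normalization constant supplied by that theorem), then evaluating at $t=1$ yields $\vol(\mathcal{P}_G) = \frac{1}{2}\vol(\mathcal{P}_{G_1})\vol(\mathcal{P}_{G_2})$. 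An alternative self-contained route, which I would mention as a fallback, is to mimic the facet count in Proposition~\ref{prop:facetsconnection}: the unimodular triangulation from Theorem~\ref{thm:GB} restricted to a facet $F$ of $\mathcal{P}_G$ decomposes (via the spanning-tree description of Corollary~\ref{cor:simpinfacet}) compatibly with the edge-gluing, so each top simplex of the triangulation of $\mathcal{P}_G$ should correspond to a pair of top simplices of the triangulations of $\mathcal{P}_{G_1}$ and $\mathcal{P}_{G_2}$, again with a factor $\frac{1}{2}$ coming from the antipodal identification at the glued edge.

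Next I would set up the induction on $k$, exactly paralleling Corollary~\ref{cor:numfac}. The base case $k=1$ is the statement that $\vol(\mathcal{P}_{C_{2a_1}}) = a_1\binom{2a_1}{a_1}$, which is the corollary to Corollary~\ref{cor:cyc2k} stated just before Proposition~\ref{prop:fullFeven} (each of the $\binom{2a_1}{a_1}$ facets has volume $a_1$). For the inductive step, let $H$ be the consecutive edge-gluing of $C_{2a_1},\dots,C_{2a_k}$ and let $H'$ be the gluing of the first $k-1$ of them; then $H$ is obtained from $H'$ and $C_{2a_k}$ by identifying an edge. Both $H'$ and $C_{2a_k}$ are connected bipartite graphs (a point worth noting: gluing even cycles along edges preserves bipartiteness, since the shared edge forces compatible $2$-colorings), so the edge-gluing volume formula applies and gives
\[
\vol(\mathcal{P}_H) \;=\; \tfrac{1}{2}\,\vol(\mathcal{P}_{H'})\,\vol(\mathcal{P}_{C_{2a_k}}) \;=\; \tfrac{1}{2}\cdot\tfrac{1}{2^{k-2}}\Bigl(\prod_{i=1}^{k-1} a_i\tbinom{2a_i}{a_i}\Bigr)\cdot a_k\tbinom{2a_k}{a_k} \;=\; \tfrac{1}{2^{k-1}}\prod_{i=1}^{k} a_i\tbinom{2a_i}{a_i},
\]
completing the induction.

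The main obstacle is that this proof is genuinely parasitic on Theorem~\ref{thm:hjoin} (or on \cite[Corollary 4.5]{ohsugi2019h}), which has not yet been proved at this point in the paper; the honest "main step" is establishing the multiplicative behavior of the $h^*$-polynomial under edge-gluing, and that is deferred. If one wanted the direct argument instead, the delicate point would be verifying that the regular unimodular triangulation of $\mathcal{P}_H$ induced by the \emph{degrevlex} Gröbner basis of Theorem~\ref{thm:GB} really does restrict to the product of the triangulations of the two pieces — one must check that the bad monomials (leading terms coming from even cycles, odd cycles, and the $x_ey_e$ relations) in $H$ are exactly the "union" of those in $H'$ and in $C_{2a_k}$, with no new interaction across the glued edge, and that the edge-ordering can be chosen consistently so that the smallest-edge conditions in part~(1) of Theorem~\ref{thm:GB} match up. Since the paper explicitly says "we will derive it as an easy corollary of Theorem~\ref{thm:hjoin}", I would go with the $h^*(1)$ route and keep the proof to the two or three lines of the induction above.
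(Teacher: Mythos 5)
Your proposal is correct and is exactly the approach the paper announces: it states that the proposition ``will be derived as an easy corollary of Theorem~\ref{thm:hjoin}'' (without writing out the details), and your argument --- $h^*(1)$ gives normalized volume, Theorem~\ref{thm:hjoin} yields $\vol(\mathcal{P}_G)=\tfrac12\vol(\mathcal{P}_{G_1})\vol(\mathcal{P}_{G_2})$, then induct as in Corollary~\ref{cor:numfac} with base case $\vol(\mathcal{P}_{C_{2a}})=a\binom{2a}{a}$ --- is precisely the fill-in of that gap. You also correctly flag the forward-reference dependence on Theorem~\ref{thm:hjoin}, which the paper shares.
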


In order to determine the $h^*$-polynomial of $\mathcal{P}_G$ we need a few preparatory lemmas.
\begin{lemma} \label{lemma:initial}
Let $H$ be a bipartite graph and let $e$ be one of its edges. Let $\prec$ be a degrevlex order such that $z \prec x_e \prec y_e \prec v$ for each variable $v \notin \{z, x_e, y_e\}$. Then
\[\texttt{in}_{\prec}I_{\mathcal{P}_H} = (x_ey_e) + J_H,\]
where the generators of the ideal $J_H$ do not involve $z$, $x_e$ nor $y_e$.
\end{lemma}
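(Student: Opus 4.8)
The goal is to understand the initial ideal of the toric ideal of a symmetric edge polytope of a bipartite graph $H$ with respect to a degrevlex order that puts the two variables $x_e,y_e$ associated to a fixed edge $e$ just above $z$ and below all other variables. The main tool is Theorem \ref{thm:GB}, which gives an explicit Gr\"obner basis for $I_{\mathcal{P}_H}$ with respect to \emph{any} degrevlex order of the prescribed shape $z\prec x_{e_1}\prec y_{e_1}\prec\cdots$. So first I would order the edges of $H$ so that $e=e_1$ is the smallest edge and apply Theorem \ref{thm:GB} verbatim. The leading monomials of the Gr\"obner basis elements then generate $\texttt{in}_{\prec}I_{\mathcal{P}_H}$, and the plan is simply to sort these leading monomials into those that involve $x_e$ or $y_e$ and those that do not.

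\textbf{Key steps.} The type (3) relations are $x_fy_f-z^2$ for each edge $f$; for $f=e$ the leading term is $x_ey_e$ (this is the generator $(x_ey_e)$ in the statement), and for $f\neq e$ the leading term $x_fy_f$ involves none of $z,x_e,y_e$. For the cycle relations, the crucial point is that \emph{$H$ is bipartite, so it has no odd cycles}; hence the type (2) relations are vacuous and only type (1) relations occur. A type (1) relation for a $2k$-cycle $C$ has leading monomial $\prod_{e'\in J}p_{e'}$ where $J$ is a $k$-subset of the edges of $C$ \emph{not containing the smallest edge of $C$}. Now I must check that such a leading monomial never involves $x_e$ or $y_e$. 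If $e\notin C$ this is immediate. If $e\in C$, then since $e=e_1$ is the globally smallest edge it is in particular the smallest edge of $C$, so by the constraint in Theorem \ref{thm:GB}(1) the subset $J$ does not contain $e$; therefore $p_{e'}\neq x_e,y_e$ for every $e'\in J$, and again the leading monomial avoids $x_e$ and $y_e$ (and it trivially avoids $z$, since type (1) leading monomials never involve $z$). Collecting all type (1) leading monomials together with the type (3) leading monomials $x_fy_f$ for $f\neq e$, we obtain an ideal $J_H$ whose generators involve none of $z,x_e,y_e$, and $\texttt{in}_{\prec}I_{\mathcal{P}_H}=(x_ey_e)+J_H$ as claimed. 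One should also remark that the term order in Lemma \ref{lemma:initial} only specifies $z\prec x_e\prec y_e\prec v$ for $v\notin\{z,x_e,y_e\}$ and leaves the order among the remaining variables unspecified; but since the conclusion is just a statement about which variables appear in the generators, any completion of this partial order to a degrevlex order of the required form works, so there is no loss of generality in fixing one.

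\textbf{Main obstacle.} There is essentially no deep obstacle here: the content is entirely bookkeeping on top of Theorem \ref{thm:GB}. The one genuine point to get right is the interplay between ``$e$ is the smallest edge of $H$'' and the hypothesis in Theorem \ref{thm:GB}(1) that $J$ omits the smallest edge \emph{of the cycle $C$} --- one must note that the smallest edge of $H$, if it lies on $C$, is a fortiori the smallest edge of $C$, which is what forces $e\notin J$. The other point worth stating explicitly (rather than a difficulty) is the use of bipartiteness to kill all type (2) relations; without it, a type (2) relation whose cycle contains $e$ could a priori produce a leading monomial divisible by $x_e$ or $y_e$, and the clean splitting would fail.
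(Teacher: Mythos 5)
Your proof is correct and is essentially the same argument as the paper's (which is stated in a single sentence: since $H$ is bipartite the type~(2) relations are vacuous, and the leading terms of the remaining Gr\"obner basis elements are either $x_ey_e$ or avoid $z,x_e,y_e$). You have merely filled in the bookkeeping details of applying Theorem~\ref{thm:GB}, including the key observation that $e$ being globally smallest forces $e\notin J$ in every type~(1) leading term whose cycle passes through $e$.
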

\begin{proof}
As $H$ has no odd cycles, by Theorem \ref{thm:GB} the generators of the initial ideal are either $x_ey_e$ or do not involve $x_e,y_e$ at all.
\end{proof}
\begin{proposition}\label{prop:initialideals}
Let $G_1$ be a bipartite graph and let $G_2$ be any graph. Let $G$ be the graph obtained from $G_1$ and $G_2$ by identifying one edge $e$. Then there exist degrevlex orders $\prec_1$, $\prec_2$ and $\prec$ such that
\[\texttt{in}_{\prec}I_{\mathcal{P}_G} = \texttt{in}_{\prec_1}I_{\mathcal{P}_{G_1}} + \texttt{in}_{\prec_2}I_{\mathcal{P}_{G_2}},\]
where the rings of $I_{\mathcal{P}_{G_1}}$ and $I_{\mathcal{P}_{G_2}}$ share the variables $z$, $x_e$ and $y_e$. 
\end{proposition}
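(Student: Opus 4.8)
The plan is to choose the three orders so that they are jointly ``compatible'' on the shared variables $z, x_e, y_e$, and then to compare Gr\"obner bases directly via Theorem \ref{thm:GB}. Concretely, I would first order the edges of $G$ so that $e$ comes first, and then list the remaining edges of $G_1$ followed by those of $G_2$; this induces a degrevlex order $\prec$ on the polynomial ring of $\mathcal{P}_G$ with $z \prec x_e \prec y_e$ below all other variables. I let $\prec_1$ (resp.\ $\prec_2$) be the degrevlex order on the ring of $\mathcal{P}_{G_1}$ (resp.\ $\mathcal{P}_{G_2}$) obtained by restricting this global edge ordering — again with $e$ first, so that $z \prec x_e \prec y_e$ sit at the bottom. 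Since $G_1$ is bipartite, Lemma \ref{lemma:initial} applies and gives $\texttt{in}_{\prec_1} I_{\mathcal{P}_{G_1}} = (x_e y_e) + J_{G_1}$ with $J_{G_1}$ not involving $z, x_e, y_e$.

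Next I would invoke Theorem \ref{thm:GB} for the graph $G$ itself. The key structural observation is that every cycle of $G$ lies entirely in $G_1$ or entirely in $G_2$: indeed, $G$ is obtained by gluing along the single edge $e$, so any cycle using a vertex of $G_1 \setminus e$ and a vertex of $G_2 \setminus e$ would have to cross the two-vertex separator $\{v_e, w_e\}$ twice and hence would revisit a vertex. Therefore the Gr\"obner basis elements from Theorem \ref{thm:GB} of types (1) and (2) for $G$ split into those coming from cycles of $G_1$ and those coming from cycles of $G_2$. The type-(3) binomials $x_f y_f - z^2$ likewise split according to whether $f \in E(G_1)$ or $f \in E(G_2)$ (the shared generator $x_e y_e - z^2$ appearing in both). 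Since the chosen orders agree on the common variables and on the relative order of the edges, the leading term of each such binomial computed with $\prec$ coincides with the leading term computed with $\prec_1$ or $\prec_2$. Hence $\texttt{in}_{\prec} I_{\mathcal{P}_G}$ is generated by the union of the generators of $\texttt{in}_{\prec_1} I_{\mathcal{P}_{G_1}}$ and $\texttt{in}_{\prec_2} I_{\mathcal{P}_{G_2}}$, which is the claimed equality (with the two rings glued along $z, x_e, y_e$).

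The main point requiring care — and what I expect to be the real obstacle — is the bookkeeping around the shared variables and the claim that the decomposition of Gr\"obner basis elements is \emph{exactly} along the $G_1/G_2$ split with no leftover cross terms. One has to be sure that no minimal generator of $\texttt{in}_{\prec} I_{\mathcal{P}_G}$ mixes $x_e$ or $y_e$ with variables from \emph{both} sides; this is exactly where bipartiteness of $G_1$ enters through Lemma \ref{lemma:initial}, forcing the $e$-binomial on the $G_1$ side to be just $x_e y_e$ rather than something involving an odd cycle through $e$. (On the $G_2$ side, odd cycles through $e$ are harmless, since their Gr\"obner basis binomials $\prod_{f\in J} p_f - z\prod_{f \in C\setminus J} q_f$ stay within the $G_2$-ring.) I would also double-check that Theorem \ref{thm:GB} genuinely produces a Gr\"obner basis for $\prec$ — it does, since $\prec$ is a degrevlex order with $z$ smallest and the stated edge-compatible variable order — so that $\texttt{in}_\prec I_{\mathcal P_G}$ is computed from leading terms of exactly those binomials, with nothing hidden.
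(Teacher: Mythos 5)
The key structural claim in your proof --- that ``every cycle of $G$ lies entirely in $G_1$ or entirely in $G_2$'' --- is false, and this is precisely where the real content of the proposition lies. If $e=\{v,w\}$, a cycle of $G$ may travel from $v$ to $w$ through $G_1\setminus\{e\}$ and back from $w$ to $v$ through $G_2\setminus\{e\}$: such a cycle crosses the separator $\{v,w\}$ by using $v$ once and $w$ once, so it revisits no vertex. (For a concrete example, glue a $4$-cycle $vwcd$ to a triangle $vwu$ along $e=vw$; the cycle $v,u,w,c,d,v$ is neither in $G_1$ nor in $G_2$.) Your argument that the separator forces a repeated vertex is a confusion between ``crossing a separator twice'' and ``revisiting a vertex''; a cycle in a $2$-connected graph can and typically does pass through both vertices of a $2$-vertex cut.

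Once you acknowledge these straddling cycles, your claim that the Gr\"obner basis of Theorem \ref{thm:GB} for $G$ splits cleanly into the $G_1$-part and the $G_2}$-part breaks down: straddling cycles $C$ (which necessarily avoid $e$) produce type-(1) or type-(2) binomials whose leading monomials involve variables from both sides and appear in neither $\texttt{in}_{\prec_1}I_{\mathcal{P}_{G_1}}$ nor $\texttt{in}_{\prec_2}I_{\mathcal{P}_{G_2}}$ as written. The paper's proof does exactly the extra work you skipped: for such a $C$, writing $C\cap G_1$ as a path of $2a-1$ edges (odd length by bipartiteness of $G_1$) and $C\cap G_2$ as a path of $2b-1$ or $2b$ edges, a pigeonhole count shows that any monomial arising from $C$ is already divisible by a monomial coming from the smaller cycle $(C\cap G_1)\cup\{e\}$ in $G_1$ or $(C\cap G_2)\cup\{e\}$ in $G_2$ --- using crucially that $e$ is the smallest edge in those cycles, so it is excluded from the index set $J$ in Theorem \ref{thm:GB}. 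That is the step where bipartiteness of $G_1$ earns its keep, not (as you suggest) by preventing odd-cycle binomials involving $x_e,y_e$. Your choice of orders and your treatment of cycles contained in one side are fine; the missing piece is the reduction of straddling-cycle leading terms.
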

\begin{proof}
Let $G_1$ have edges $e_1 = e, e_2, \ldots, e_n$ and $G_2$ have edges $e'_1 = e, e'_2,\ldots, e'_m$, where $e_1$ and $e'_1$ are the edges that will be identified in $G$. Let us fix total orders of the variables on $\mathcal{P}_{G_1}$ and $\mathcal{P}_{G_2}$ so that $z \prec x_{e} \prec y_{e} \prec x_{e_2} \prec y_{e_2} \prec \ldots \prec x_{e_n} \prec y_{e_n}$ and $z \prec x_{e} \prec y_{e} \prec x_{e'_2} \prec y_{e'_2} \prec \ldots \prec x_{e'_m} \prec y_{e'_m}$. We let $\prec_1$ and $\prec_2$ be the degrevlex orders with respect to the given orders of the variables. Pick as $\prec$ a degrevlex order with $z \prec x_{e} \prec y_{e} \prec v$ for all variables $v \notin \{z, x_e, y_e\}$ which is also compatible with the variable orders in $G_1$ and $G_2$.
By Theorem \ref{thm:GB} we know that any generator of the squarefree initial ideal of $I_{\mathcal{P}_G}$ is one of the following:
\begin{itemize}
\item $x_{\tilde{e}}y_{\tilde{e}}$ for some edge $\tilde{e}$ in $G$;
\item the product of the variables corresponding to any $k+1$ edges  inside an oriented $(2k+1)$-cycle;
\item the product of the variables corresponding to any $k$ edges inside an oriented $2k$-cycle, provided such edges do not contain the smallest one.
\end{itemize} Let us fix a monomial of the second or third type and let $C$ be the cycle in $G$ from which it arises. If $C$ is entirely contained into $G_1$ (respectively, $G_2$), our monomial appears already in $\texttt{in}_{\prec_1}I_{\mathcal{P}_{G_1}}$ (respectively, $\texttt{in}_{\prec_2}I_{\mathcal{P}_{G_2}}$). If this is not the case, then $C$ does not contain the edge $e$. Note that, since $G_1$ is bipartite, it contains no odd cycle: in particular, $C \cap G_1$ must consist of $2a-1$ edges.
\begin{itemize}
\item If $C$ is a $2k$-cycle, then it consists of $2a-1$ edges in $G_1$ and $2b-1$ edges in $G_2$, where $(2a-1) + (2b-1) = 2k$. Since we need to pick $k = a+b-1$ edges, by the pigeonhole principle we are forced to select at least $a$ edges from $C \cap G_1$ or $b$ edges from $C \cap G_2$. But since $(C \cap G_1) \cup \{e\}$ (respectively, $(C \cap G_2) \cup \{e\}$) is a $2a$-cycle in $G_1$ (respectively, a $2b$-cycle in $G_2$) where $e$ is the smallest edge, the monomial we chose is divisible by a monomial in $\texttt{in}_{\prec_1}I_{\mathcal{P}_{G_1}}$ (respectively, $\texttt{in}_{\prec_2}I_{\mathcal{P}_{G_2}}$), as desired.
\item If $C$ is a $(2k+1)$-cycle, then $C \cap G_2$ consists of $2b$ edges, where $(2a-1) + 2b = 2k+1$. If we pick $a+b$ edges, by the pigeonhole principle we must select at least $a$ edges from $C \cap G_1$ or $b+1$ edges from $C \cap G_2$. The conclusion follows in a similar fashion as in the previous case. 
\end{itemize}
The inclusion 
\[\texttt{in}_{\prec}I_{\mathcal{P}_G} \supseteq \texttt{in}_{\prec_1}I_{\mathcal{P}_{G_1}} + \texttt{in}_{\prec_2}I_{\mathcal{P}_{G_2}},\]
is obvious, which finishes the proof.
\end{proof}
\begin{remark}
The proof of Proposition \ref{prop:initialideals} fails if we join any two graphs, as new elements may appear in the reduced Gr\"obner basis. See also Proposition \ref{prop:join_odd_cycles}.
\end{remark}

\begin{lemma}\label{lem:hilbserfor}
Let $I$ be an ideal in the polynomial ring $\CC[{\bf{x}}]$ and denote by $\HS'$ the Hilbert series of the quotient $\CC[\mathbf{x}]/I$. Denoting by $\HS$ the Hilbert series of $\CC[{\bf{x}},y_1,y_2,z]/\left(I+(y_1y_2)\right)$, the following equality holds:
$$\HS=\frac{1+t}{(1-t)^2}\HS'.$$
\end{lemma}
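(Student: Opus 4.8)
The plan is to compute Hilbert series directly from the definition, exploiting the fact that adding the single variables $y_1, y_2, z$ and the one relation $y_1y_2$ interacts cleanly with the grading. First I would recall that if $R = \CC[\mathbf{x}]/I$ and $S$ is a polynomial ring in new variables, then $\HS_{R \otimes_\CC S}(t) = \HS_R(t) \cdot \HS_S(t)$, since a $\CC$-basis of the tensor product in each degree is obtained by multiplying basis monomials. Applying this with $S = \CC[z]$ (a single variable) contributes a factor $\frac{1}{1-t}$.

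The remaining point is to understand $\CC[\mathbf{x}, y_1, y_2]/(I + (y_1 y_2))$, i.e.\ what happens when we adjoin two variables $y_1, y_2$ subject only to $y_1 y_2 = 0$. Here I would observe that $\CC[y_1, y_2]/(y_1 y_2)$ has a $\CC$-basis $\{1\} \cup \{y_1^i : i \geq 1\} \cup \{y_2^j : j \geq 1\}$, so its Hilbert series is $1 + 2\sum_{i \geq 1} t^i = 1 + \frac{2t}{1-t} = \frac{1+t}{1-t}$. Since $y_1, y_2$ are fresh variables not appearing in $I$, we again have $\CC[\mathbf{x}, y_1, y_2]/(I + (y_1 y_2)) \cong R \otimes_\CC \CC[y_1,y_2]/(y_1y_2)$ as graded vector spaces, giving Hilbert series $\HS'(t) \cdot \frac{1+t}{1-t}$. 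Multiplying by the $\frac{1}{1-t}$ from the $z$ variable yields $\HS = \frac{1+t}{(1-t)^2}\HS'$, as claimed.

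Alternatively, and perhaps more in the spirit of the surrounding material, I would run the short exact sequence argument: multiplication by $y_1 y_2$ on $\CC[\mathbf{x}, y_1, y_2, z]/I$ (which is a domain after extending $I$, since $y_1, y_2, z$ are new variables, so this map is injective) gives
\[
0 \to \left(\CC[\mathbf{x}, y_1, y_2, z]/I\right)(-2) \xrightarrow{\cdot y_1 y_2} \CC[\mathbf{x}, y_1, y_2, z]/I \to \CC[\mathbf{x}, y_1, y_2, z]/(I + (y_1 y_2)) \to 0,
\]
so the desired Hilbert series equals $(1 - t^2)$ times the Hilbert series of $\CC[\mathbf{x}, y_1, y_2, z]/I$, which in turn is $\frac{1}{(1-t)^3}\HS'$ by the tensor-product computation for the three new polynomial variables. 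Then $(1-t^2) \cdot \frac{1}{(1-t)^3}\HS' = \frac{1+t}{(1-t)^2}\HS'$.

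There is no real obstacle here; the only point requiring a word of care is the injectivity of multiplication by $y_1 y_2$ (equivalently, that $y_1 y_2$ is a nonzerodivisor on $\CC[\mathbf{x},y_1,y_2,z]/I$), which holds because $I$ is extended from $\CC[\mathbf{x}]$ and hence $\CC[\mathbf{x},y_1,y_2,z]/I$ is a free module over $\CC[\mathbf{x}]/I$ with basis the monomials in $y_1, y_2, z$, on which $y_1 y_2$ acts without kernel. I would present the tensor-product version as the main line of argument since it is cleanest, and perhaps remark on the exact-sequence reformulation.
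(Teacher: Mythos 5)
Your proof is correct, and your second, exact-sequence route is in fact essentially what the paper does in different language: the paper reduces to a monomial ideal and observes that the Hilbert series of $\CC[\mathbf{x},y_1,y_2,z]/I$ is $\frac{1}{(1-t)^3}\HS'$ and that the monomials divisible by $y_1y_2$ contribute $\frac{t^2}{(1-t)^3}\HS'$, which must be subtracted; the subtraction is precisely your short exact sequence $0 \to M(-2) \xrightarrow{\cdot y_1y_2} M \to M/(y_1y_2) \to 0$. Your primary (tensor-product) route is a genuinely different, and arguably cleaner, organization: it avoids the reduction to the initial ideal entirely by observing the graded isomorphism $\CC[\mathbf{x},y_1,y_2,z]/(I+(y_1y_2)) \cong (\CC[\mathbf{x}]/I)\otimes_\CC(\CC[y_1,y_2]/(y_1y_2))\otimes_\CC\CC[z]$ and computing the Hilbert series of the small factor $\CC[y_1,y_2]/(y_1y_2)$ directly as $\frac{1+t}{1-t}$. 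Both of your arguments work for arbitrary $I$, not just monomial, which is a small conceptual gain.

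One small blemish: the parenthetical claim in your alternative argument that $\CC[\mathbf{x},y_1,y_2,z]/I$ is ``a domain after extending $I$'' is false in general (e.g.\ $I=(x_1x_2)$), but this does not matter, because the correct justification for injectivity of multiplication by $y_1y_2$ is exactly what you say afterwards: the ring is a free $\CC[\mathbf{x}]/I$-module on the monomials in $y_1,y_2,z$, and $y_1y_2$ acts injectively on that basis. You should simply delete the ``domain'' parenthetical and keep the freeness argument.
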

\begin{proof}
By passing to the initial ideal one can assume that $I$ is a monomial ideal. The Hilbert function of $\CC[{\bf{x}},y_1,y_2,z]/I$ equals $\frac{1}{(1-t)^3}\HS'$, where $\frac{1}{(1-t)^3}=(1+t+t^2+\dots)^3$ counts the exponents of $y_1,y_2$ and $z$ in the monomial. We now have to subtract the monomials divisible by $y_1y_2$ obtaining:
\[\HS=\frac{1}{(1-t)^3}\HS'-\frac{t^2}{(1-t)^3}\HS'=\frac{1+t}{(1-t)^2}\HS'.\]
\end{proof}
The following theorem extends \cite[Corollary 4.5]{ohsugi2019h} from the case of two bipartite graphs to the case of a bipartite graph and an arbitrary graph.
\begin{theorem}\label{thm:hjoin}
Let $G_1$ be a connected graph and $G_2$ be a connected bipartite graph, and denote by $H_1$ and $H_2$ the $h^*$-polynomials of the algebras associated with the respective symmetric edge polytopes. Let $G$ be a graph obtained by joining $G_1$ and $G_2$ by an edge $e_0$.

Then the $h^*$-polynomial for $G$ equals $H_1 H_2/(1+t)$. In particular, this operation preserves real-rootedness of the $h^*$-polynomial.
\end{theorem}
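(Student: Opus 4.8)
The plan is to read the $h^*$-polynomial off the Hilbert series of the toric algebra, using that the $h^*$-polynomial of the (graded, degree-one-generated) toric algebra of a polytope is the numerator of its Hilbert series written over $(1-t)^{\dim+1}$, and that Hilbert series are invariant under passing to an initial ideal. So I would compute $\HS_{\mathcal{P}_G}$ from the initial ideal furnished by Proposition~\ref{prop:initialideals}: there are degrevlex orders $\prec,\prec_1,\prec_2$ with
\[\texttt{in}_{\prec}I_{\mathcal{P}_G} \;=\; \texttt{in}_{\prec_1}I_{\mathcal{P}_{G_1}} \;+\; \texttt{in}_{\prec_2}I_{\mathcal{P}_{G_2}},\]
the polynomial rings of $I_{\mathcal{P}_{G_1}}$ and $I_{\mathcal{P}_{G_2}}$ sharing exactly the variables $z, x_{e_0}, y_{e_0}$ attached to the origin and to the glued edge $e_0$.

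The crucial step is to rewrite this right-hand side as a sum of ideals living in polynomial rings on \emph{disjoint} sets of variables. Because $G_2$ is bipartite, Lemma~\ref{lemma:initial} applied with $e = e_0$ gives $\texttt{in}_{\prec_2}I_{\mathcal{P}_{G_2}} = (x_{e_0}y_{e_0}) + J_{G_2}$, where the generators of $J_{G_2}$ involve none of $z, x_{e_0}, y_{e_0}$; hence $J_{G_2}$ is an ideal of the polynomial ring $\CC[\mathbf{w}]$ on the $2(|E(G_2)|-1)$ variables attached to the edges of $G_2$ other than $e_0$, a set disjoint from all variables of $G_1$. On the other hand $x_{e_0}y_{e_0}$ is the leading term of the binomial $x_{e_0}y_{e_0}-z^2$ of Theorem~\ref{thm:GB}(3), so it already lies in $\texttt{in}_{\prec_1}I_{\mathcal{P}_{G_1}}$. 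Thus $\texttt{in}_{\prec}I_{\mathcal{P}_G} = \texttt{in}_{\prec_1}I_{\mathcal{P}_{G_1}} + J_{G_2}$, a sum of ideals in polynomial rings in disjoint variables whose union is $\CC[\mathbf{y}_G]$, so
\[\CC[\mathbf{y}_G]/\texttt{in}_{\prec}I_{\mathcal{P}_G} \;\cong\; \Bigl(\CC[\mathbf{y}_{G_1}]/\texttt{in}_{\prec_1}I_{\mathcal{P}_{G_1}}\Bigr)\otimes_{\CC}\Bigl(\CC[\mathbf{w}]/J_{G_2}\Bigr)\]
and Hilbert series multiply: $\HS_{\mathcal{P}_G} = \HS_{\mathcal{P}_{G_1}}\cdot\HS_{\CC[\mathbf{w}]/J_{G_2}}$.

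To finish, I would relate $\HS_{\CC[\mathbf{w}]/J_{G_2}}$ to $\HS_{\mathcal{P}_{G_2}}$ using Lemma~\ref{lem:hilbserfor} with $I = J_{G_2}\subset\CC[\mathbf{w}]$ and the three extra variables $x_{e_0},y_{e_0},z$: since $\CC[\mathbf{w},x_{e_0},y_{e_0},z]/(J_{G_2}+(x_{e_0}y_{e_0}))$ is exactly $\CC[\mathbf{y}_{G_2}]/\texttt{in}_{\prec_2}I_{\mathcal{P}_{G_2}}$, the lemma yields $\HS_{\mathcal{P}_{G_2}} = \tfrac{1+t}{(1-t)^2}\,\HS_{\CC[\mathbf{w}]/J_{G_2}}$. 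Substituting, writing $d_i=\dim\mathcal{P}_{G_i}=|V(G_i)|-1$ and $d=\dim\mathcal{P}_G=|V(G_1)|+|V(G_2)|-3$ (so $d+1=d_1+d_2$, since $G$ has $|V(G_1)|+|V(G_2)|-2$ vertices), and clearing denominators gives
\[\frac{H_G(t)}{(1-t)^{d+1}} \;=\; \frac{H_1(t)}{(1-t)^{d_1+1}}\cdot\frac{(1-t)^2}{1+t}\cdot\frac{H_2(t)}{(1-t)^{d_2+1}} \;=\; \frac{H_1(t)\,H_2(t)}{(1+t)(1-t)^{d+1}},\]
so $H_G=H_1H_2/(1+t)$, as claimed. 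For real-rootedness, note that $\HS_{\CC[\mathbf{w}]/J_{G_2}}$ is a polynomial divided by a power of $(1-t)$ while $H_2(1)\ne 0$, so the identity $\HS_{\mathcal{P}_{G_2}}=\tfrac{1+t}{(1-t)^2}\HS_{\CC[\mathbf{w}]/J_{G_2}}$ forces $(1+t)\mid H_2$; hence $H_1H_2/(1+t)=H_1\cdot(H_2/(1+t))$ is a product of two real-rooted polynomials whenever $H_1$ and $H_2$ are real-rooted, since removing the real root $-1$ keeps $H_2/(1+t)$ real-rooted.

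The step I expect to be the main obstacle is the tensor-product decomposition of the second paragraph, namely making sure that $J_{G_2}$ genuinely avoids \emph{all three} shared variables $z, x_{e_0}, y_{e_0}$ — which is precisely what Lemma~\ref{lemma:initial} provides, and is where bipartiteness of $G_2$ enters — and that the remaining overlap $x_{e_0}y_{e_0}$ can be absorbed into $\texttt{in}_{\prec_1}I_{\mathcal{P}_{G_1}}$. Once this is secured, the rest is routine bookkeeping of Hilbert series, pole orders, and dimensions, together with the elementary facts about real-rooted polynomials.
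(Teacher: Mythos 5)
Your argument is correct and follows essentially the same route as the paper: both proofs combine Proposition~\ref{prop:initialideals}, Lemma~\ref{lemma:initial}, and Lemma~\ref{lem:hilbserfor} to factor the Hilbert series, using that $J_{G_2}$ lives in variables disjoint from those of $G_1$ (with $x_{e_0}y_{e_0}$ absorbed into $\texttt{in}_{\prec_1}I_{\mathcal{P}_{G_1}}$ via Theorem~\ref{thm:GB}(3)). Your last paragraph also supplies a clean justification of the real-rootedness remark, which the paper states without proof.
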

\begin{proof}
Given any graph $G'$, we will denote by $\HS(G')$ the Hilbert series of the algebra associated with $\mathcal{P}_{G'}$, i.e.~$\CC[z,x_e,y_e \mid e\in E(G')] / I_{\mathcal{P}_{G'}}$. If $G'$ is connected, then $\HS(G') = H' / (1-t)^{|V(G')}$, where $H'$ is the $h^*$-polynomial.

Let $\prec$ be a term order as in Lemma \ref{lemma:initial}. Then 
\[\texttt{in}_{\prec}I_{\mathcal{P}_{G_2}} = (x_{e_0}y_{e_0}) + J_{G_2},\]
where the generators of the ideal $J_{G_2}$ do not involve $z$, $x_{e_0}$ nor $y_{e_0}$.

Our goal is to compute $\HS(G)$. To do so, recall that the monomials not in $\texttt{in}_{\prec}I_{\mathcal{P}_G}$ form a basis of $T / I_{\mathcal{P}_G}$, where $T = \CC[z,x_e,y_e \mid e\in E(G)]$. By combining Proposition \ref{prop:initialideals} and Lemma \ref{lemma:initial} we have that \[\texttt{in}_{\prec}I_{\mathcal{P}_G} = \texttt{in}_{\prec}I_{\mathcal{P}_{G_1}}T + J_{G_2}T.\] Since the generators of the monomial ideals $\texttt{in}_{\prec}I_{\mathcal{P}_{G_1}}$ and $J_{G_2}$ do not share any variable, every monomial in $T$ not in $\texttt{in}_{\prec}I_{\mathcal{P}_G}$ may be uniquely represented as the product of a monomial in $\CC[z,x_e,y_e \mid e\in E(G_1)]$ not in $\texttt{in}_{\prec}I_{\mathcal{P}_{G_1}}$ and a monomial in $R := \CC[x_e,y_e \mid e\in E(G_2) \setminus \{e_0\}]$ not in $J_{G_2}$. This implies that $\HS(G) = \HS(G_1)\HS(R / J_{G_2})$. Since by Lemma \ref{lem:hilbserfor} $\HS(R / J_{G_2}) = \HS(G_2)(1-t)^2/(1+t)$, we have that 
\[
\begin{split}
\HS(G) &= \HS(G_1)\HS(G_2)\frac{(1-t)^2}{1+t}\\
&= \frac{H_1H_2}{(1+t)(1-t)^{|V(G_1)| + |V(G_2)| - 2}}\\
&= \frac{H_1H_2}{(1+t)(1-t)^{|V(G)|}}.
\end{split}
\]
Multiplying both sides by $(1-t)^{|V(G)|}$ yields the claim.
\end{proof}

Let us recall that the $\gamma$-vector for a palindromic polynomial $f$ of degree $d$ is defined by the formula $f(x)=\sum_{i=0}^{\lfloor\frac{d}{2}\rfloor}\gamma_i t^i(1+t)^{d-2i}$.
For a graph $G$ we denote by $\gamma_G$ the $\gamma$-vector associated with the $h^*$-polynomial for the toric algebra over the symmetric edge polytope $\mathcal{P}_G$, in the sense of Section \ref{ssec:GB}.
Noting that the $\gamma$-vector remains invariant (up to attaching zeros at the end) after multiplying the polynomial by $(1+t)$, we obtain the following corollary.

\begin{corollary} \label{gamma_concat}
Using the notation of Theorem \ref{thm:hjoin}, the $\gamma$-vector for $G$ equals:
$$({\gamma_G})_i=\sum_{a+b=i}({\gamma_{G_1}})_a ({\gamma_{G_2}})_b.$$
\end{corollary}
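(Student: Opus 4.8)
The plan is to derive Corollary \ref{gamma_concat} directly from Theorem \ref{thm:hjoin}. By Theorem \ref{thm:hjoin} the $h^*$-polynomial of $\mathcal{P}_G$ equals $H_1H_2/(1+t)$, where $H_1$ and $H_2$ are the $h^*$-polynomials of $\mathcal{P}_{G_1}$ and $\mathcal{P}_{G_2}$. Since $\mathcal{P}_{G_1}$, $\mathcal{P}_{G_2}$ and $\mathcal{P}_G$ are all reflexive, $H_1$, $H_2$ and $H_1H_2/(1+t)$ are all palindromic, so each has a well-defined $\gamma$-vector. The first step is therefore purely formal: observe that if $p(t)$ is palindromic of degree $d$ with $\gamma$-vector $(\gamma_0,\dots,\gamma_{\lfloor d/2\rfloor})$, then $(1+t)p(t)$ is palindromic of degree $d+1$ and its $\gamma$-vector coincides with that of $p$ (up to appending a zero when $d$ is even, which does not affect the convolution). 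Indeed, writing $p(t)=\sum_i \gamma_i t^i(1+t)^{d-2i}$ and multiplying by $(1+t)$ gives $\sum_i \gamma_i t^i(1+t)^{(d+1)-2i}$, which is exactly the $\gamma$-expansion in degree $d+1$. Hence the $\gamma$-vector of $H_1H_2/(1+t)$ equals the $\gamma$-vector of $H_1H_2$.

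The second step is to compute the $\gamma$-vector of a product of palindromic polynomials. If $H_1(t)=\sum_a (\gamma_{G_1})_a t^a(1+t)^{d_1-2a}$ and $H_2(t)=\sum_b (\gamma_{G_2})_b t^b(1+t)^{d_2-2b}$, with $d_1=\deg H_1$ and $d_2=\deg H_2$, then
\[
H_1(t)H_2(t)=\sum_{a,b}(\gamma_{G_1})_a(\gamma_{G_2})_b\, t^{a+b}(1+t)^{(d_1+d_2)-2(a+b)}.
\]
Collecting terms with $i=a+b$ and noting that $d_1+d_2$ is the degree of $H_1H_2$ (both $H_1$ and $H_2$ have positive leading coefficient, so no cancellation occurs) shows that the $\gamma$-vector of $H_1H_2$ is the convolution $(\gamma_G)_i=\sum_{a+b=i}(\gamma_{G_1})_a(\gamma_{G_2})_b$. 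Combining with the first step, and recalling that by definition $\gamma_G$ is the $\gamma$-vector of the $h^*$-polynomial $H_1H_2/(1+t)$ of $\mathcal{P}_G$, gives the claimed formula.

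There is essentially no serious obstacle here; the content is entirely in Theorem \ref{thm:hjoin}, and Corollary \ref{gamma_concat} is a bookkeeping consequence. The one point that deserves a line of care is the degree count: one must check that $\deg(H_1H_2/(1+t))=d_1+d_2-1$ equals $|V(G)|-1$ (consistent with reflexivity of $\mathcal{P}_G$, which has dimension $|V(G)|-1$), and that dividing by $(1+t)$ genuinely leaves a polynomial — both of which are already guaranteed by Theorem \ref{thm:hjoin} and by Hibi's characterization of reflexive polytopes via palindromic $h^*$. The only mild subtlety is the parity caveat about appending a trailing zero to a $\gamma$-vector when passing between even and odd degree; since the convolution is unaffected by trailing zeros, this causes no trouble, but it is worth stating explicitly (as the sentence preceding the corollary in the excerpt already does).
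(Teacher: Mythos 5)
Your proof is correct and follows essentially the same route as the paper, which simply observes that multiplying by $(1+t)$ preserves the $\gamma$-vector (up to trailing zeros) and leaves the convolution step implicit; you have merely filled in the bookkeeping. One tiny slip: the trailing zero is appended when $d$ is \emph{odd}, not even (since $\lfloor d/2\rfloor = \lfloor (d+1)/2\rfloor$ exactly when $d$ is even), but as you note this has no bearing on the convolution.
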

As another corollary, we see that the joining of edges provides many examples satisfying the Nevo--Petersen conjecture.
\begin{corollary}\label{cor:nevo}
Let $G_1$ be a connected graph and let $G_2$ be a connected bipartite graph, for which the respective symmetric edge polytopes have unimodular flag triangulations satisfying the Nevo--Petersen conjecture. Let $G$
be the graph obtained from joining $G_1$ and $G_2$ by an edge. Then, any unimodular flag triangulation of the boundary of $\mathcal{P}_G$
satisfies the Nevo--Petersen conjecture.
\end{corollary}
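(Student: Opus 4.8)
The plan is to leverage Corollary \ref{gamma_concat} together with the combinatorial content of the Nevo--Petersen conjecture. Recall that the conjecture asserts that the $\gamma$-vector of a flag triangulation of a sphere is the $f$-vector of some balanced simplicial complex. By Corollary \ref{gamma_concat}, if $\Delta_1$ and $\Delta_2$ are unimodular flag triangulations of the boundaries of $\mathcal{P}_{G_1}$ and $\mathcal{P}_{G_2}$ realizing the $h^*$-polynomials $H_1$ and $H_2$, then the coefficients of the $\gamma$-vector $\gamma_G$ are given by the convolution $(\gamma_G)_i = \sum_{a+b=i}(\gamma_{G_1})_a(\gamma_{G_2})_b$. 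So the first step is to observe that $\gamma_G$ is the $f$-vector of a \emph{join} of two balanced complexes: if $\Gamma_1$ and $\Gamma_2$ are balanced simplicial complexes whose $f$-vectors are $\gamma_{G_1}$ and $\gamma_{G_2}$ (these exist by hypothesis), then the simplicial join $\Gamma_1 * \Gamma_2$ has $f$-vector equal to the convolution of the two $f$-vectors — precisely $\gamma_G$.

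The second step is to check that $\Gamma_1 * \Gamma_2$ is again balanced. This is standard: a proper coloring of $\Gamma_1$ with $\dim\Gamma_1 + 2$ colors and of $\Gamma_2$ with $\dim\Gamma_2 + 2$ colors, using disjoint color sets, gives a proper coloring of the join with $(\dim\Gamma_1 + 2) + (\dim\Gamma_2 + 2)$ colors; since faces of the join are disjoint unions of a face of $\Gamma_1$ and a face of $\Gamma_2$, no monochromatic edge is created, and the number of colors matches $\dim(\Gamma_1 * \Gamma_2) + 2$. Hence $\gamma_G$ is realized as the $f$-vector of a balanced complex.

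The third step is to argue that this conclusion holds for \emph{any} unimodular flag triangulation of the boundary of $\mathcal{P}_G$, not just the specific one coming from a Gr\"obner-basis construction. The key point is that the $\gamma$-vector depends only on the $h^*$-polynomial $H_1H_2/(1+t)$ of $\mathcal{P}_G$ (via Theorem \ref{thm:hjoin}), and the $h$-polynomial of any unimodular triangulation of a lattice polytope equals its $h^*$-polynomial; moreover, for a reflexive polytope this coincides with the $h$-polynomial of the induced boundary triangulation, which for a flag boundary triangulation of a sphere is palindromic and thus has a well-defined $\gamma$-vector. Therefore every unimodular flag boundary triangulation of $\mathcal{P}_G$ has the same $\gamma$-vector $\gamma_G$, and by the first two steps this is the $f$-vector of a balanced complex.

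The main obstacle I anticipate is the bookkeeping in the third step: one must be careful that the hypothesis "the respective symmetric edge polytopes have unimodular flag triangulations satisfying the Nevo--Petersen conjecture" is enough to guarantee balanced realizability of $\gamma_{G_1}$ and $\gamma_{G_2}$ \emph{as abstract vectors}, and that the Nevo--Petersen property for $\mathcal{P}_G$ — being a statement about $\gamma_G$ alone — does not actually depend on which triangulations of $\mathcal{P}_{G_1}, \mathcal{P}_{G_2}$ were chosen (different flag triangulations might a priori give different $\gamma$-vectors, but they cannot, since the $\gamma$-vector is determined by the $h^*$-polynomial). Once this invariance is pinned down, the join construction does all the remaining work, and flagness of the triangulation of $\mathcal{P}_G$ is only needed to ensure that talking about its $\gamma$-vector (and the Nevo--Petersen conjecture) makes sense in the first place.
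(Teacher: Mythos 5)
Your proposal is correct and follows essentially the same route as the paper: realize $\gamma_{G_1}$ and $\gamma_{G_2}$ as $f$-vectors of balanced complexes $\Gamma_1, \Gamma_2$, take the simplicial join (observing it stays balanced via disjoint color sets), note that its $f$-vector is the convolution matching Corollary~\ref{gamma_concat}, and conclude. Your third step — spelling out that the $\gamma$-vector is determined by the $h^*$-polynomial and hence is the same for every unimodular flag boundary triangulation of $\mathcal{P}_G$ — is a helpful explicit justification of a point the paper leaves implicit.
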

\begin{proof}
By hypothesis there exist $\Delta_1$ and $\Delta_2$ balanced simplicial complexes with $f$-vectors given respectively by $\gamma_{G_1}$ and $\gamma_{G_2}$. Consider the simplicial join $\Delta_1 * \Delta_2$, which is again balanced (since we can pick the coloring induced by those of $\Delta_1$ and $\Delta_2$, assuming the two sets of colors used are disjoint). The $f$-vector of $\Delta_1 * \Delta_2$ is by construction the concatenation of the original $f$-vectors. Applying Corollary \ref{gamma_concat} completes the proof.
\end{proof}
\begin{corollary}
Let $G$ be a graph obtained by successively connecting complete bipartite graphs by an edge. Then the $\gamma$-vector associated to $\mathcal{P}_G$ satisfies the Nevo--Petersen conjecture. 
\end{corollary}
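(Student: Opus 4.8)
The plan is to argue by induction on the number $r$ of complete bipartite blocks out of which $G$ is assembled, the inductive step being a direct application of Corollary~\ref{cor:nevo}. The only ingredient that is not purely formal is the base case $r=1$, i.e.\ the assertion that $\mathcal{P}_{K_{m,n}}$ admits a unimodular flag triangulation whose $\gamma$-vector is the $f$-vector of a balanced simplicial complex. I expect this base case to be the main obstacle and would treat it separately; everything else is bookkeeping.

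For the inductive step, write $G$ as the graph obtained by identifying an edge of a connected graph $G_1$ --- the union of the first $r-1$ blocks, which is again a graph of the form covered by the statement, and which is bipartite since it is built by gluing bipartite graphs along edges --- with an edge of a complete bipartite graph $G_2$. By the inductive hypothesis $\mathcal{P}_{G_1}$ carries a unimodular flag triangulation satisfying Nevo--Petersen, and by the base case so does $\mathcal{P}_{G_2}$. Since $G_2$ is bipartite and $G_1,G_2$ are connected, Corollary~\ref{cor:nevo} yields that any unimodular flag triangulation of $\partial\mathcal{P}_G$ satisfies Nevo--Petersen; concretely, by Corollary~\ref{gamma_concat} the vector $(\gamma_G)_i = \sum_{a+b=i}(\gamma_{G_1})_a(\gamma_{G_2})_b$ is the $f$-vector of the simplicial join of the two balanced complexes realizing $\gamma_{G_1}$ and $\gamma_{G_2}$, which is again balanced once the two palettes of colors are chosen disjoint. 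To keep the induction self-contained one should also verify that $\mathcal{P}_G$ genuinely \emph{has} a unimodular flag triangulation: since $G_1$ is bipartite, Proposition~\ref{prop:initialideals} expresses the relevant initial ideal of $I_{\mathcal{P}_G}$ as the sum of those of $I_{\mathcal{P}_{G_1}}$ and $I_{\mathcal{P}_{G_2}}$, hence quadratically generated and squarefree provided the two summands are --- a property one therefore carries along in the induction alongside Nevo--Petersen.

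The hard part is the base case for $K_{m,n}$, where two things must be produced: a flag unimodular triangulation of $\mathcal{P}_{K_{m,n}}$ --- not automatic from Theorem~\ref{thm:GB}, since the cubic and higher-degree binomials coming from cycles of length $\geq 6$ in $K_{m,n}$ need not be redundant modulo the quadratic generators --- and the identification of its $\gamma$-vector with the $f$-vector of an explicit balanced complex. For small parameters this is immediate: $K_{1,n}$ is a tree, so $\mathcal{P}_{K_{1,n}}$ is a cross-polytope with $h^*$-polynomial $(1+t)^n$ and $\gamma$-vector $(1,0,\dots,0)$, the $f$-vector of a point; and for $K_{2,2}=C_4$ one gets $h^* = 1+5t+5t^2+t^3$, hence $\gamma=(1,2)$, the $f$-vector of two points. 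In general one should exhibit a term order giving a quadratic squarefree initial ideal of $I_{\mathcal{P}_{K_{m,n}}}$ together with a combinatorial reading of the resulting $h^*$-polynomial, from which a balanced complex with the prescribed $f$-vector can be built by hand; this is the step I expect to require the real effort.
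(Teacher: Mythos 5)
Your overall structure---induction on the number of blocks with Corollary~\ref{cor:nevo} powering the inductive step---is exactly the paper's. The bookkeeping you carry along (that the union of the first $r-1$ blocks is itself connected and bipartite, and that one needs to keep track of the existence of a unimodular flag triangulation via Proposition~\ref{prop:initialideals}) is sound and matches what the argument implicitly requires.

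The genuine gap is the base case, and you identified it correctly as the crux: you would need to show that $\mathcal{P}_{K_{m,n}}$ admits a unimodular flag triangulation whose $\gamma$-vector is the $f$-vector of a balanced complex, and you observe (rightly) that Theorem~\ref{thm:GB} by itself does not obviously give a \emph{quadratic} squarefree initial ideal for $K_{m,n}$, since the Gr\"obner basis elements coming from cycles of length $\geq 6$ have degree $\geq 3$. You stop short of producing such a triangulation. The paper closes this gap by citation, not by a new argument: the base case is precisely \cite[Theorem 4.2]{higashitani2019arithmetic}, which establishes that for any complete bipartite graph $K_{m,n}$ the symmetric edge polytope has a flag unimodular triangulation whose $\gamma$-vector satisfies the Nevo--Petersen conjecture. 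So your sketch of what needs proving is accurate, and your small-parameter computations ($K_{1,n}$ giving $\gamma=(1,0,\dots,0)$ and $K_{2,2}=C_4$ giving $\gamma=(1,2)$) are correct sanity checks, but the actual construction for general $K_{m,n}$ is carried out in that reference rather than in the present paper. With that citation in place, your inductive argument completes the proof exactly as intended.
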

\begin{proof}
The case of one complete bipartite graph is precisely \cite[Theorem 4.2]{higashitani2019arithmetic}. Induction follows by Corollary \ref{cor:nevo}.
\end{proof}
When joining together two non-bipartite graphs by an edge, the situation is different. We include here as a case study the computation of the normalized volume of the symmetric edge polytope associated with the graph obtained by joining two odd cycles by an edge.

\begin{proposition} \label{prop:join_odd_cycles}
Let $G$ be the graph obtained by joining by an edge two odd cycles $C$ and $C'$ of respective lengths $2i+1$ and $2j+1$. Then $\vol(\mathcal{P}_G) = (i+j+2ij)\binom{2i}{i}\binom{2j}{j}$.
\end{proposition}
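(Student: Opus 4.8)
The plan is to run the general volume-counting strategy of Section~\ref{ssec:GB} by hand; neither Theorem~\ref{thm:hjoin} nor Proposition~\ref{prop:initialideals} applies here, since both $C$ and $C'$ are non-bipartite. Write $e_0=\{u,u'\}$ for the identified edge. Then $G$ is a theta-graph: $u$ and $u'$ are joined by three internally disjoint paths, namely $e_0$ itself (length $1$), the path $P_1=C\setminus e_0$ (length $2i$) and the path $P_2=C'\setminus e_0$ (length $2j$); in particular the only cycles of $G$ are $C$, $C'$ and the even cycle $D:=P_1\cup P_2$ of length $2i+2j$. By Theorem~\ref{thm:facets}, Corollary~\ref{cor:simpinfacet} and Theorem~\ref{thm:GB} we have $\vol(\mathcal P_G)=\sum_F\vol(F)$, the sum ranging over facets $F$ of $\mathcal P_G$, where $\vol(F)$ is the number of spanning trees $T$ of the oriented graph $G_F$ (Notation~\ref{not:G_F}) whose monomial $\prod_{e\in T}p_e$ is not divisible by any leading term in the Gr\"obner basis of Theorem~\ref{thm:GB}.

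First I would classify the facets via Theorem~\ref{thm:facets}. Normalising $f(u)=0$, the edge $e_0$ forces $f(u')=c\in\{-1,0,1\}$. If $c=0$ then $e_0$ carries no variation, so the spanning connected subgraph $E_f$ lies in the cycle $D$ and is therefore either $D$ or $D$ minus one edge; the latter is a spanning path whose $2i+2j-1$ unit increments would have to sum to zero, which is impossible, so $E_f=D$, i.e.\ all edges of $D$ carry variation. Since $f$ returns to $0$ along both $P_1$ and $P_2$, this gives $i$ up- and $i$ down-steps on $P_1$ and $j$ up- and $j$ down-steps on $P_2$, hence $\binom{2i}{i}\binom{2j}{j}$ facets, in each of which $G_F=D$ (with an induced orientation). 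If $c=1$ (the case $c=-1$ being symmetric via $f\mapsto-f$), then $e_0\in E_f$, and a parity-plus-connectivity argument shows each of $P_1,P_2$ contains exactly one flat edge: two flat edges on a path would disconnect $E_f$, and with no flat edge the $2i$ (resp.\ $2j$) unit increments cannot sum to $1$. Choosing the flat edge and then $i$ up- and $i-1$ down-steps among the remaining $2i-1$ edges of $P_1$ gives $2i\binom{2i-1}{i}=i\binom{2i}{i}$ possibilities for $P_1$ and likewise $j\binom{2j}{j}$ for $P_2$; so there are $ij\binom{2i}{i}\binom{2j}{j}$ facets with $c=1$ and as many with $c=-1$, and in each such facet $G_F$ is $G$ with one edge removed from each of $P_1$ and $P_2$ --- a spanning tree of $G$.

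Next I would compute $\vol(F)$. When $c=\pm1$, $G_F$ is already a tree, so its only spanning tree is itself, and I would check that $\prod_{e\in G_F}p_e$ is standard: for any of the cycles $C$, $C'$, $D$ and any of its two cyclic orientations, the orientation of $G_F$ agrees with that cyclic orientation on at most $i$ edges of $C$, at most $j$ edges of $C'$, and at most $i+j-1$ edges of $D$ --- strictly below the $i+1$, $j+1$, $i+j$ required by a leading term of type~(2), (2), (1) respectively --- so no leading term divides it and $\vol(F)=1$. When $c=0$, a spanning tree of $G_F=D$ is $D$ minus one edge; the leading terms coming from $C$ or $C'$ still cannot divide its monomial, because $e_0\notin D$ and so at most $i$ (resp.\ $j$) of its edges can agree with a cyclic orientation of $C$ (resp.\ $C'$), while $i+1$ (resp.\ $j+1$) would be needed. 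Thus the surviving spanning trees are exactly those surviving the type-(1) relations of the single even cycle $D$, and their number is the volume of a facet of $\mathcal P_{C_{2i+2j}}$, which equals $i+j$ by the computation in Section~\ref{ssec:2kcyc}. Summing everything, $\vol(\mathcal P_G)=(i+j)\binom{2i}{i}\binom{2j}{j}+2ij\binom{2i}{i}\binom{2j}{j}=(i+j+2ij)\binom{2i}{i}\binom{2j}{j}$.

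The part I expect to be delicate is the orientation bookkeeping in the last step: for each of $C$, $C'$, $D$ one must compare the orientation that the facet labeling $f$ puts on its edges with each of the two cyclic orientations of that cycle, and carefully count agreements to see that they never reach the threshold of Theorem~\ref{thm:GB}. The connectivity analysis forcing exactly one flat edge on each of $P_1,P_2$ when $c=\pm1$ (and none when $c=0$) also needs some care, while the binomial simplifications such as $2i\binom{2i-1}{i}=i\binom{2i}{i}$ are routine.
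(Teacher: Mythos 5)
Your proof is correct and follows essentially the same route as the paper: split into the cases $f(u')=0$ and $f(u')=\pm 1$ of the facet-defining labeling, count facets in each case (getting $\binom{2i}{i}\binom{2j}{j}$ and $2ij\binom{2i}{i}\binom{2j}{j}$ respectively), and observe that each facet in the second case is a simplex while each facet in the first case triangulates into $i+j$ simplices via the even cycle $D=C\cup C'\setminus\{e_0\}$. Your Gr\"obner-basis bookkeeping in the $c=\pm 1$ case is a harmless extra: there $G_F$ is a spanning tree, so $F$ is a simplex and automatically has volume $1$, as the paper notes directly.
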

\begin{proof}
Call $e = \{v_1, v_2\}$ the common edge. Let $F$ be the facet associated with the labeling $f$ as in Theorem \ref{thm:facets}. Without loss of generality, let $f(v_1) = 0$. In what follows we will say that a directed edge $w_1 \to w_2$ is \emph{ascending} (respectively \emph{descending}, \emph{constant}) when $f(w_2) - f(w_1) = 1$ (respectively $-1$, $0$). We choose directions for the edges of $G$ as in Figure \ref{odd_cycles} below.

\begin{figure}[h]
\centering
\includegraphics[trim=0 4cm 0 4cm, scale=0.3]{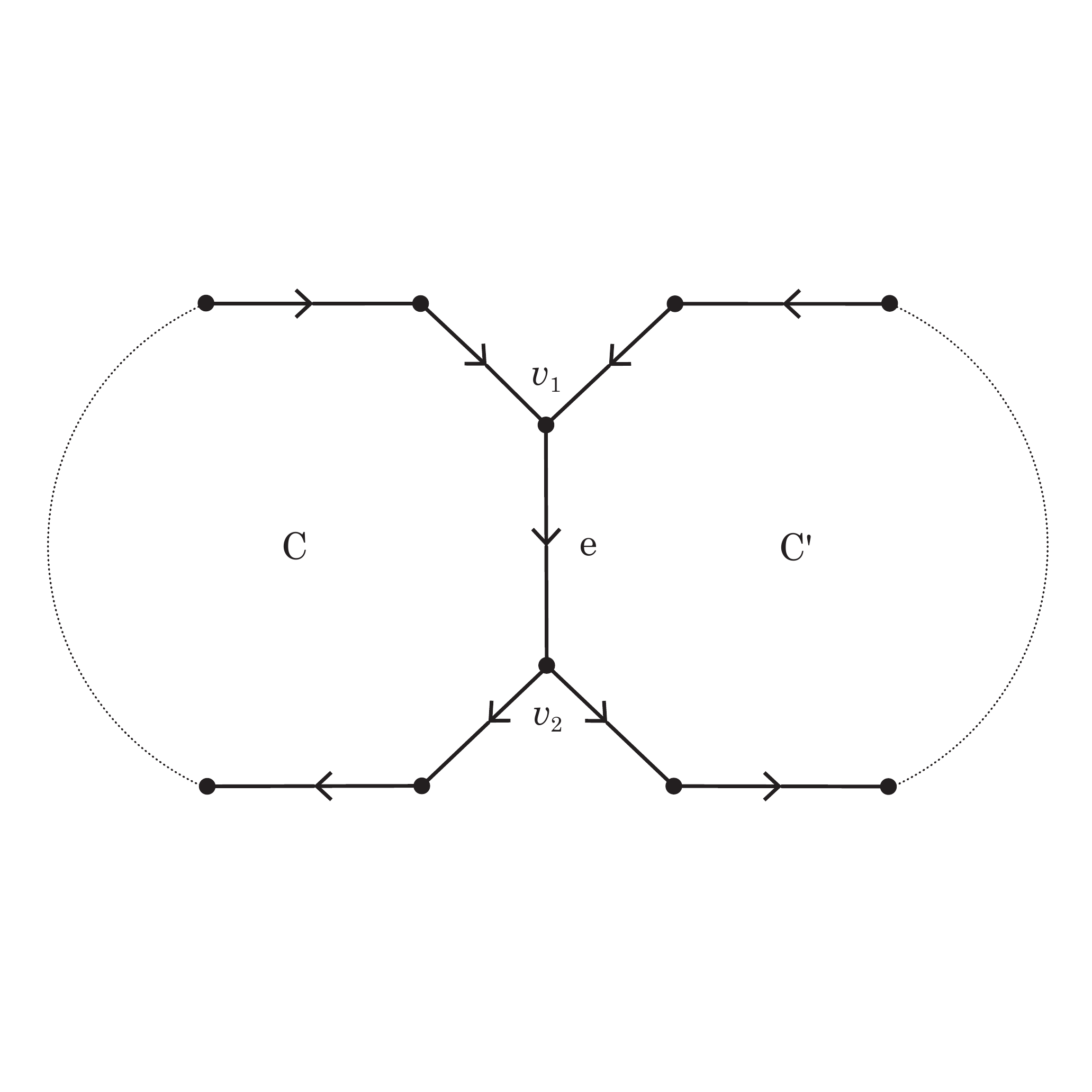}
\caption{Two odd cycles joined by an edge}
\label{odd_cycles}
\end{figure}
 Two possibilities can arise.

\begin{itemize}

\item Assume $f(v_2) = 0$. Then there need to be as many ascending edges as descending ones in $C \setminus \{e\}$ (respectively, $C' \setminus \{e\}$). Moreover, the number of ascending edges must be precisely $i$ (respectively, $j$), as otherwise $G_F$ would either be disconnected or not contain all the vertices. Since the choices on the two cycles are independent, there are $\binom{2i}{i}\binom{2j}{j}$ facets of this type. The simplices contained in such a facet are obtained by taking out a single edge in $C \cup C' \setminus \{e\}$. Since both the edge sets of $C \setminus \{e\}$ and  of $C' \setminus \{e\}$ are equally divided into the two possible orientations, the odd cycles do not impose any condition on the triangulation. However, the existence of the even cycle $C \cup C' \setminus \{e\}$ dictates that we cannot select at the same time all the $i+j$ edges with the opposite orientation as the smallest. This means that the facet $F$ is triangulated into the $i+j$ simplices obtained by taking out one such edge at a time.

\item If $f(v_2) \neq 0$, then $f(v_2) \in \{1, -1\}$. Reasoning in a similar fashion as in the previous case, in order for $G_F$ to contain a spanning tree we need $C$ (respectively, $C'$) to contain $i$ (respectively, $j$) ascending edges, $i$ (respectively, $j$) descending edges and a constant edge $c$ (respectively, $c'$), which cannot be the common edge $e$ by hypothesis. The number of possible facets is then $2i \cdot 2j \cdot 2 \binom{2i-1}{i-1} \binom{2j-1}{j-1}$.
The number $2 \binom{2i-1}{i-1} \binom{2j-1}{j-1}$ is found as follows: after choosing whether $e$ is ascending or descending, we still have to make $i-1$ choices in $C \setminus \{c, e\}$ and $j-1$ independent ones in $C' \setminus \{c', e\}$.
Note that in this case $G_F$ is actually a spanning tree and thus the facet $F$ is a simplex.
\end{itemize}
The total volume is hence \[(i+j)\binom{2i}{i}\binom{2j}{j} + 2 \cdot 2i \cdot 2j \binom{2i-1}{i-1} \binom{2j-1}{j-1} = (i+j+2ij) \binom{2i}{i} \binom{2j}{j}.\]
\end{proof}

\subsection{Wheel graphs}\label{ssec:wheel}
A \emph{wheel graph} is obtained by connecting each vertex of the cycle graph on $n$ vertices (with $n \geq 3$) to a single extra vertex, which we will call cone vertex. Due to conflicting conventions in the literature on whether the graph just described should be called $W_n$ or $W_{n+1}$, we will adopt the notation $K_1 * C_n$.

The goal of this subsection is to compute the number of facets and the normalized volume of the associated symmetric edge polytope.

\begin{proposition} \label{wheel_facet_count}
Denote by $a_n$ the number of facets of $\mathcal{P}_{K_1 * C_n}$. Then the rational generating function for $(a_n)_{n \geq 3}$ is 
\begin{equation} \label{eq:genfunction}
\frac{2z^6+2z^5-7z^4-3z^3+z+1}{(1-z)(1-z-2z^2-2z^3)}.
\end{equation}

In particular, the following recursion holds for $n \geq 3$:
\[a_n = 2a_{n-1} + a_{n-2} - 2a_{n-4}.\]
\end{proposition}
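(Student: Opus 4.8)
The plan is to apply Theorem~\ref{thm:facets} to describe the facets of $\mathcal{P}_{K_1 * C_n}$ combinatorially and then encode them as cyclic words over a finite alphabet, so that the Goulden--Jackson cluster method of Section~\ref{ssec:words} produces the generating function. Fix the labeling $f\colon V(K_1*C_n)\to\ZZ$ with $f(\text{cone vertex})=0$. By Theorem~\ref{thm:facets}(i) each rim vertex $v_i$ then satisfies $f(v_i)\in\{-1,0,1\}$, so the restriction of $f$ to the rim is a \emph{cyclic word} $w\in\{+,0,-\}^n$ (the letter records whether $f(v_i)$ is $1$, $0$ or $-1$). Condition (i) applied to the rim edges $\{v_i,v_{i+1}\}$ forbids the cyclic factors $+-$ and $-+$. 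It remains to translate condition (ii) — that $E_f$ is spanning and connected — into further forbidden cyclic factors. The spanning part is automatic once every vertex is incident to some edge of $E_f$; the only way a vertex can fail this is if a rim vertex labeled $0$ sits between two rim neighbors also labeled $0$ (then its two rim edges are constant and its spoke to the cone vertex is constant), which forbids the cyclic factor $000$. Connectedness is the delicate point and I expect it to be the main obstacle: once no $000$ occurs, every maximal run of $0$'s on the rim has length $\le 2$ and is flanked by a $+$ or a $-$, so each such run attaches (through a spoke that now has $|f|$-difference $1$, or through its flanking rim edge) to the rest; one must check carefully that the resulting subgraph $E_f$ is actually connected, handling separately the all-zero rim (excluded by $000$ for $n\ge 3$… but $n=3,4$ need a direct check) and configurations where every rim letter is $0$ between nonzero ones. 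After this reduction, the facets of $\mathcal{P}_{K_1*C_n}$ are in bijection with cyclic words of length $n$ over $\{+,0,-\}$ avoiding the factors $+-$, $-+$, $000$ — precisely the running example of Section~\ref{ssec:words}, whose cyclic generating function is
\[
1 + 3z + 7z^2 + 14z^3 + 26z^4 + 62z^5 + 138z^6 + 310z^7 + \ldots\,;
\]
but the coefficient of $z^n$ counts marked cyclic words, whereas facets correspond to unmarked ones, and moreover a small correction is needed because not every such word gives a \emph{distinct} facet and the small cases $n=3,4$ behave specially, which accounts for the discrepancy between the clean series above and the claimed rational function~\eqref{eq:genfunction}. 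I would reconcile these by computing $a_3,a_4,a_5,a_6$ by hand (using Corollary~\ref{cor:numfac}-style direct enumeration as a check) and matching them against the Edlin--Zeilberger output, adjusting by a polynomial correction term supported in low degree.

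Concretely, the steps are: (1) set up the labeling and the $\{+,0,-\}$-word encoding; (2) prove the equivalence ``$f$ facet-defining $\iff$ cyclic word avoids $+-,-+,000$ (plus finitely many sporadic exceptions for small $n$)'', with the connectedness verification being the crux; (3) invoke the cyclic Goulden--Jackson computation from the running example to get the generating function for the count of such words; (4) account for marking/unmarking and small-$n$ corrections to land on~\eqref{eq:genfunction}; (5) read off the recursion $a_n = 2a_{n-1}+a_{n-2}-2a_{n-4}$ from the denominator $(1-z)(1-z-2z^2-2z^3)$, whose expansion is $1 - 2z - z^2 + 0\cdot z^3 + 2z^4$, valid once $n$ is large enough that the numerator no longer interferes (i.e.\ $n\ge 7$, then checked to extend down to $n\ge 3$ against the initial values).

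The hard part will be Step~(2), specifically proving that avoiding $000$ (together with $+-,-+$) is not only necessary but \emph{sufficient} for $E_f$ to be connected and spanning: one needs a clean argument that any two rim vertices are joined within $E_f$, routing paths either along nonconstant rim edges or detouring through the cone vertex via two spokes whose endpoints are labeled $\pm1$. A convenient way to organize this is to observe that $E_f$ always contains the ``star'' of all spokes incident to nonzero-labeled rim vertices, so it suffices that (a) at least one rim vertex is nonzero and (b) every zero-labeled rim vertex is connected to a nonzero one inside $E_f$ — and (b) follows because a zero run of length $1$ or $2$ is flanked by nonzero letters and the flanking rim edge (difference $1$) lies in $E_f$. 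The global case ``all rim labels zero'' is impossible for $n\ge 3$ precisely because it contains $000$ when $n\ge 3$, so no genuine exception survives beyond bookkeeping of whether the empty/degenerate labelings were double-counted — which is exactly what the low-order terms of the numerator in~\eqref{eq:genfunction} encode.
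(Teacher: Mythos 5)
Your overall plan is the same as the paper's: encode each facet-defining labeling $f$ (normalized so $f(\text{cone vertex})=0$) as a word in $\{+,0,-\}^n$, observe that Theorem~\ref{thm:facets}(i) forbids the cyclic factors $+-$ and $-+$ while Theorem~\ref{thm:facets}(ii) forbids $000$, and then invoke the Edlin--Zeilberger computation from the running example of Section~\ref{ssec:words}. Your sketch of why the three forbidden patterns are also \emph{sufficient} — nonzero rim vertices attach to the cone through their spokes, zero rim vertices attach through a flanking rim edge, and an all-zero rim is excluded by $000$ for $n\ge 3$ — is the genuine content of the bijection and is correctly argued; the paper itself glosses over sufficiency.

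However, Step~(4) of your plan is based on a misconception and, if carried out as written, would damage the proof. You assert that the Edlin--Zeilberger output counts ``marked'' cyclic words whereas facets correspond to ``unmarked'' ones, so a low-degree correction is needed. This is wrong on both counts. First, the cyclic Goulden--Jackson method as set up in Section~\ref{ssec:words} explicitly counts words ``where we still remember where the word starts'' — i.e.\ necklaces with a distinguished starting position, not words up to rotation. Second, that is precisely what you want: the rim vertices of $K_1*C_n$ are labeled vertices of a fixed graph, so two labelings that differ by a cyclic shift define \emph{different} facet-determining functions $f$ and hence different facets. There is therefore a clean bijection between facets and the cyclic words already counted in the running example, with no over- or under-counting and no sporadic exceptions for small $n$ (the $n=3$ count of $14$ matches the known $2^4-2=14$ facets of $\mathcal{P}_{K_4}$). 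Concretely, combining the running example's output
\[
-3 + s + s^2 + \frac{-2s^2-6s+4}{2s^4-s^2-2s+1}
\]
over the common denominator $2s^4-s^2-2s+1 = (1-s)(1-s-2s^2-2s^3)$ gives
\[
\frac{2s^6+2s^5-7s^4-3s^3+s+1}{(1-s)(1-s-2s^2-2s^3)},
\]
which is exactly~\eqref{eq:genfunction}. No ``polynomial correction supported in low degree'' exists to be found; adding one would make the formula incorrect. Finally, your observation that the recursion read off the denominator is only automatic once $n$ exceeds the numerator degree (so $n\ge 7$) is correct and worth keeping; the statement's ``$n\ge 3$'' should be verified rather than assumed from the outset.
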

\begin{proof}
We use the notation of Theorem \ref{thm:facets} to describe the facets via appropriate functions $f\colon V(G) \to \ZZ$. Without loss of generality, we assign the value zero to the cone vertex. By condition (i) of Theorem \ref{thm:facets} it then follows that the external vertices can be labeled only by $-1$, $0$ or $1$ (from now on, ``$-$'', ``$0$'' and ``$+$''). We claim that the facets of $\mathcal{P}_{K_1 * C_n}$ are in bijection with the labelings of the external vertices by $\{+,0,-\}$ such that no two consecutive vertices are marked with ``$+-$'' or ``$-+$'' (as this would violate condition (i) in Theorem \ref{thm:facets}) and no three consecutive vertices are marked with ``$000$'' (which would go against condition (ii) in Theorem \ref{thm:facets}). The rational generating function for this counting problem was computed in the third part of Example \ref{running_example}.
\end{proof}
In order to compute the normalized volume we need more work. Let us fix some notation.

When considering facets of the symmetric edge polytope associated with a wheel graph, we will always label by zero the cone vertex. Then, after choosing a vertex on the outer cycle and a direction, each facet is identified by an $n$-letter word $\mathbf{w}$ in the alphabet $\{+, 0, -\}$. More precisely, given such a word $\mathbf{w}$, the associated labeling $f_\mathbf{w}$ gives value $1$ (resp. $-1$) to the outer vertices indexed with ``$+$'' (resp.~ ``$-$'') and value $0$ to both the cone vertex and those outer vertices that are indexed by ``$0$''. If $F_{\mathbf{w}}$ is the facet associated with the labeling $f_\mathbf{w}$, we will denote $G_{F_{\mathbf{w}}}$ by $G_\mathbf{w}$.

Moreover, we will denote by $c(\mathbf{w})$ the number of $4$-cycles in $G_{\mathbf{w}}$ containing the cone vertex.
\begin{proposition} \label{wheel_facet_volume}
Let $F_{\mathbf{w}}$ be the facet of $\mathcal{P}_{K_1 * C_n}$ identified by the word $\mathbf{w}$. Then 
\begin{equation*}\vol(F_{\mathbf{w}}) = 
\begin{cases}
2^{c(\mathbf{w})}-1 & \textrm{if } n \textrm{ is even and } \mathbf{w} \in \mathrm{Exc}_n\\
2^{c(\mathbf{w})} & \textrm{otherwise},
\end{cases}
\end{equation*}
where, if $n$ is even, $\mathrm{Exc}_n$ is the set consisting of the four $n$-letter words ${+}{0}{+}{0}\ldots{+}{0}$, ${0}{+}{0}{+}\ldots{0}{+}$, ${-}{0}{-}{0}\ldots{-}{0}$ and ${0}{-}{0}{-}\ldots{0}{-}$.
\end{proposition}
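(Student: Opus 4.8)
The plan is to fix a word $\mathbf{w}$ and analyze the simplices in the facet $F_\mathbf{w}$ via Corollary \ref{cor:simpinfacet}: these correspond to spanning trees of $G_\mathbf{w}$, and by the Gröbner basis description in Theorem \ref{thm:GB} we must count those spanning trees whose monomial of directed edges is not divisible by any leading term of a Gröbner basis element. The volume of $F_\mathbf{w}$ is then exactly this count. So the first step is to understand the structure of $G_\mathbf{w}$: its edges are the directed edges $(u,v)$ of $K_1 * C_n$ with $f_\mathbf{w}(v)-f_\mathbf{w}(u)=1$. A spoke between the cone vertex and an outer vertex labeled $+$ is ascending (oriented into the $+$ vertex), a spoke to a $-$ vertex is descending (oriented out of it), and spokes to $0$-vertices are constant, hence absent from $G_\mathbf{w}$. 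An outer edge between two consecutive outer vertices is present iff their labels differ by exactly one, i.e.\ one is $0$ and the other is $\pm$. The cycles of $K_1*C_n$ that matter for Theorem \ref{thm:GB} are: the many $3$-cycles (cone vertex plus an edge of $C_n$), a handful of $4$-cycles through the cone vertex (cone vertex, a $0$-vertex, its $\pm$-neighbor, ... — actually cone plus two consecutive outer edges, i.e.\ three consecutive outer vertices with the middle one forced appropriately), longer cycles through the cone vertex using two spokes and an outer arc, and the single large $n$-cycle $C_n$ itself.

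The second step is to see which of these cycles actually impose a nontrivial constraint on the spanning trees of $G_\mathbf{w}$. For a $(2k+1)$-cycle $C$, Theorem \ref{thm:GB}(2) gives leading monomials $\prod_{e\in J}p_e$ over $(k+1)$-subsets $J$ of the ascending edges of $C$; such a monomial divides the edge-monomial of a spanning tree $T$ precisely when $T$ contains $k+1$ ascending edges of $C$. But a spanning tree restricted to a cycle on $2k+1$ vertices omits at least one edge, so it contains at most $2k$ of the $2k+1$ edges; among those, the number of ascending ones is at most... one must check that for the relevant odd cycles (the $3$-cycles and the odd cycles through the cone), the orientation pattern forces fewer than $k+1$ ascending edges in any spanning tree, so these cycles give no constraint. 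The $3$-cycles (cone $+$ one outer edge): an outer edge present in $G_\mathbf{w}$ goes between a $0$ and a $\pm$; the two spokes of the triangle are one constant (absent) and one present, so the triangle as a subgraph of $G_\mathbf{w}$ has at most $2$ edges — never a spanning subgraph on its $3$ vertices with $2$ ascending edges in the forbidden sense. So $3$-cycles contribute nothing. By a similar count one argues the only cycles that genuinely constrain the count are (a) the $4$-cycles through the cone vertex counted by $c(\mathbf{w})$, and (b) the full outer cycle $C_n$ when $n$ is even. This is the heart of the argument and I expect it to be the main obstacle: one has to carefully enumerate the cycle types in $K_1 * C_n$, intersect each with $G_\mathbf{w}$, and verify the pigeonhole-style bounds showing all but these two families are vacuous.

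The third step is the local contribution of the $4$-cycles. Each $4$-cycle $Q$ through the cone counted by $c(\mathbf{w})$ arises from a pattern like $+0+$ or $-0-$ (three consecutive outer vertices $u,m,v$ with $f(u)=f(v)=\pm1$, $f(m)=0$): then $G_\mathbf{w}$ contains the two spokes to $u$ and $v$ and the two outer edges $um$ and $mv$, forming an oriented $4$-cycle. By Theorem \ref{thm:GB}(1), with $k=2$, the leading monomial is the product of the two ascending edges of $Q$ not involving the smallest edge; a spanning tree is forbidden iff it contains both those edges, i.e.\ iff it contains a specific pair among the four edges of $Q$. Locally this means: of the four ways to delete one edge from $Q$ to get a spanning path on its four vertices, exactly one is forbidden, so $3$ of $4$ local choices survive — but since the surviving structure must still be a spanning tree of all of $G_\mathbf{w}$, and these $4$-cycles are ``attached'' to the rest of $G_\mathbf{w}$ only through the cone vertex and endpoints in a way that makes the choices independent, a clean count gives a factor related to each $4$-cycle. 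Working this out, the number of admissible spanning trees is $\prod_{Q} (\text{something}) $; tracking the combinatorics carefully (each $4$-cycle contributes a doubling, with the ``$-1$'' corrections interacting) should yield $2^{c(\mathbf{w})}$ in general. Then, step four: when $n$ is even, the full even cycle $C_n$ also contributes a constraint via Theorem \ref{thm:GB}(1) with $k=n/2$, forbidding the spanning tree that selects all $n/2$ outer edges with the orientation opposite to the smallest. This extra forbidden tree exists and is otherwise admissible precisely when the word $\mathbf{w}$ is one of the four alternating words in $\mathrm{Exc}_n$ (so that $G_\mathbf{w}$ actually contains all $n$ outer edges, $n/2$ in each orientation, and the $c(\mathbf{w})=n/2$ four-cycles are arranged so that this particular all-outer-edges tree is counted once in the $2^{c(\mathbf{w})}$); subtracting it gives $2^{c(\mathbf{w})}-1$ in those cases and $2^{c(\mathbf{w})}$ otherwise. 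Assembling these gives the stated formula.
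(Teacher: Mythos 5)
Your overall plan is the same as the paper's: restrict the regular unimodular triangulation of $\mathcal{P}_{K_1*C_n}$ (coming from the degrevlex Gr\"obner basis of Theorem \ref{thm:GB}) to the facet $F_{\mathbf{w}}$, view its simplices as spanning trees of $G_{\mathbf{w}}$ via Corollary \ref{cor:simpinfacet}, and count the spanning trees whose edge-monomial avoids all the leading terms.  You also correctly observe that triangles through the cone vertex contribute nothing, that the live obstructions come from the $4$-cycles through the cone and (when $n$ is even) possibly from the outer $n$-cycle, and you correctly flag that longer cycles must be shown to give redundant monomials.  So far this matches the paper.

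However, the part you yourself call ``the heart of the argument'' is where the gaps are, and they are not small.  First, your local count for a single $4$-cycle $Q$ is wrong: you write ``of the four ways to delete one edge from $Q$, exactly one is forbidden, so $3$ of $4$ local choices survive.''  In fact the forbidden leading monomial is a product of \emph{two} specific edges of $Q$ (say $e_2e_3$ as in Example \ref{exm:K22}), so the spanning trees containing both — there are \emph{two} of the four, obtained by deleting $e_1$ or $e_4$ — are excluded, and exactly $2$ survive.  This matters because $3^{c(\mathbf{w})}$ would not give the claimed answer; the factor per $4$-cycle really is $2$.  Second, your independence claim — that the $4$-cycles are ``attached to the rest of $G_{\mathbf{w}}$ only through the cone vertex and endpoints in a way that makes the choices independent'' — is false: a subword $\pm\,0\,\pm\,0\,\pm$ produces two $4$-cycles that share a spoke, so the naive product over $4$-cycles is not a priori well-defined.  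The paper resolves precisely this by choosing the edge order so that the shared spoke is the \emph{smallest} edge of the next $4$-cycle; then the leading pair of each $4$-cycle $C_\ell$ is disjoint from the leading pair of $C_{\ell+1}$, the Stanley--Reisner ideal of the restriction becomes $(e_{i_1}e_{i'_1},\dots,e_{i_k}e_{i'_k})$ with pairwise disjoint pairs, and the complex is the join of a simplex with $k$ cross-polytope factors, giving $2^k$ facets.  One must then also verify that every longer even cycle through the cone is redundant (its leading monomial is divisible by one of the $e_{i_\ell}e_{i'_\ell}$), and that when $n$ is even the outer $n$-cycle adds precisely the monomial $e_2e_4\cdots e_{2k}$ in the exceptional alternating case, removing a single maximal simplex.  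These verifications are the substantive content of the proof, and you omit them; without the careful edge ordering and the redundancy checks, the factorization $2^{c(\mathbf{w})}$ does not follow.
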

\begin{proof}
To compute the normalized volume of $F_{\mathbf{w}}$ we will use the strategy outlined at the end of Section \ref{ssec:GB}.

Fixing a total order on the edges of $G$ induces a degrevlex term order as in Theorem \ref{thm:GB}. This gives rise to a regular unimodular triangulation $\Delta$ of $\mathcal{P}_G$. Such a triangulation always features the origin as a cone point, so we are equivalently triangulating (again unimodularly) the boundary $\partial P_G$.

Consider now the Stanley--Reisner ideal $I_{\Delta}$. The algebraic counterpart of restricting $\Delta$ to a facet $F$ consists of adding to $I_{\Delta}$ all the variables corresponding to lattice points not in $F$. Note further that $F$ may contain the lattice point $\mathbf{e}_i - \mathbf{e}_j$ or the lattice point $\mathbf{e}_j - \mathbf{e}_i$, but never both: in algebraic terms, this corresponds to the fact that the monomial $x_ey_e$ (where $e$ is the edge $\{i, j\}$) lives inside $I_{\Delta}$.

In light of all the above, when restricting $\Delta$ to our facet $F_{\mathbf{w}}$, we will consider the Stanley--Reisner ring of the restriction to live in the polynomial ring in the variables corresponding to edges of $G_{\mathbf{w}}$ (for any given edge $e$, we do not need to specify whether to use $x_e$ or $y_e$, as there is only one possible orientation of $e$ inside $G_{\mathbf{w}}$; for simplicity's sake, we will hence just use $e$ to denote the correct variable). With this convention, by Theorem \ref{thm:GB} an edge $e$ appears in the Stanley--Reisner ideal of the triangulation of $F_{\mathbf{w}}$ only if it is contained in some (unoriented) cycle of $G_{\mathbf{w}}$.

We also recall that, if a Stanley--Reisner ideal $I$ decomposes as the sum of Stanley--Reisner ideals $I_1$ and $I_2$ in disjoint sets of variables, then the number of facets of the simplicial complex associated with $I$ is the product of the number of facets of the simplicial complexes associated with $I_1$ and $I_2$.

Now, when $G = K_1 * C_n$, cycles of $G_{\mathbf{w}}$ arise only when we meet a subword of the form $\{\pm, 0, \pm\}$, i.e.~when the three outer vertices are labeled by two nonzero elements on the sides and a zero in the middle. Alternating sequences of the form 
\begin{equation} \label{type_I}
\mathbf{w'} = w'_1\, 0\, w'_2\, 0\, w'_3 \cdots 0\, w'_{k+1}, \ w'_i \in \{+, -\}, \ |\mathbf{w'}| = 2k+1 \leq n
\end{equation}
will give rise to collections of 4-cycles where each cycle shares an edge with the previous one, see Figure \ref{f:type_I} below. If $n$ is even, alternating words of the form
\begin{subequations} \label{type_II}
\begin{align}
\mathbf{w} = w_1\, 0\, w_2\, 0\, \ldots w_k\, 0,\ w_i \in \{+, -\}, \ |\mathbf{w}| = n = 2k \\
\mathbf{w} = 0\, w_1\, 0\, w_2\, \ldots 0\, w_k,\ w_i \in \{+, -\}, \ |\mathbf{w}| = n = 2k
\end{align}
\end{subequations}
will give rise to the $G_{\mathbf{w}}$ drawn in Figures \ref{type_II_gen} and \ref{type_II_exc} below.

It is then enough to show that
\begin{enumerate}[(a)]
\item \label{type_I_case} if we have a subword $\mathbf{w'}$ of type \eqref{type_I}, then the triangulation restricted to the corresponding edges has $2^k$ maximal simplices. 
\item \label{type_II_case} if $n = 2k$ and $\mathbf{w}$ is of type \eqref{type_II}, then $F_{\mathbf{w}}$ is triangulated into $2^k$ simplices unless $\mathbf{w} \in \mathrm{Exc}_n$, in which case we get $2^k - 1$ simplices.
\end{enumerate}
Indeed, for every facet that is not of type \ref{type_II_case} the graph $G_{\mathbf{w}}$ is built from by joining at the cone vertex several graphs of the form given in Figure \ref{f:type_I} (say we have $j$ such parts, with $k_1,\ldots,k_j$ adjacent squares, respectively) and adding some dangling trees, so that $G_{\mathbf{w}}$ has the form illustrated in Figure \ref{fig:glob}. Since all edges of the dangling trees must be in every spanning tree of $G_{\mathbf{w}}$, a choice of a spanning tree for $G_{\mathbf{w}}$ amounts to a choice of a spanning tree in each of the $j$ subgraphs of the form given in Figure \ref{f:type_I}. If Part \ref{type_I_case} holds, then, for all $i=1,\ldots,j$ the $i$-th such subgraph has $2^{k_i}$ spanning trees of the desired type. Therefore, $G_{\mathbf{w}}$ has $2^{k_1}\cdot 2^{k_2}\cdots 2^{k_j}=2^{k_1+\dots+k_j}=2^k$ such spanning trees. This proves that it is in fact enough to prove \ref{type_I_case} and \ref{type_II_case} above.

\begin{figure}[h]
\centering
\includegraphics[trim=0 1cm 0 1cm,scale=0.3]{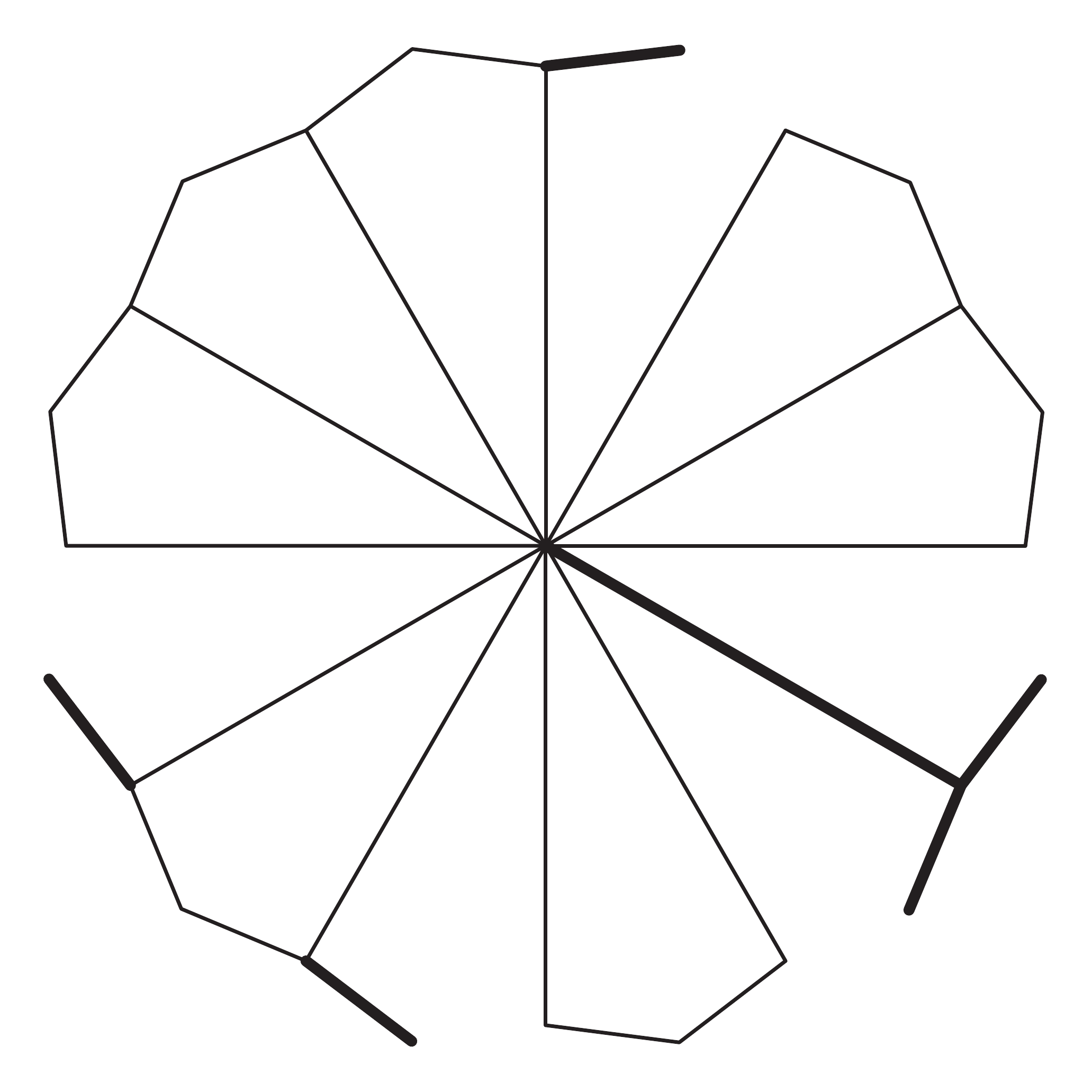}
\caption{Example of a graph $G_{\mathbf{w}}$ without orientation. Edges in bold belong to every spanning subtree.}
\label{fig:glob}
\end{figure}

Let us start with part \ref{type_I_case}. Consider the ordering of the edges of $G_{\mathbf{w'}}$ given in Figure \ref{f:type_I} below, where $e_1$ is the smallest edge, $e_2$ the second smallest and so on. Here and in what follows, the unlabeled edges are bigger than all the labeled ones.

\begin{figure}[h]
\centering
\includegraphics[trim=0 5cm 0 3cm,scale=0.5]{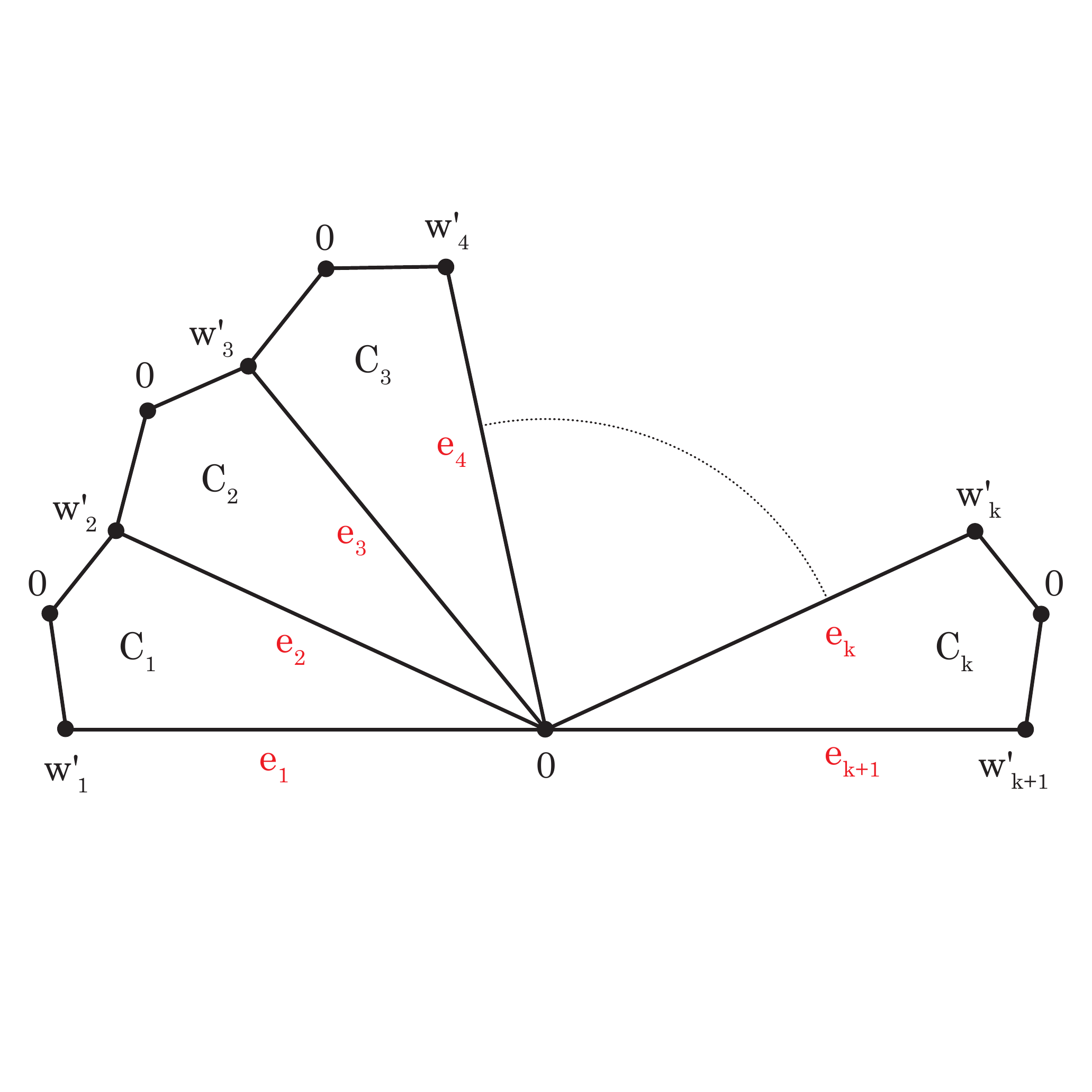}
\caption{The graph $G_{\mathbf{w'}}$ (without orientation), with $\mathbf{w'}$ as in \eqref{type_I}.}
\label{f:type_I}
\end{figure}

Note that each $C_j$ contains two edges oriented clockwise and two oriented counterclockwise. We claim that the Stanley--Reisner ideal associated with $\mathbf{w'}$ is of the form
\[(e_{i_1}e_{i'_1}, e_{i_2}e_{i'_2}, \ldots, e_{i_k}e_{i'_k}),\]
where $e_{i_{\ell}}$ and $e_{i'_{\ell}}$ are distinct elements of $C_{\ell} \setminus \{e_{\ell}\}$ oriented in the same way. In particular, the associated simplicial complex (which is the join of a simplex and a cross-polytope) will have $2^k$ maximal simplices, as desired.
To prove the claim, note that for each cycle $C$ of length greater than four in $G_{\mathbf{w'}}$ there exist distinct indices $j, \ell$ in $\{1, \ldots, k\}$ such that $j < \ell$ and \[C = (C_j \setminus \{e_{j+1}\}) \cup (C_{j+1} \setminus \{e_{j+1}, e_{j+2}\}) \cup \ldots \cup (C_{\ell-1} \setminus \{e_{\ell-1}, e_{\ell}\}) \cup (C_{\ell} \setminus \{e_{\ell}\}).\] Note also that $C$ contains $2(\ell-j+2)$ edges, of which half are oriented clockwise and half counterclockwise; moreover, the smallest edge in $C$ is $e_j$. In particular, the only potential new monomial $\mathbf{m}$ in the Stanley--Reisner ideal is obtained by taking the product of all the edges with the opposite orientation as $e_j$. Now, if $w'_i = w'_j$ for every $i \in \{j+1, j+2, \ldots, \ell+1\}$, then the edge $e_{\ell+1}$ has the opposite orientation as $e_j$ inside $C$. Since the edges in $C_{\ell} \setminus \{e_{\ell}, e_{\ell+1}\}$ are both contained in $C$ and have opposite orientations, we get that $\mathbf{m}$ is divided by $e_{i_{\ell}}e_{i'_{\ell}}$ and is hence superfluous. Otherwise, let $h+1 \in \{j+1, \ldots, \ell+1\}$ be the smallest index such that $w'_{h+1} \neq w'_j$. Then the two edges in $C_h \setminus \{e_h, e_{h+1}\}$ have the same orientation, which is the opposite as $e_j$. In particular, $\mathbf{m}$ is divided by $e_{i_h}e_{i'_h}$.

Let us now prove part \ref{type_II_case}. First consider the case when $\mathbf{w} \notin \mathrm{Exc}_n$. This means that both ``$+$'' and ``$-$'' appear in $\mathbf{w}$. Without loss of generality, we may assume that $w_1 = +$ and $w_k = -$. Order the edges as in Figure \ref{type_II_gen} below.

\begin{figure}[h]
\centering
\includegraphics[scale=0.4]{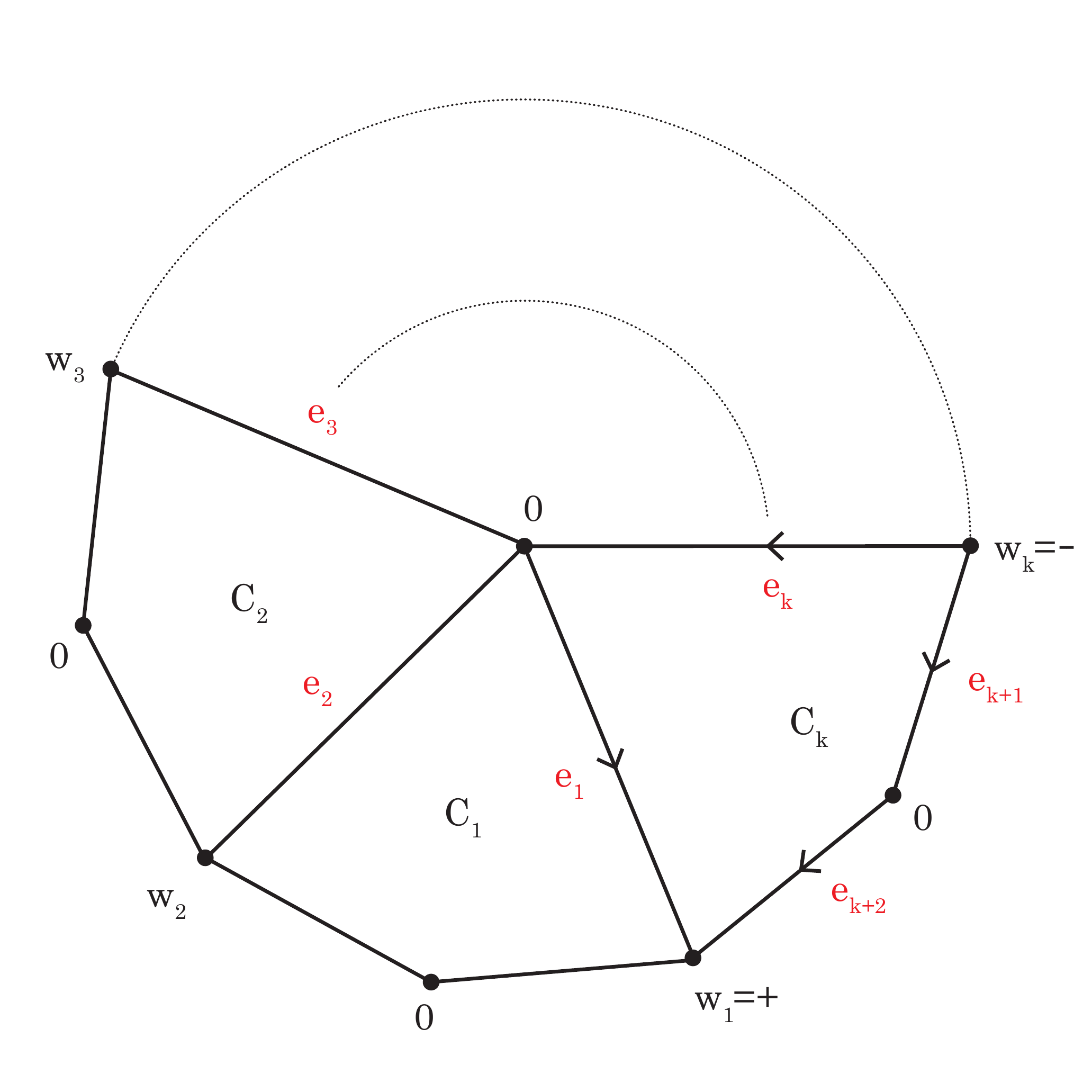}
\caption{The graph $G_{\mathbf{w}}$ with $\mathbf{w}$ as in \eqref{type_II}, $\mathbf{w} \notin \mathrm{Exc}_n$.}
\label{type_II_gen}
\end{figure}

We claim that the Stanley--Reisner ideal associated with $\mathbf{w}$ is of the form
\[(e_{i_1}e_{i'_1}, e_{i_2}e_{i'_2}, \ldots, e_{i_{k-1}}e_{i'_{k-1}}, e_{k+1}e_{k+2}),\]
where $e_{i_{\ell}}$ and $e_{i'_{\ell}}$ are distinct elements of $C_{\ell} \setminus \{e_{\ell}\}$ for $\ell \in \{1, \ldots, k-1\}$ oriented in the same way. Note that $e_{k+1}e_{k+2}$ is in the ideal because, due to our hypotheses, $e_k$ has the same orientation as $e_1$ inside $C_k$. Now let $C$ be a cycle different from $C_1, \ldots, C_k$.
\begin{itemize}
\item If $C$ is the full outer cycle, then it contains $n=2k$ edges equally divided into the two possible orientations. The only potential new monomial $\mathbf{m}$ in the Stanley--Reisner ideal comes from the product of all the edges with the opposite orientation as $e_{k+1}$. Since $e_{k+1}$ and $e_{k+2}$ have the same orientation, by the pigeonhole principle there must be an index $h \in \{1, \ldots, k-1\}$ such that the two edges $e_{i_h}$ and $e_{i'_h}$ of $C_h \setminus \{e_h, e_{h+1}\}$ have the same orientation. Then $e_{i_h}e_{i'_h}$ divides $\mathbf{m}$.
\item If $C$ is not the full outer cycle, then there exist distinct indices $j, \ell$ such that the cycle $C$ starts with $e_j$ and follows clockwise the outer cycle until it ends with $e_{\ell}$. If $\ell > j$, everything works as in case \ref{type_I_case} above. Assume that $\ell < j$. Then $e_{k+1}$ and $e_{k+2}$ lie in $C$ and have the same orientation; moreover, $e_{\ell}$ is the smallest edge in $C$. As before, since the edges in $C$ are equally divided into the two possible orientations, there is at most one monomial $\mathbf{m}$ arising from $C$. If $e_{\ell}$ has the opposite orientation as $e_{k+1}$ and $e_{k+2}$ inside $C$, then $e_{k+1}e_{k+2}$ divides $\mathbf{m}$. If this is not the case, then it must be that $w_{\ell} = -$ (and in particular $\ell > 1$). Then there exists $h \in \{1, \ldots, \ell-1\}$ such that $w_h = +$ and $w_{h+1} = -$. But then the edges $i_h$ and $i'_h$ in $C_h \setminus \{e_h, e_{h+1}\}$ have the same orientation, which is the opposite as $e_j$. Hence, $e_{i_h}e_{i'_h}$ divides $\mathbf{m}$.

\end{itemize}
Finally, let us consider the case when $\mathbf{w} \in \mathrm{Exc}_n$. Without loss of generality, we will assume $\mathbf{w} = {+}{0}{+}{0}\ldots{+}{0}$. In this case we order the edges as shown in Figure \ref{type_II_exc} below.

\begin{figure}[h]
\centering
\includegraphics[scale=0.4]{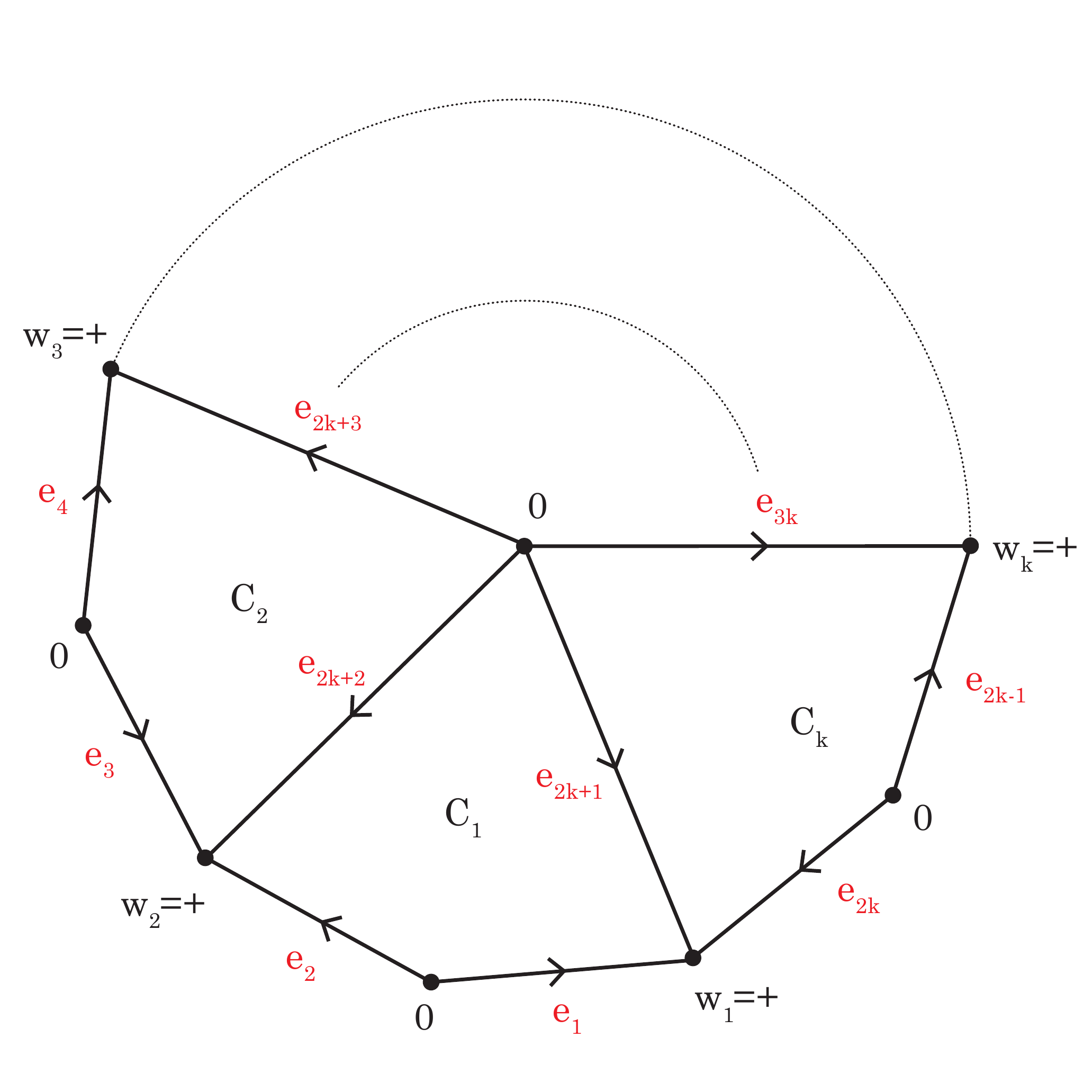}
\caption{The graph $G_{\mathbf{w}}$ for $\mathbf{w} = {+}{0}{+}{0}\ldots{+}{0} \in \mathrm{Exc}_n$.}
\label{type_II_exc}
\end{figure}

We claim that the Stanley--Reisner ideal associated with $\mathbf{w}$ is
\[(e_2e_{2k+1}, e_4e_{2k+2}, e_6e_{2k+3}, \ldots, e_{2k}e_{3k}, e_2e_4e_6\ldots e_{2k}).\]

If this is the case, then the associated simplicial complex has $2^k - 1$ maximal simplices, being the join of a cross-polytope with a single facet removed and a simplex. Let us prove the claim. If $C$ is a cycle different from the outer one and $\mathbf{m}$ is the monomial arising from it, then there exists $h \in \{1, \ldots, k\}$ such that $\{e_{2k+h}, e_{2h-1}, e_{2h}\} \subseteq C$. In particular, $e_{2h}e_{2k+h}$ divides $\mathbf{m}$.
\end{proof}
\begin{theorem}
Let $n \geq 3$ and denote by $\vol_n$ the normalized volume of $\mathcal{P}_{K_1 * C_n}$. Then
\begin{equation*}\vol_n = 
\begin{cases}
(1-\sqrt{3})^n + (1+\sqrt{3})^n & \textrm{if } n \textrm{ is odd}\\
(1-\sqrt{3})^n + (1+\sqrt{3})^n - 2 & \textrm{if } n \textrm{ is even}.
\end{cases}
\end{equation*}
\end{theorem}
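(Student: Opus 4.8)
The plan is to evaluate $\vol_n$ as a sum of facet volumes, translate that sum into a weighted enumeration of cyclic words, and carry out the latter with a transfer matrix. Since $K_1 * C_n$ is connected, the unimodular boundary triangulation produced by Theorem \ref{thm:GB} (see the strategy stated right after it) gives $\vol_n = \sum_{F}\vol(F)$, the sum ranging over all facets $F$ of $\mathcal{P}_{K_1 * C_n}$. By Proposition \ref{wheel_facet_count} the facets are in bijection with the cyclic words $\mathbf{w} \in \{+,0,-\}^n$ avoiding $+-$, $-+$ and $000$, and by Proposition \ref{wheel_facet_volume} one has $\vol(F_{\mathbf{w}}) = 2^{c(\mathbf{w})}$ for every such $\mathbf{w}$, with the sole proviso that, when $n$ is even, each of the four words of $\mathrm{Exc}_n$ contributes $2^{c(\mathbf{w})}-1$ instead. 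Next I would observe that $c(\mathbf{w})$ --- the number of $4$-cycles of $G_{\mathbf{w}}$ through the cone vertex --- is exactly the number of \emph{isolated zeros} of $\mathbf{w}$, i.e.\ the number of indices $i$ modulo $n$ with $w_{i-1}\neq 0$, $w_i = 0$, $w_{i+1}\neq 0$: indeed any $4$-cycle through the cone vertex must have the shape $(\text{cone})-v_{i-1}-v_i-v_{i+1}-(\text{cone})$, and one checks from Theorem \ref{thm:facets} that its four edges lie in $G_{\mathbf{w}}$ precisely when $w_{i-1},w_{i+1}\neq 0$ and $w_i = 0$. Writing $T_n := \sum_{\mathbf{w}} 2^{c(\mathbf{w})}$ (sum over all admissible cyclic words), we then have $\vol_n = T_n$ for $n$ odd and $\vol_n = T_n - 4$ for $n$ even.

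To compute $T_n$ I would use a transfer matrix on the three states that record the zero/nonzero pattern of the last two letters: state $A$ (``last letter nonzero''), state $D$ (``last letter $0$, preceding letter nonzero'') and state $Z$ (``last two letters both $0$''). Appending one more letter, and charging a factor $2$ each time (a) a nonzero letter is appended to a $D$-position --- which is exactly the moment the zero sitting there becomes isolated --- and (b) a nonzero run is started out of a zero-state, which admits two sign choices, one arrives at
\[
M = \begin{pmatrix} 1 & 1 & 0 \\ 4 & 0 & 1 \\ 2 & 0 & 0 \end{pmatrix},
\]
with rows and columns indexed by $A, D, Z$. Evaluating $\operatorname{tr}(M^n)$ as a sum over closed walks, one finds that it accounts for every admissible cyclic word weighted by $2^{c(\mathbf{w})}$, except that the two words $+\cdots+$ and $-\cdots-$ are both realized only by the single closed walk $A\to A\to\cdots\to A$ of weight $1$ and are hence jointly undercounted by $1$; therefore $T_n = \operatorname{tr}(M^n) + 1$.

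Finally, the characteristic polynomial of $M$ factors as $\lambda^3 - \lambda^2 - 4\lambda - 2 = (\lambda+1)(\lambda^2 - 2\lambda - 2)$, with roots $-1$ and $1\pm\sqrt{3}$, so that $\operatorname{tr}(M^n) = (-1)^n + (1+\sqrt{3})^n + (1-\sqrt{3})^n$ and $T_n = (1+\sqrt{3})^n + (1-\sqrt{3})^n + (-1)^n + 1$. Since $(-1)^n + 1$ equals $0$ when $n$ is odd and $2$ when $n$ is even, substituting into $\vol_n = T_n$ (resp.\ $\vol_n = T_n - 4$) yields exactly the two claimed expressions.

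The main obstacle I expect is the bookkeeping in the transfer-matrix step: one has to design the state space so that a single matrix simultaneously encodes the forbidden subwords, the two-fold sign choices, and the statistic $c(\mathbf{w})$, and --- most delicately --- one must spot and correct the $+1$ coming from the all-nonzero cyclic words, whose sign is ``global'' and hence invisible to every transition of type (zero-state)$\to$(nonzero-state). Getting this correction exactly right is what separates the odd and even cases and produces the constant $-2$ (rather than some other value) in the even case. A quick check against small $n$ --- $\vol_3 = (1-\sqrt3)^3+(1+\sqrt3)^3 = 20$ and $\vol_4 = (1-\sqrt3)^4+(1+\sqrt3)^4 - 2 = 54$, matching $\operatorname{tr}(M^3)+1 = 20$ and $\operatorname{tr}(M^4)+1-4 = 54$ --- is a reassuring consistency test.
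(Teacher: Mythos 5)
Your argument is correct, and the numerical checks you supply come out right. It differs from the paper's route in a way worth noting. The paper also reduces to the weighted count $T_n=\sum_{\mathbf w} 2^{c(\mathbf w)}$ (sum over admissible cyclic words), and then subtracts $4$ in the even case, exactly as you do. But the paper encodes the factor $2^{c(\mathbf w)}$ by enlarging the alphabet to $\{+,0,\bar 0,-\}$, with $\bar 0$ forced to be an ``isolated zero'' by adding the forbidden subwords $\bar 0\bar 0,\ 0\bar 0,\ \bar 00$, and then feeds the whole thing into the Edlin--Zeilberger cyclic cluster method to obtain a rational generating function, whose partial-fraction decomposition produces the roots $1\pm\sqrt 3$, $\pm1$ directly. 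You instead observe, explicitly, that $c(\mathbf w)$ is the number of isolated zeros (the paper uses this implicitly but never states it), and set up a three-state transfer matrix whose trace yields $T_n$ up to a uniform $+1$ correction for the two all-nonzero words. Both are legitimate cyclic-word generating-function arguments; your version is more elementary and self-contained (no black-box cluster method, and the eigenvalues $-1,\ 1\pm\sqrt 3$ fall out of the obvious $3\times 3$ characteristic polynomial), at the cost of having to spot and justify the $+1$ correction, whereas the paper's machinery handles the low-order boundary effects automatically via the $\mathrm{chop}$ operation and then discards the polynomial part of the generating function. Your bookkeeping in the matrix (factor $4$ on $D\to A$ for sign choice plus isolated zero, factor $2$ on $Z\to A$ for sign choice only) and the identification of exactly which closed walk is undercounted are both correct.
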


\begin{proof}
By Proposition \ref{wheel_facet_volume}, every subsequence of the form $(\pm,0,\pm)$ in a word $\mathbf{w}$ defining a facet contributes a factor $2$ to the overall volume of the facet, unless we are in one of the exceptional cases. Hence, we would like to assign weight two to each appearance of a subword $(\pm, 0, \pm)$. The trick is to enlarge our alphabet to four letters $+, 0, \bar{0}, -$ and count the cyclic words avoiding not just the subwords $+-$, $-+$ and $000$ (as in Proposition \ref{wheel_facet_count}), but also $\bar{0}\bar{0}$, $0\bar{0}$ and $\bar{0}0$. We can then again apply the method of Edlin and Zeilberger to get the rational generating function, that turns out to be
\[\frac{2s^6 + 6s^5-5s^4 - 12s^3 + 2s + 1}{2s^4 + 2s^3-3s^2-2s+1},\] which equals \[-3 + 2s + s^2 + \frac{2s^3-6s^2-6s+4}{2s^4 + 2s^3-3s^2-2s+1}.\]
Since we are interested in the coefficients from $s^3$ onwards, we can ignore the $-3 + 2s + s^2$ part. Since \[\frac{2s^3-6s^2-6s+4}{2s^4 + 2s^3-3s^2-2s+1} = \frac{1}{1-(1-\sqrt{3})s} + \frac{1}{1-(1+\sqrt{3})s} + \frac{1}{1-(-s)} + \frac{1}{1-s},\]
expanding each geometric series we get that the coefficient of $x^n$ is given by
\[
\begin{cases}
(1-\sqrt{3})^n + (1+\sqrt{3})^n & \textrm{if } n \textrm{ is odd}\\
(1-\sqrt{3})^n + (1+\sqrt{3})^n + 2 & \textrm{if } n \textrm{ is even}.
\end{cases}
\]
By Proposition \ref{wheel_facet_volume}, to get the actual volume we need to subtract four from the even case, hence proving the claim.
\end{proof}

\begin{remark}
After sending the first draft, we found out that an alternative approach to computing the normalized volume of the wheel (and its $h^*$-vector) can be taken via \cite[Theorem 4.3]{ohsugi2019h}.
\end{remark}

\subsection{Fundamental polytopes of full planar splits networks}\label{ssec:splits}

As explained in Section \ref{ssec:FundPoly}, the symmetric edge polytope of a simple graph $G$ gives rise, via linear sections, to Kantorovich--Rubinstein polytopes of the metric spaces defined by a certain choice of a subset of the vertices of $G$. The ``full" $\mathcal{P}_G$ is, in fact, the Kantorovich--Rubinstein polytope of the metric space corresponding to choosing the full set of vertices of $G$. In \cite{delucchi2016fundamental} the authors computed the face numbers of Kantorovich--Rubinstein polytopes of all admissible labelings when $G$ is a tree, proving first that all these polytopes are zonotopes and then exploiting a parallel-decomposition of the associated matroid. This setting covers the class of all {\em tree-like} metric spaces, which are a subclass of the so-called {\em split-decomposable} metric spaces \cite{Huson}. In computational phylogenetics, different types of split-decomposable metric spaces are studied, often in terms of the associated {\em splits network}, i.e., a weighted graph that represents the given split-decomposable metric space. 

\begin{definition}[{\cite[\S 5.5]{Huson}}] A {\em splits graph} is represented by a finite, simple, connected, bipartite graph together with an isometric coloring of its edges (i.e., every edge-length-minimal path uses at most once every color and any two minimal paths with the same endpoints use the same set of colors). 
\end{definition}

Given a subset $X$ of the vertices of a splits graph, every color defines a bipartition of $X$ as follows (see \cite[Theorem 5.5.2]{Huson}): the two parts are the subsets of $X$ on either connected component of the graph obtained by removing all edges of the given color.

If we associate a positive real weight $w_c$ to every color $c$, given any two vertices $x,y\in X$ we can associate to every path $\pi$ from $x$ to $y$ the sum $W(\pi)$ of all $w_c$ where $c$ ranges over all colors of edges in the path $\pi$. Then,  we obtain a metric on $X$ by setting the distance of any two $x,y\in X$ to be the minimum of all $W(\pi)$, where $\pi$ ranges over all paths from $x$ to $y$. Every split-decomposable finite metric space can be represented in this way.

\begin{definition}

We call a split-decomposable metric {\em elemental}, resp.\ {\em full} if it can be represented as above on a splits graph by taking every color to have weight $1$, resp.\ by labeling every vertex of the graph, i.e.~taking $X$ to be the set of all vertices. For full, elemental split-decomposable metrics we have $\mathcal K_{G,V}=\mathcal P_G$. 

\end{definition}

\begin{definition}[{\cite[\S 5.7]{Huson}}]
A split-decomposable metric is called {\em circular} if it can be represented on a splits network that admits a planar drawing where all labeled nodes are in the boundary of the unbounded region.
\end{definition}

Recall that an outerplanar graph is a planar graph that has a drawing where every vertex is in the boundary of the unbounded face. Then, a full circular split-decomposable metric is one that can be represented by a fully labeled outerplanar splits network. If in addition such a metric is elemental, we can study the associated Kantorovich--Rubinstein polytope by looking at the symmetric edge polytope of the network graph.

\begin{proposition}\label{prop:BCO}
Let $G$ be a bipartite, connected, outerplanar graph. 
Then, the number of facets of $\mathcal {P}_G$ and its normalized volume are
$$
\facets{\mathcal P_G}=
2^{t-s}\prod_{i=1}^{k}{2a_i \choose a_i}, 
\quad\quad\quad
\operatorname{vol}(\mathcal P_G)
=2^{t-s}\prod_{i=1}^{k}a_i{2a_i \choose a_i}, 
$$
where $a_1,\ldots, a_k$ are the half-lengths of the boundaries of the bounded regions, $s$ is the number of edges separating two bounded regions and $t$ is the number of bridges of $G$.
\end{proposition}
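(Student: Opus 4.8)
The plan is to reduce the claim, via the structure theory of bipartite outerplanar graphs, to repeated application of the gluing results established earlier: Proposition~\ref{prop:facetsconnection} (for facet counts under edge-gluing), Theorem~\ref{thm:hjoin} together with the subsequent propositions (for the $h^*$-polynomial, hence normalized volume, under edge-gluing), the one-vertex join formula $\mathcal{P}_G = \mathcal{P}_{G_1}\oplus\mathcal{P}_{G_2}$ from \cite{ohsugi2019h} (for bridges), and Corollary~\ref{cor:cyc2k} together with its volume corollary (for the individual even cycles). The key structural observation is that a connected bipartite outerplanar graph has a ``tree of blocks'' decomposition: its blocks (maximal 2-connected subgraphs) are either single edges (the bridges, of which there are $t$) or $2$-connected outerplanar graphs, and each $2$-connected outerplanar graph is itself obtained by gluing its bounded faces along shared edges in a treelike pattern (the weak dual of a $2$-connected outerplanar graph is a tree). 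Since $G$ is bipartite, every bounded face is an even cycle, say of length $2a_i$; moreover a shared edge between two bounded faces is exactly one of the $s$ ``separating'' edges, and the blocks fit together along cut vertices.

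First I would set up the block-cut-vertex tree of $G$ and handle the bridges: by the one-vertex join formula, $\mathcal{P}_G$ is the direct sum of the symmetric edge polytopes of the blocks, and $\facets{\cdot}$ as well as normalized volume are multiplicative over direct sums, while a bridge $e=\{u,v\}$ gives $\mathcal{P}_{\{u,v\}}$, a segment with $2$ facets and normalized volume $2$ --- contributing the factor $2^t$ "before" correction. Next I would treat each $2$-connected bipartite outerplanar block by induction on the number of its bounded faces, using the fact that such a block has a bounded face meeting the rest of the block in a single edge (a leaf of the weak dual tree), so it is an edge-gluing of an even cycle $C_{2a_i}$ with a smaller $2$-connected bipartite outerplanar graph. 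Proposition~\ref{prop:facetsconnection} then multiplies facet counts and divides by $2$ at each such gluing, and Theorem~\ref{thm:hjoin} (with its volume corollary, analogous to the cycle-chain proposition) multiplies the normalized volumes and divides by the leading coefficient, again introducing a factor $\tfrac12$; the base case of a single even cycle is Corollary~\ref{cor:cyc2k}. Assembling: each bounded face contributes $\binom{2a_i}{a_i}$ to the facet count and $a_i\binom{2a_i}{a_i}$ to the volume; each of the $s$ separating edges contributes a division by $2$; each of the $t$ bridges contributes a factor $2$. So $\facets{\mathcal{P}_G} = 2^{t}\cdot 2^{-s}\prod_{i=1}^k\binom{2a_i}{a_i}$ and likewise for the volume, which is the claim.

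I expect the main obstacle to be bookkeeping rather than a single hard step: one must verify that the weak-dual-tree decomposition of a $2$-connected outerplanar graph does present it as an iterated \emph{edge}-gluing in which the shared edge at each stage is genuinely the common edge of \emph{two} bounded faces (so that it is counted in $s$ and never in $t$), and that ``bridges'' in the statement are precisely the edges lying in no cycle, i.e. the blocks that are single edges --- so that $s$ and $t$ together with the $a_i$ account for every edge exactly once in the combinatorial bookkeeping. One subtlety to check is that at each edge-gluing step the two pieces are still connected bipartite outerplanar graphs (connectivity and bipartiteness are clear; outerplanarity of the "remainder" after peeling off a leaf face of the weak dual is the point that needs a sentence), so that the hypotheses of Proposition~\ref{prop:facetsconnection} and Theorem~\ref{thm:hjoin} are met at every stage. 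A second, minor subtlety is that Theorem~\ref{thm:hjoin} requires one of the two glued graphs to be bipartite, which is automatic here since \emph{all} pieces are bipartite; this also shows, as a byproduct, that $h^*_{\mathcal{P}_G}$ is real-rooted, though that is not part of the present statement.
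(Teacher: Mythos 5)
Your proposal is correct and follows essentially the same approach as the paper: decompose along the block-cut-vertex tree (each bridge a single-edge block contributing a factor $2$ via the direct-sum/vertex-join result), then within each biconnected outerplanar block peel off leaf faces of the weak dual tree (the paper cites Syslo for this being a tree) and apply Proposition~\ref{prop:facetsconnection} and Theorem~\ref{thm:hjoin} at each edge-gluing, giving exactly the factors $\tfrac12\binom{2a_i}{a_i}$ and $\tfrac12 a_i\binom{2a_i}{a_i}$, with the base case supplied by Corollary~\ref{cor:cyc2k}. The only cosmetic difference is that the paper interleaves the vertex-joins and edge-joins into one linear sequence of elementary joins, whereas you first split off all the blocks and then handle each block by induction; the bookkeeping and the subtleties you flag (that shared edges counted in $s$ are genuinely common to two bounded faces, that bipartiteness and outerplanarity persist after peeling) are handled identically in substance.
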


\begin{example}
In the graph of Figure \ref{fig:triple}.(a) we have $k=5$ with $a_1=a_2=a_3=a_4=2$, $a_5=3$, $s=3$ and $t=3$. Thus, the symmetric edge polytope of this graph has $6^420=25920$ facets and a normalized volume of $1244160$. 
\end{example}

\begin{proof}
The blocks \cite[\S 3.1]{Diestel} of an outerplanar graph are either single edges or biconnected (outerplanar) graphs. The block graph (i.e., the intersection graph of the blocks) is a tree  \cite[Lemma 3.1.4]{Diestel}, so we can enumerate the blocks $B_1,\ldots,B_n$ according to a ``reverse pruning order'' of this tree, i.e., an ordering of the vertices such that the vertex-induced subgraph on the first $i$ vertices is connected, for every $i$ (see Figure \ref{fig:triple}.(b)).

\begin{figure}
\includegraphics[scale=0.25]{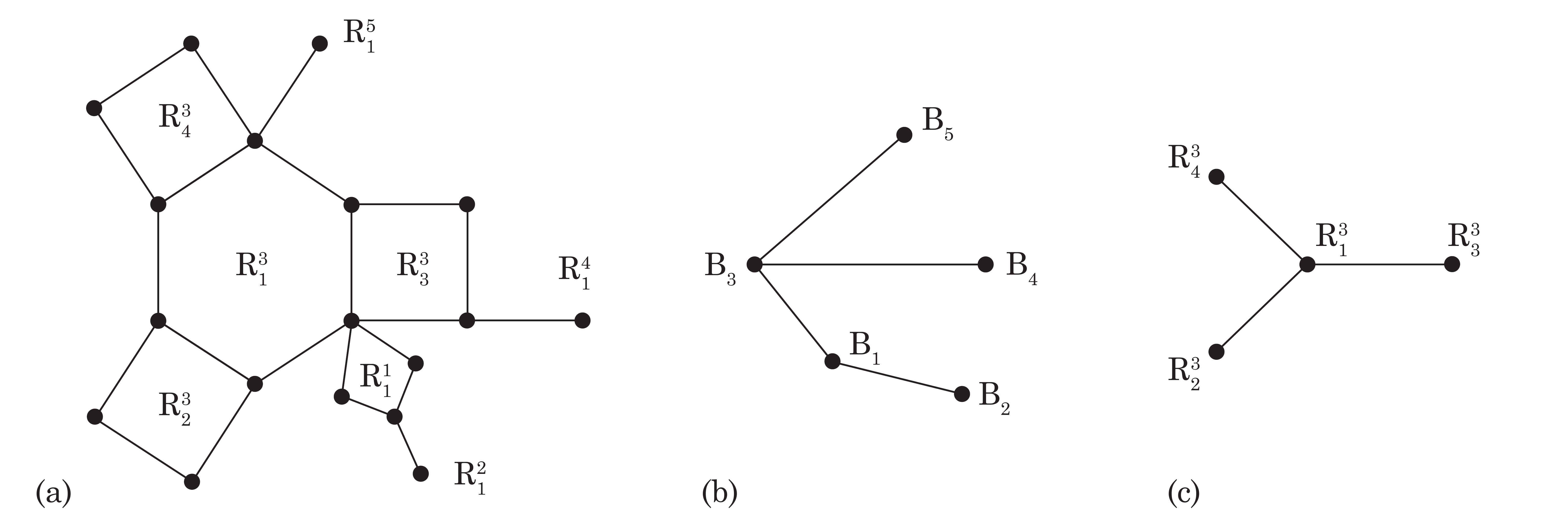}
\caption{(a) A bipartite outerplanar graph. (b) Its block graph. (c) The cycle graph of the block $B_3$.}
\label{fig:triple}
\end{figure}

Moreover, by \cite{Syslo}, the cycle graph of every biconnected block $B_i$ with respect to the cycle basis given by the bounded faces is a tree, and hence we can enumerate the boundary cycles of bounded regions of $B_i$, say $R_1^i,\ldots,R_{j_i}^i$, again according to a reverse pruning order. Notice that, without loss of generality, we can assume that the cycle $R_1^i$ has nonempty intersection with the union of the blocks $B_1,\ldots,B_{i-1}$ (every vertex of a tree can be chosen as the start of a reverse pruning order). If $B_i$ is not biconnected, i.e., it is a single edge, we let $R_1^i=B_i$ and $j_i=1$ (see Figure \ref{fig:triple}.(c)). The ordering
\begin{equation}\label{eq:seq}
R_1^1,\ldots,R_{j_1}^1,R_1^{2},\ldots,R_{j_2}^2,\ldots, R_{j_n}^n
\end{equation}
exhibits $G$ as a sequence of elementary joins. Notice that every $R^i_h$, $h>1$ is an even cycle -- call $2a$ its length -- joined along an edge to the part of the graph constructed earlier; thus it contributes a factor $\frac{1}{2}{2a \choose a }$ to the number of facets and a factor $\frac{1}{2} a{2a \choose a }$ to the normalized volume of the symmetric edge polytope. On the other hand, every $R^i_1$, $i>1$ is joined to the previous part by identifying a vertex. Thus, if $R^i_1$ is an even cycle of length $2a$ it will contribute a factor ${2a \choose a }$ to the number of facets and a factor $a{2a \choose a }$ to the normalized volume. Otherwise, if $R^i_1$ is a single edge it will contribute a factor $2$ to the facets and $2$ to the normalized volume. In the sequence \eqref{eq:seq} every boundary of a bounded region and every bridge of $G$ appears exactly once, and the number of attachments along an edge is exactly $s$.
\end{proof}

We obtain immediately the following corollary.

\begin{corollary}
Let $(X,d)$ be a full, elemental and circularly split-decomposable metric space. Then the number of facets and the normalized volume of the associated Kantorovich--Rubinstein polytope can be computed as in Proposition \ref{prop:BCO} from any drawing of a  (outerplanar, full) splits network representing $(X,d)$.

Moreover, if all biconnected blocks of the splits network are cycles, then the polytope is the direct sum of the polytopes of the cycles and the bridges (see Proposition \ref{prop:fullFeven}).
\end{corollary}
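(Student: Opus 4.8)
The plan is to reduce the statement to Proposition~\ref{prop:BCO} through the identification of the Kantorovich--Rubinstein polytope with a symmetric edge polytope. First I would unwind the definitions. By hypothesis $(X,d)$ admits a representation by a full, elemental splits network $G$; being circular, this network can be taken to have an outerplanar drawing, and by the definition of a splits graph $G$ is finite, simple, connected and bipartite. Since the metric is full we have $X = V(G)$, and since it is elemental all colors carry weight $1$; thus, by the identity recorded in the definition of full/elemental metrics, $KR(X,d) = \mathcal{K}_{G,V(G)} = \mathcal{P}_G$. Consequently the two quantities to be computed --- the number of facets and the normalized volume of $KR(X,d)$ --- coincide with $\facets{\mathcal{P}_G}$ and $\vol(\mathcal{P}_G)$.

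Next I would apply Proposition~\ref{prop:BCO} verbatim to $G$: it is bipartite, connected and outerplanar, so $\facets{\mathcal{P}_G}$ and $\vol(\mathcal{P}_G)$ equal the stated products over the half-lengths $a_1, \dots, a_k$ of the boundaries of the bounded regions, over the number $s$ of edges separating two bounded regions and over the number $t$ of bridges, all read off from the chosen outerplanar drawing of $G$. The one remaining point is to observe that the resulting value does not depend on which outerplanar, fully labeled splits network representing $(X,d)$, or which of its planar drawings, is used; this is automatic, since the left-hand side is intrinsic to $(X,d)$, being equal to $\facets{KR(X,d)}$ and $\vol(KR(X,d))$.

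For the second assertion I would use the block--cut-tree structure of $G$, exactly as in the proof of Proposition~\ref{prop:BCO}. If every biconnected block of $G$ is a cycle, then no two bounded regions share an edge (else the corresponding cycles would lie in a common biconnected block), so $s = 0$, and $G$ is assembled from the cycle blocks and the bridge blocks $K_2$ by successive identifications along single cut vertices, following a reverse pruning order of the block tree. By \cite[Proposition~4.2]{ohsugi2019h}, recalled in Section~\ref{ssec:joins}, identifying two connected graphs at a vertex turns the symmetric edge polytope into the direct sum of the two symmetric edge polytopes; iterating over the block tree gives
\[
\mathcal{P}_G \;\cong\; \mathcal{P}_{C_{2a_1}} \oplus \cdots \oplus \mathcal{P}_{C_{2a_k}} \oplus \underbrace{\mathcal{P}_{K_2} \oplus \cdots \oplus \mathcal{P}_{K_2}}_{t},
\]
with one segment $\mathcal{P}_{K_2}$ for each bridge. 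Combining Remark~\ref{rem:poset} (the poset of proper faces of a direct sum is the product of the posets of proper faces) with the explicit description of $\mathscr{F}(\mathcal{P}_{C_{2k}})$ in Proposition~\ref{prop:fullFeven} and the trivial face poset of the segment $\mathcal{P}_{K_2}$ then yields the face structure; in particular this recovers the numerical formulas above specialized to $s = 0$, using the multiplicativity of facet count and normalized volume under direct sums.

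The main obstacle is really a bookkeeping issue rather than a genuine difficulty: one must be sure that the combinatorial data $(a_i)_i$, $s$ and $t$ extracted from possibly different outerplanar splits networks, or different drawings of the same network, always produce the same two products. As indicated above, I would dispatch this by noting that both products compute invariants of the single polytope $KR(X,d) = \mathcal{P}_G$, hence are independent of all such choices.
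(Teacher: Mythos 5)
Your proposal is correct and fills in exactly the derivation the paper leaves implicit (the paper states only "We obtain immediately the following corollary" without a written proof). You correctly unwind the definitions to get $KR(X,d)=\mathcal{K}_{G,V}=\mathcal{P}_G$, apply Proposition~\ref{prop:BCO} for the numerics, dispose of the well-definedness question by noting the quantities are invariants of the polytope itself, and for the second assertion use the block-tree structure together with the vertex-join/direct-sum result from \cite[Proposition~4.2]{ohsugi2019h} and Remark~\ref{rem:poset} -- which is precisely the machinery set up in \S\ref{ssec:joins} and the proof of Proposition~\ref{prop:BCO}.
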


\begin{example}
Consider the full splits network of Figure \ref{SplitNC}. It is one of the basic examples of non-compatible split metric spaces. The associated fundamental polytope has the combinatorial type of the direct sum of a $4$-dimensional crosspolytope (i.e., the direct sum of the $4$ edges) and a cube (i.e., the polytope of the $4$-cycle, see Example \ref{ex:cube}). It has therefore $16\cdot 6=96$ facets and normalized volume equal to $16 \cdot 12 = 192$.

\begin{figure}[t]
\centering
\includegraphics[scale=0.3]{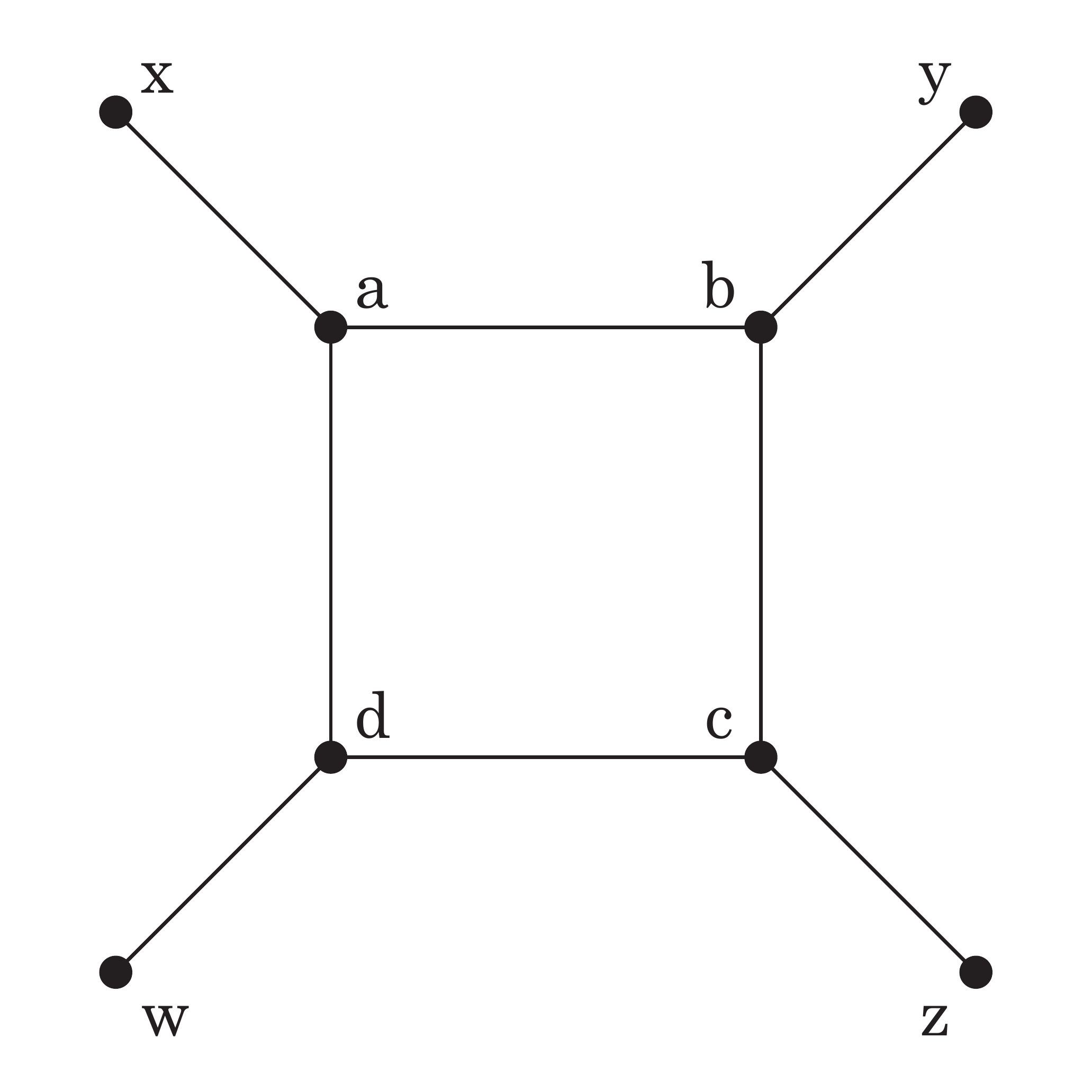}
\caption{}
\label{SplitNC}
\end{figure}
\end{example}

\subsection{Bipartite planar graphs, flows and polar duals of symmetric edge polytopes}\label{sec:flows}
In this section we focus on bipartite planar graphs and connect the combinatorics of their symmetric edge polytopes to the theory of integral flows. This allows us to derive a general bound on the number of faces and a result on polar duals of symmetric edge polytopes.

As is usual when talking about flows in graphs, we will need to arbitrarily specify a {\em tail} and a {\em head} of every edge of a graph $G=(V,E)$. Formally, we will consider two functions $h,t: E\to V$ such that $\{t(e),h(e)\}$ is the set of vertices incident to $e$, for every $e\in E$.

A {\em flow} on $G$ with values in an Abelian group $\mathbb A$ is any $x\in \mathbb A^E$ that satisfies the conservation condition: $\sum_{e\in h^{-1}(v)}x_e = \sum_{e\in t^{-1}(v)} x_e$. Such a flow is called {\em nowhere zero} if no component of $x$ is the identity of $\mathbb A$. Given $k\in \mathbb Z$, a $\mathbb Z_k$-flow is any flow with values in the cyclic group $\mathbb Z_k=\{0,\ldots,k-1\}$. A $k$-flow is a flow with values in the group $\mathbb Z$ and such that $\vert x_e \vert <k$ for all $e\in E$. 

It is classically known that the number of nowhere-zero $\mathbb Z_k$-flows is expressed by a polynomial in $k$, the {\em flow polynomial} $\mfl_G(k)$ of $G$. In particular, when $G$ is a planar graph the flow polynomial is related to the chromatic polynomial of the dual graph $G^*$: $\mfl_G(k)=k^{-1}\chi_{G^*}(k)$. 

A result by Kochol \cite{Kochol} states that also the number of nowhere-zero $k$-flows is expressed by a polynomial, which we will denote by $\ifl_G(k)$.

\begin{proposition}\label{prop:facetsbipartite}
Let $G$ be a bipartite, planar, connected graph. Then the number of facets of $\mathcal{P}_G$ is the number $\ifl_{G^*}(2)$ of nowhere-zero $2$-flows on the dual graph $G^*$.
\end{proposition}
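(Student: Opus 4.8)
The plan is to identify facets of $\mathcal{P}_G$ with certain combinatorial data on $G^*$ and then recognize that data as nowhere-zero $2$-flows. By Theorem \ref{thm:facets} and Lemma \ref{lem:nonzero}, since $G$ is bipartite and connected, a facet of $\mathcal{P}_G$ corresponds to a function $f\colon V\to\ZZ$ (well-defined up to an additive constant) such that $|f(u)-f(v)|=1$ for \emph{every} edge $uv\in E$; condition (ii) of Theorem \ref{thm:facets} is then automatic. Equivalently, a facet is the same as an assignment of a sign $\sigma(e)\in\{+1,-1\}$ to each edge $e$ of $G$ that is \emph{consistent}, i.e.\ arises as $\sigma(e)=f(h(e))-f(t(e))$ for some $f\colon V\to\ZZ$ — here I fix once and for all a tail and head for each edge as in the setup. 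Note that two functions $f$, $f'$ give the same signing if and only if they differ by a constant, so facets are in bijection with such consistent signings.

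Next I would translate "consistent" into a condition on $G^*$. A signing $\sigma\colon E\to\{\pm1\}$ lifts to $f\colon V\to\ZZ$ (unique up to a constant) precisely when the "discrete gradient" $\sigma$ is closed, i.e.\ its sum around every cycle of $G$ vanishes. Dually, cycles of $G$ correspond to edge cuts of $G^*$, so — after transporting $\sigma$ to a function on $E(G^*)=E(G)$ via planar duality — the condition becomes that $\sigma$ is orthogonal to every cut of $G^*$, which is exactly the cycle/flow condition on $G^*$. Thus consistent signings of $G$ correspond bijectively to $\ZZ$-valued flows on $G^*$ all of whose entries lie in $\{+1,-1\}$. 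Such a flow is automatically nowhere zero and automatically a $2$-flow (since $|{\pm1}|<2$), and conversely every nowhere-zero $2$-flow on $G^*$ has all entries in $\{\pm1\}$. Hence $\facets{\mathcal{P}_G}=\ifl_{G^*}(2)$.

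The one point that needs care — and the step I expect to be the main obstacle — is getting the planar duality bookkeeping exactly right: one must check that the bijection $E(G)\leftrightarrow E(G^*)$ sends cycles of $G$ to minimal edge cuts (bonds) of $G^*$, so that "$\sigma$ closed on all cycles of $G$" becomes "$\sigma$ orthogonal to all bonds of $G^*$" $=$ "$\sigma$ lies in the cycle space of $G^*$" $=$ "$\sigma$ is a flow on $G^*$" (with respect to the induced orientation data $h^*,t^*$ on $G^*$). This uses the classical fact that, for a connected planar graph, the cut space of $G$ equals the cycle space of $G^*$ under the natural edge identification; bipartiteness of $G$ is what forces every edge-signing coming from an $f$ to have all values $\pm 1$ rather than possibly $0$, i.e.\ it is exactly what makes \emph{all} consistent signings give facets and vice versa. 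Once this is set up, it is worth noting that the same argument more generally identifies $\facets{\mathcal{P}_G}$ for bipartite planar $G$ with $\mfl_{G^*}$-type data, but for the present statement the clean conclusion is the claimed equality with $\ifl_{G^*}(2)$.
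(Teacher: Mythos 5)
Your argument is correct and takes essentially the same route as the paper: both pass from facet-defining labelings $f$ (which, by Lemma \ref{lem:nonzero}, induce $\pm 1$-signings of edges with vanishing sums around cycles) to flows on $G^*$ via planar duality, using the identification of the cut space of $G$ with the cycle space of $G^*$ (as in Diestel, Lemma 6.5.2). The only cosmetic difference is that the paper spells out the signed circuit-sum condition explicitly and then transports it edge-by-edge, while you phrase the same step in the language of cycle/cut spaces.
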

\begin{proof}

Facets of $\mathcal{P}_G$ correspond bijectively to edge-labelings $\lambda: E\to \{\pm 1, 0\}$ that are nonzero on some connected spanning set of edges and that "sum to $0$ on every oriented circuit of $G$" (formally: $\sum_{i}\epsilon_i\lambda(e_i)=0$ whenever $v_0,e_1,v_1,e_2,\ldots,e_l$ is a circuit of $G$ -- i.e., $\{v_i,v_{i+1}\}=\{t(e_i),h(e_i)\}$ --, with $\epsilon_i=1$ if $v_i=t(e_i)$ and $\epsilon_i=-1$ otherwise). In fact, functions $f$ satisfying the conditions in Theorem \ref{thm:facets} define labelings $\lambda$ as above by setting 
$\lambda(e):=f(h(e))-f(t(e))$ for all $e\in E$, and this correspondence is one-to-one. Now, if $G$ is bipartite, such $\lambda$ can never assume the value zero by Lemma \ref{lem:nonzero}.

If $G$ is planar, then the edges of $G$ correspond bijectively to the edges of the dual graph $G^*=(V^*,E^*)$: call $e\mapsto e^*$ this bijection. Now any labeling $\lambda$ of the edges of $G$ induces a labeling $\lambda^*$ of the edges of $G^*$ via $\lambda^*(e^*):=\lambda(e)$. Recall also, e.g., from \cite[Lemma 6.5.2]{Diestel} that there is a (canonical) choice of $h(e^*)$ and $t(e^*)$ such that $\lambda^*$ is a flow if and only if $\lambda$ ``sums to $0$ on every circuit of $G$'' in the sense above.

If $G$ is planar and bipartite, then, valid facet-defining labelings $\lambda$ correspond bijectively to nowhere-zero $2$-flows $\lambda^*\in \{\pm 1\}^E$ on $G^*$.
\end{proof}

\begin{remark}
Via \cite[Remark 1]{Kochol} we immediately recover that when joining two bipartite (planar, connected) graphs by at most one vertex the number of facets of the symmetric edge polytope of the resulting graph is the product of the numbers of facets of the two joined graphs (cf. Remark \ref{rem:poset}).
\end{remark} 

If $G=(V,E)$ is a graph, let $A_G$ denote the signed incidence matrix of $G$. This is a unimodular $\vert V \vert \times \vert E \vert$ matrix whose entry in row $v$ and column $e$ is $1$ if $h(e)=v$, $-1$ if $t(e)=v$ and $0$ otherwise. We let $E_{G}(t)$ denote the Ehrhart polynomial of the polytope
$$
\mathcal Q_G:=[-1,1]^{E} \cap \ker (A_G) \subseteq \mathbb R^{E}.
$$

\begin{lemma} \label{lem:EE}
Let $G$ be a planar graph. Then,
$$E_G(t)=E_{\mathcal P_{G^*}^{\triangle}}(t),$$
where $^{\triangle}$ denotes polar duality of polytopes.
\end{lemma}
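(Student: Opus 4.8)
The plan is to identify both $\mathcal Q_G$ and the polar dual $\mathcal P_{G^*}^{\triangle}$ --- each with its appropriate lattice --- with the polytope of $G$-flows of sup-norm at most one, by means of the classical fact that planar duality turns the (integral) cycle space of $G$ into the (integral) cut space of $G^*$. Throughout I assume $G$, and hence $G^*$, connected; this is the only setting in which $G^*$ is defined, and it is the case of interest here.

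First I would unwind the polar dual. Since $G^*$ is connected, $\mathcal P_{G^{*}}$ spans the hyperplane $\{\sum_{v}y_{v}=0\}$, whose natural lattice is $N:=\ZZ^{V^{*}}\cap\{\sum_{v}y_{v}=0\}$ and whose dual lattice is $N^{*}=\ZZ^{V^{*}}/\ZZ\mathbf 1$ (one checks that $c$ pairs integrally with every $\mathbf e_i-\mathbf e_j$ exactly when all $c_i$ are congruent modulo $\ZZ$). Because $\mathcal P_{G^{*}}$ is the convex hull of the $\pm(\mathbf e_{i}-\mathbf e_{j})$ with $ij\in E^{*}$, the definition of polar duality gives
\[\mathcal P_{G^{*}}^{\triangle}=\{[y]\in\RR^{V^{*}}/\RR\mathbf 1:\ |y_{i}-y_{j}|\le 1\ \text{ for all }ij\in E^{*}\},\]
so the lattice points of its $t$-th dilate are precisely the classes of integral functions $y\colon V^{*}\to\ZZ$ with $|y_{i}-y_{j}|\le t$ on every edge of $G^{*}$. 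I would then bring in the coboundary map $\partial:=A_{G^{*}}^{\top}\colon\RR^{V^{*}}\to\RR^{E^{*}}$, $(\partial y)_{e}=y_{h(e)}-y_{t(e)}$: its kernel is $\RR\mathbf 1$ and its image is the cut space of $G^{*}$, and since $A_{G^{*}}$ is totally unimodular and $G^{*}$ is connected, $\partial$ restricts to a lattice isomorphism from $N^{*}$ onto the (saturated) integral cut lattice $\ZZ^{E^{*}}\cap\operatorname{im}_{\RR}(A_{G^{*}}^{\top})$. Under this isomorphism $\mathcal P_{G^{*}}^{\triangle}$ is carried exactly onto $[-1,1]^{E^{*}}\cap\operatorname{im}_{\RR}(A_{G^{*}}^{\top})$, because $|y_{i}-y_{j}|\le 1$ for all $ij\in E^{*}$ says precisely that $\partial y\in[-1,1]^{E^{*}}$.

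Next I would invoke planar duality in its integral form, exactly as in the proof of Proposition \ref{prop:facetsbipartite}: by \cite[Lemma 6.5.2]{Diestel}, with the appropriate choice of orientations the edge bijection $e\leftrightarrow e^{*}$ identifies the cut space of $G^{*}$ with the cycle space $\ker(A_{G})$ of $G$, compatibly with both the real spans and the integral structures. Being induced by a signed permutation of coordinates, this bijection carries the cube $[-1,1]^{E^{*}}$ onto $[-1,1]^{E}$ and the integral cut lattice of $G^{*}$ onto the integral cycle lattice $\ZZ^{E}\cap\ker(A_{G})$. Composing with $\partial$, I obtain a lattice isomorphism $\varphi\colon N^{*}=\ZZ^{V^{*}}/\ZZ\mathbf 1\ \xrightarrow{\ \sim\ }\ \ZZ^{E}\cap\ker(A_{G})$ that maps $\mathcal P_{G^{*}}^{\triangle}$ onto $[-1,1]^{E}\cap\ker(A_{G})=\mathcal Q_{G}$. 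Hence $(\mathcal P_{G^{*}}^{\triangle},N^{*})$ and $(\mathcal Q_{G},\ZZ^{E}\cap\ker(A_{G}))$ are unimodularly equivalent lattice polytopes --- in particular both have integral vertices, which for $\mathcal Q_{G}$ also follows directly from the total unimodularity of its defining system --- and unimodularly equivalent lattice polytopes have the same Ehrhart polynomial. Therefore $E_{G}(t)=E_{\mathcal P_{G^{*}}^{\triangle}}(t)$.

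The steps involving the unwinding of polar duality and the uses of total unimodularity (saturation of the cut/cycle lattices, integrality of vertices) are routine. The one genuinely delicate point is the orientation-and-integrality bookkeeping in planar duality --- that the edge bijection really sends the integral cut lattice of $G^{*}$ onto the integral cycle lattice of $G$, and the cube onto the cube --- but this is exactly the content of \cite[Lemma 6.5.2]{Diestel} invoked above, on which the proof of Proposition \ref{prop:facetsbipartite} already rests, so I do not expect it to cause trouble.
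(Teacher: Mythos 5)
Your proof is correct and takes essentially the same route as the paper: both establish a unimodular lattice equivalence between $\mathcal Q_G$ (with lattice $\mathbb Z^E\cap\ker(A_G)$) and $\mathcal P_{G^*}^{\triangle}$ (with the dual lattice of the sum-zero hyperplane) by combining the identification of $\ker(A_G)$ with the cut space of $G^*$ and planar duality. The only real difference is one of implementation: the paper fixes a spanning tree of $G^*$ and writes everything in explicit coordinate bases of $W$ and $U$, whereas you realize the same lattice isomorphism intrinsically via the coboundary map $A_{G^*}^{\top}$ and the quotient description $N^*\cong\mathbb Z^{V^*}/\mathbb Z\mathbf 1$ of the dual lattice, which avoids the case split on bounded versus unbounded faces.
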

\begin{proof}
Consider the signed incidence matrix $A_{G^*}$ of the (planar) dual of $G$ with, e.g., the canonical orientation \cite[Lemma 6.5.2]{Diestel}. It is well known that $\ker (A_G)$ equals the rowspace of $A_{G^*}$. Let us call $W$ this linear subspace of $\mathbb R^E$. Writing $w_0,\ldots, w_s$ for the rows of $A_{G^*}$, we may suppose that $w_1,\ldots,w_s$  correspond to vertices of $G^*$ associated to bounded faces of $G$. Then, $w_1,\ldots,w_s$ give a unimodular basis of $W$ \cite[\S 14.7]{GodsilRoyle}: this means that 
$$W \cap \mathbb Z^d = \langle w_1,\ldots,w_s \rangle_{\mathbb Z}.$$

In particular, any $x\in \ker (A_G)$ has a unique expansion
\begin{equation}\label{eq:x}
x=\sum_{i=1}^s r_iw_i,\quad\quad r_i\in \mathbb R.
\end{equation}
As is customary, we label the columns of $A_{G^*}$ by the edge set $E$ of $G$ in the natural way, and we write $A_{G^*}(e,i)$ for the entry in the $e$-th column and $i$-th row. Let 
$$Z:=\{i\in [s] \mid \textrm{ for some }e\in E:\,\, A_{G^*}(e,j)= 0 \textrm{ iff } j\in [s]\setminus i\},$$ 
i.e., the indices of all vertices of $G^*$ that are adjacent to the vertex associated to the unbounded face of $G$. Now, for every $e\in E$ the $e$-coordinate of the point $x$ as in \eqref{eq:x} is
$$
x_e=\left\{
\begin{array}{ll}
A_{G^*}(e,i)r_i & \textrm{if } e \textrm{ is incident to }w_0,w_i\textrm{ in }G^* \\
A_{G^*}(e,i)(r_i - r_j) & \textrm{if } e \textrm{ is incident to }w_i,w_j\textrm{ in }G^*, i,j\neq 0 \\
\end{array}
\right.
$$
Since the nonzero entries of $A_{G^*}$ are $+1$ or $-1$, we see that
$$
\mathcal Q_G=\left\{x\in \ker (A_G) 
\left\vert
\begin{array}{l}
 \vert r_i \vert \leq 1 \textrm{ for }i\in Z,\\
\vert r_i - r_j \vert \leq 1 \textrm{ for every }e\in E \textrm{ with }\{h(e),t(e)\}=\{i,j\}\subseteq [s]
\end{array}
\right. 
\right\}.
$$
Now consider the subspace $U$ defined in $\mathbb Z^{(V^*)}$ as the set of all points whose coordinates sum to zero. Any spanning tree $T$ of $G^*$ gives a unimodular basis $\{u_e\}_{e\in E(T)}$ of $U$, and hence of $U^*$, by $u_e=\epsilon_{i}-\epsilon_j$ if $e=\{i,j\}$.  The linear transformation $\Lambda:U\to W$ defined by setting, for all $e\in E(T)$,
$$
u_e\mapsto\left\{
\begin{array}{ll}
w_i & \textrm{if } e \textrm{ is incident to }w_0,w_i\textrm{ in }G^* \\
w_i-w_j & \textrm{if } e \textrm{ is incident to }w_i,w_j\textrm{ in }G^*, i,j\neq 0 \\
\end{array}
\right.
$$
maps the unimodular basis $\{u_e\}_{e\in E(T)}$ of $U$ into the unimodular basis $\{w_i\}_{i\in [s]}$ of $W$ and has $\Lambda(\mathcal P_{G^*}^{\triangle})=\mathcal Q_G$. The claim follows.
\end{proof}

The ``inside-out'' approach to Ehrhart theory by Beck and Zaslavsky {\cite{BZ1}} leads to an explicit expression for the integer flow polynomial  which, in our situation, becomes the following.

\begin{proposition}\label{prop:inout} 
Let $G$ be a planar bipartite graph. Then,
$$\facets{\mathcal P_G}
=\sum_{H\in \mathcal L(G)} \mu_{\mathcal L(G)}(H) 
E_{\mathcal P^\triangle _{{G/H}}}(1)
$$
where $\mu_{\mathcal L(G)}$ is the M\"obius function of $\mathcal L(G)$, the poset of all subsets $H\subseteq E$ that are closed in the graphic matroid of $G$, and where we identify the set of edges of $G$ and of its planar dual $G^*$.
\end{proposition}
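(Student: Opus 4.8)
The plan is to run the Beck--Zaslavsky ``inside-out'' machinery on the integer flow polytope of $G^{*}$: Proposition~\ref{prop:facetsbipartite} rewrites the left-hand side as a flow count, a M\"obius-inversion formula expands this count over the intersection lattice of a coordinate-hyperplane arrangement, and Lemma~\ref{lem:EE} identifies each resulting term with an Ehrhart value of a polar-dual symmetric edge polytope. Throughout I will assume $G$ connected (otherwise both sides factor over the connected components).

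First I would fix, once and for all, a single identification of $E(G)$, $E(G^{*})$ and the coordinate directions of $\mathbb R^{E}$ — say via the canonical orientation of the planar dual used in the proof of Lemma~\ref{lem:EE} — and set $W:=\ker(A_{G^{*}})\subseteq\mathbb R^{E}$ and $\mathcal H:=\{\{x_{e}=0\}:e\in E\}$, so that $\mathcal Q_{G^{*}}\subseteq W$. By definition a nowhere-zero $2$-flow on $G^{*}$ is exactly a lattice point of $\mathcal Q_{G^{*}}$ with no vanishing coordinate, so Proposition~\ref{prop:facetsbipartite} gives $\facets{\mathcal P_G}=\ifl_{G^{*}}(2)$, and more generally $\ifl_{G^{*}}(k)$ is the number of lattice points of the $(k-1)$-st dilate of $\mathcal Q_{G^{*}}$ lying off $\bigcup\mathcal H$. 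Next I would work out the combinatorics of the restricted arrangement $\mathcal H|_{W}$: its matroid on $E$ records the linear dependencies among the forms $x_{e}|_{W}$, which are governed by $W^{\perp}$ (the row space of $A_{G^{*}}$, i.e.\ the cut space of $G^{*}$), so this matroid is the cographic matroid $M^{*}(G^{*})$. Since $G$ is planar, $M^{*}(G^{*})=M(G^{**})=M(G)$, hence the intersection lattice of $\mathcal H|_{W}$ is exactly $\mathcal L(G)$. Moreover, for a flat $H\in\mathcal L(G)$ the associated subspace $\{x\in W:x_{e}=0\text{ for all }e\in H\}$ consists of the flows of $G^{*}$ supported off $H$, i.e.\ equals $\ker(A_{G^{*}\setminus H})$, so that $\mathcal Q_{G^{*}}\cap\{x_{e}=0:e\in H\}=\mathcal Q_{G^{*}\setminus H}$.

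With this dictionary the Beck--Zaslavsky inside-out identity — which here is pure M\"obius inversion over the intersection lattice, with no Ehrhart reciprocity needed — reads
\[
\ifl_{G^{*}}(k)=\sum_{H\in\mathcal L(G)}\mu_{\mathcal L(G)}(H)\,E_{G^{*}\setminus H}(k-1),
\]
where $E_{G^{*}\setminus H}$ is the Ehrhart polynomial of $\mathcal Q_{G^{*}\setminus H}$ in the sense of Lemma~\ref{lem:EE}. It then remains to rewrite the summands: the graph $G^{*}\setminus H$ is planar, deleting an edge set in the planar dual is the same as contracting it in $G=G^{**}$, and contracting a \emph{flat} of $M(G)$ creates no loops, so $(G^{*}\setminus H)^{*}=G/H$ and $\mathcal P_{G/H}$ is defined as usual. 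Lemma~\ref{lem:EE} applied to $G^{*}\setminus H$ then yields $E_{G^{*}\setminus H}(t)=E_{\mathcal P^{\triangle}_{(G^{*}\setminus H)^{*}}}(t)=E_{\mathcal P^{\triangle}_{G/H}}(t)$. Specializing to $k=2$, so that the dilation factor is $k-1=1$, and inserting $\facets{\mathcal P_G}=\ifl_{G^{*}}(2)$ produces exactly $\facets{\mathcal P_G}=\sum_{H\in\mathcal L(G)}\mu_{\mathcal L(G)}(H)\,E_{\mathcal P^{\triangle}_{G/H}}(1)$.

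The step I expect to be the main obstacle is the matroid bookkeeping in the second paragraph: pinning down the intersection lattice of $\mathcal H|_{W}$ as $\mathcal L(G)$ and the arrangement faces as the flow polytopes $\mathcal Q_{G^{*}\setminus H}$ requires chaining $M^{*}(G^{*})=M(G)$ with the compatibility of matroid deletion/contraction and planar duality, all under one fixed identification of edges and coordinates, and it is easy to slip a dual somewhere. A secondary technical point is to make sure Lemma~\ref{lem:EE} (or its evident product-over-components extension) still applies when $G^{*}\setminus H$ — equivalently $G/H$ — fails to be connected or acquires parallel edges, and to treat the degenerate flats (e.g.\ $H=E$, where $G/H$ has no edges) so that every term on the right-hand side is honestly defined.
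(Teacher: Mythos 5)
Your proposal is correct and follows essentially the same route as the paper: apply Beck--Zaslavsky's inside-out Ehrhart count to the flow polytope of $G^{*}$, translate the sum over flats into $\mathcal L(G)$ via planar duality, rewrite each Ehrhart term through Lemma~\ref{lem:EE} using $(G^{*}\setminus H)^{*}=G/H$, and specialize at $k=2$ via Proposition~\ref{prop:facetsbipartite}. The only difference is expository: you re-derive the inside-out identity (correctly, as pure M\"obius inversion over the intersection lattice of the coordinate arrangement restricted to $\ker(A_{G^{*}})$, whose matroid is $M^{*}(G^{*})=M(G)$), whereas the paper simply cites \cite[Theorem~4.15]{BZ1} and invokes the same dualities.
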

\begin{proof}
Beck and Zaslavsky prove, for every graph $G$ and every $k$, the identity\footnote{We point out a typo in \cite[Theorem 4.15]{BZ1}: on the right-hand side of Formulas (4.3) and (4.4) the polynomial should be evaluated at $k$ instead of $k+1$.}
\begin{equation}\label{eq:BZ}
\varphi_{G}(k) = \sum_{T\in\mathcal L^*(G)}\mu(\hat{0},T)E_{{G[T^c]}}(k-1)
\end{equation}
where $\mathcal L^*(G)$ is the lattice of flats of the dual matroid to the cycle matroid of $G$ -- which is isomorphic to the lattice of flats $\mathcal L(G^*)$ of the cycle matroid of $G^*$ --  and $\mu$ is the associated M\"obius function. The claim follows by duality with Proposition \ref{prop:facetsbipartite} and Lemma \ref{lem:EE}.
\end{proof}

\begin{proposition} Let $G$ be a planar connected graph. Then, the number of integer points contained in the polar dual of the symmetric edge polytope of $G$ is 
$$
E_{\mathcal P_{G}^\triangle}(1)
=\sum_{
\substack{
S\in \mathcal L(G)\\
G/S \textrm{ bipartite}
}
}
\facets{\mathcal P_{G/S}}.
$$
\end{proposition}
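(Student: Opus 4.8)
The statement we must prove is a kind of "Möbius inversion in disguise": it extracts the top-dimensional count $E_{\mathcal P_G^\triangle}(1)$ as a sum of facet numbers over contractions $G/S$ that are bipartite. The natural strategy is to start from Proposition \ref{prop:inout}, which already expresses $\facets{\mathcal P_G}$ (for $G$ bipartite planar) in terms of an alternating sum of $E_{\mathcal P^\triangle_{G/H}}(1)$ ranging over the flats $H$ of the graphic matroid of $G$. So the plan is: first, prove the dual identity — a formula expressing $E_{\mathcal P_G^\triangle}(1)$ as a Möbius-type sum of facet numbers $\facets{\mathcal P_{G/H}}$ over flats $H$ — and then observe that most terms vanish because $\facets{\mathcal P_{G/H}}$ is zero (or the relevant quantity is zero) unless $G/H$ is bipartite, leaving exactly the sum claimed.

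Concretely, I would proceed as follows. First I would recall that Proposition \ref{prop:inout}, combined with Lemma \ref{lem:EE} (which identifies $E_G(t)=E_{\mathcal P_{G^*}^\triangle}(t)$ for planar $G$) and Proposition \ref{prop:facetsbipartite} (which identifies $\facets{\mathcal P_G}$ with a nowhere-zero $2$-flow count on $G^*$), gives us a \emph{pair} of companion identities relating facet numbers and dual-polytope point counts across the flat lattice $\mathcal L(G)$. The key algebraic fact I would invoke is that these two identities are Möbius-dual to one another: if $a(G) = \sum_{H\in\mathcal L(G)}\mu(\hat 0, H)\, b(G/H)$ holds functorially over contractions, then by Möbius inversion over the lattice $\mathcal L(G)$ one gets $b(G) = \sum_{H\in\mathcal L(G)} a(G/H)$ — note the inversion turns the signed sum into an unsigned one precisely because $\sum_{K\le H}\mu(K,H)$ is the Kronecker delta. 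Here $a(G)=\facets{\mathcal P_G}$ and $b(G)=E_{\mathcal P_G^\triangle}(1)$. The lattice over which we invert is $\mathcal L(G)$, and contraction of flats composes correctly: a flat of $G/S$ corresponds to a flat of $G$ containing $S$, so the interval $[\hat 0, H]$ in $\mathcal L(G)$ matches $\mathcal L(G/H')$ for the appropriate subflat, which is exactly what makes the inversion go through.

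The one genuine subtlety — and the step I expect to be the main obstacle — is the bipartiteness restriction on the index set. In Proposition \ref{prop:inout} the sum is over \emph{all} flats $H$, whereas here it is only over flats $S$ with $G/S$ bipartite. The resolution is that $\facets{\mathcal P_{G/S}}$ vanishes exactly when $G/S$ is not bipartite — or, more precisely, that the contribution of non-bipartite contractions is already zero in the inverted identity. I would argue this by noting that the quantity being summed, via Proposition \ref{prop:facetsbipartite}, is a nowhere-zero $2$-flow count on the planar dual, and a connected graph admits a nowhere-zero $\mathbb Z_2$-flow if and only if every vertex has even degree (equivalently, the dual is bipartite / the primal is Eulerian in the appropriate sense); when this fails the count is $0$. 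One has to be slightly careful that the facet count $\facets{\mathcal P_{G/S}}$ as a formal symbol makes sense and equals this flow count even when $G/S$ is not simple (contractions can create parallel edges and loops) — loops force the count to vanish outright, and parallel edges do not change $\mathcal P_{G/S}$, so the identification survives. Once this vanishing is in hand, restricting the unsigned sum to bipartite contractions is automatic, and the proof is complete. A cleaner alternative, if the lattice bookkeeping becomes heavy, is to substitute Proposition \ref{prop:inout} directly into the right-hand side of the claimed identity and check term-by-term, using the defining relation $\sum_{\hat 0\le H\le K}\mu(\hat 0,H)=\delta_{\hat 0, K}$, that everything collapses to $E_{\mathcal P_G^\triangle}(1)$; this avoids invoking an abstract Möbius-inversion lemma at the cost of a short double-sum manipulation.
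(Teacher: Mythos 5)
Your plan matches the paper's proof in essence: both perform M\"obius inversion on the flat lattice $\mathcal L(G)$ of a Beck--Zaslavsky type identity (the paper inverts Equation~\eqref{eq:BZ} directly, you propose inverting Proposition~\ref{prop:inout}, which is derived from it), and both conclude by observing that the nowhere-zero $2$-flow count on $(G/S)^*$ vanishes whenever $G/S$ is not bipartite, then invoking Proposition~\ref{prop:facetsbipartite} on the surviving bipartite contractions.

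One imprecision deserves correction, because you half-catch and half-miss it. You write that ``$\facets{\mathcal P_{G/S}}$ vanishes exactly when $G/S$ is not bipartite''; this is false --- the symmetric edge polytope of a connected non-bipartite graph (e.g.~$K_3$) has plenty of facets. What actually vanishes is the flow count $\ifl_{(G/S)^*}(2)$. This matters for the logic of your M\"obius inversion: since $G$ is not assumed bipartite, the ``functorial'' relation $a(G)=\sum_H\mu(\hat0,H)\,b(G/H)$ cannot be taken with $a(G)=\facets{\mathcal P_G}$, because Proposition~\ref{prop:inout} applies only to bipartite $G$ and the facet-count identity fails to propagate to non-bipartite contractions. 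You must instead set $a(G)=\ifl_{G^*}(2)$; the identity $a(G)=\sum_H\mu(\hat0,H)\,E_{\mathcal P^\triangle_{G/H}}(1)$ then holds for \emph{all} planar connected $G$ (this is just Equation~\eqref{eq:BZ} for $G^*$ at $k=2$ combined with Lemma~\ref{lem:EE}), the inversion gives $E_{\mathcal P_G^\triangle}(1)=\sum_H\ifl_{(G/H)^*}(2)$, and only then do the non-bipartite terms drop out and the bipartite ones turn into facet counts. You do gesture at this in the second half of your ``subtlety'' paragraph, so the route is sound; just be sure the M\"obius inversion is run on the flow count and not the facet count. As a minor side note, the worry about loops in $G/S$ is vacuous: since $S$ is a flat of the graphic matroid, $G/S$ is automatically loopless (any edge that would become a loop lies in the closure of $S$, hence in $S$).
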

\begin{proof}
Let $E$ denote the full edge set of $G$. Recall that for every $S\in \mathcal L (G)$  the contraction $G/S$ is dual to $G^*[S^c]$, and the lattice of flats satisfies $\mathcal L(G/S)\simeq \mathcal L (G)_{\geq S}$. We can rewrite Equation \eqref{eq:BZ} as follows:
$$
\varphi_{G^*}(2)=
\sum_{H\in \mathcal L(G)^{op}} \mu_{\mathcal L(G)^{op}}(H,\hat{0}) E_{{G^*[H^c]}}(1) 
$$
where $\hat{0}$ denotes the minimal element of $\mathcal L(G)$, i.e., the set of all loops of $G$. Passing to the contraction,
$$
\varphi_{(G/S)^*}(2)=
\sum_{
\substack{H\in \mathcal L(G)^{op}\\
H\leq S}} \mu_{\mathcal L(G)^{op}}(H,\hat{0}) E_{{G^*[H^c]}}(1) 
$$
(where we used that $S^c\cap H^c=H^c$ as $S\subseteq H$).
Now M\"obius inversion on $\mathcal L(G)^{op}$ gives
$$
E_{{G^*[H^c]}}(1) =\sum_{
\substack{S\in \mathcal L(G)^{op}\\
S\leq H}}
\varphi_{(G/S)^*}(2)\, .
$$
Now, if $G/S$ is not bipartite it contains an odd cycle, and thus $(G/S)^*$ contains an odd cut $D$. Since our flows are ``circulations'' in the sense of \cite[\S 6.1]{Diestel}, the net flow across any cut is $0$. Since the net flow of any nowhere-zero $2$-flow on $(G/S)^*$ across the odd cut $D$ is a sum of an odd number of terms $\pm 1$, such a flow cannot exist, so $\varphi_{(G/S)^*}(2)=0$ if $G/S$ is not bipartite. Thus, setting $H=\hat{0}$, with Lemma \ref{lem:EE} and Proposition \ref{prop:facetsbipartite} we obtain the claim. 
\end{proof}

\begin{example}
Let $G=C_n$ be an $n$-cycle and label its edges with the set $[n]$. Then the elements of $\mathcal L(G)$ are $[n]$ itself and all subsets $S\subset [n]$, $\vert S \vert < n-1$. Accordingly, $G/S$ is either the graph with one vertex and no edges (whose symmetric edge polytope has $1$ facet) or an $(n-\vert S \vert)$-cycle. We obtain
$$
\# (\mathcal P_{C_n}^\triangle \cap \mathbb Z^d)
= 1 + \sum_{\substack{0\leq i<n-1\\ n-i \textrm{ even}}}
{n \choose i }{n-i \choose (n-i)/2}\, .
$$
\end{example}

\settocdepth{part}

\bibliographystyle{siam}
\bibliography{Xbib_arXiv}

\begin{thebibliography}{10}

\bibitem{Batyrev}
{\sc V.~V. Batyrev}, {\em Dual polyhedra and mirror symmetry for
  {C}alabi--{Y}au hypersurfaces in toric varieties}, in J. Alg. Geom, vol.~3,
  1994, pp.~493--545.

\bibitem{BZ1}
{\sc M.~Beck and T.~Zaslavsky}, {\em The number of nowhere-zero flows on graphs
  and signed graphs}, Journal of Combinatorial Theory, Series B, 96 (2006),
  pp.~901 -- 918.

\bibitem{bernvstein1975number}
{\sc D.~N. Bern{\v{s}}tein}, {\em The number of roots of a system of
  equations}, Functional Analysis and its Applications, 9 (1975), pp.~183--185.
\newblock Translated from Funktsional'nyi Analiz i Ego Prilozheniya, vol. 9,
  no. 3, pp. 1--4, July--September, 1975.

\bibitem{bruns2009polytopes}
{\sc W.~Bruns and J.~Gubeladze}, {\em Polytopes, rings, and {K}-theory},
  Springer Science \& Business Media, 2009.

\bibitem{BCKV}
{\sc D.~Bump, K.-K. Choi, P.~Kurlberg, and J.~Vaaler}, {\em A local {R}iemann
  hypothesis. {I}}, Math. Z., 233 (2000), pp.~1--19.

\bibitem{charney1995euler}
{\sc R.~Charney and M.~Davis}, {\em The {E}uler characteristic of a
  nonpositively curved, piecewise {E}uclidean manifold}, Pacific Journal of
  Mathematics, 171 (1995), pp.~117--137.

\bibitem{chen2019directed}
{\sc T.~Chen}, {\em {Directed acyclic decomposition of Kuramoto equations}},
  Chaos: An Interdisciplinary Journal of Nonlinear Science, 29 (2019),
  p.~093101.

\bibitem{chen2018toric}
{\sc T.~Chen and R.~Davis}, {\em A toric deformation method for solving
  {K}uramoto equations}, arXiv preprint arXiv:1810.05690,  (2018).

\bibitem{mailDavis}
\leavevmode\vrule height 2pt depth -1.6pt width 23pt, 2019.
\newblock Personal communication.

\bibitem{chen2018counting}
{\sc T.~Chen, R.~Davis, and D.~Mehta}, {\em Counting equilibria of the
  {K}uramoto model using birationally invariant intersection index}, SIAM
  Journal on Applied Algebra and Geometry, 2 (2018), pp.~489--507.

\bibitem{cox1997ideals}
{\sc D.~Cox, J.~Little, and D.~O’Shea}, {\em Ideals, varieties, and
  algorithms. undergraduate texts in mathematics}, 1997.

\bibitem{cox2011toric}
{\sc D.~A. Cox, J.~B. Little, and H.~K. Schenck}, {\em Toric varieties},
  American Mathematical Soc., 2011.

\bibitem{delucchi2016fundamental}
{\sc E.~Delucchi and L.~Hoessly}, {\em Fundamental polytopes of metric trees
  via hyperplane arrangements}, arXiv preprint arXiv:1612.05534,  (2016).

\bibitem{Diestel}
{\sc R.~Diestel}, {\em Graph theory}, vol.~173 of Graduate Texts in
  Mathematics, Springer, Berlin, fifth~ed., 2018.
\newblock Paperback edition of [ MR3644391].

\bibitem{dorfler2014synchronization}
{\sc F.~D{\"o}rfler and F.~Bullo}, {\em Synchronization in complex networks of
  phase oscillators: A survey}, Automatica, 50 (2014), pp.~1539--1564.

\bibitem{EdlinZeilberger}
{\sc A.~E. Edlin and D.~Zeilberger}, {\em The {G}oulden--{J}ackson cluster
  method for cyclic words}, Advances in Applied Mathematics, 25 (2000),
  pp.~228--232.

\bibitem{ehrhartRational}
{\sc E.~Ehrhart}, {\em Sur les poly\`edres rationnels homoth\'etiques a {$n$}
  dimensions}, C. R. Acad. Sci. Paris, 254 (1962), pp.~616--618.

\bibitem{fulton1993introduction}
{\sc W.~Fulton}, {\em Introduction to toric varieties}, Princeton University
  Press, 1993.

\bibitem{gal2005real}
{\sc S.~R. Gal}, {\em Real root conjecture fails for five-and
  higher-dimensional spheres}, Discrete and Computational Geometry, 34 (2005),
  pp.~269--284.

\bibitem{gal2010even}
{\sc {\'S}.~R. Gal and T.~Januszkiewicz}, {\em Even-vs. odd-dimensional
  {C}harney--{D}avis conjecture}, Discrete \& Computational Geometry, 44
  (2010), pp.~802--804.

\bibitem{A3}
{\sc L.~Godinho, F.~von Heymann, and S.~Sabatini}, {\em 12, 24 and beyond},
  Adv. Math., 319 (2017), pp.~472--521.

\bibitem{GodsilRoyle}
{\sc C.~Godsil and G.~Royle}, {\em Algebraic graph theory}, vol.~207 of
  Graduate Texts in Mathematics, Springer-Verlag, New York, 2001.

\bibitem{gordon2017combinatorics}
{\sc J.~Gordon and F.~Petrov}, {\em Combinatorics of the {L}ipschitz polytope},
  Arnold Mathematical Journal, 3 (2017), pp.~205--218.

\bibitem{GouldenJackson}
{\sc I.~P. Goulden and D.~M. Jackson}, {\em Combinatorial {E}numeration}, Dover
  Publications, Inc., Mineola, NY, 2004.
\newblock With a foreword by Gian-Carlo Rota, Reprint of the 1983 original.

\bibitem{HibiReflexive}
{\sc T.~Hibi}, {\em Dual polytopes of rational convex polytopes},
  Combinatorica, 12 (1992), pp.~237--240.

\bibitem{higashitani2019arithmetic}
{\sc A.~Higashitani, K.~Jochemko, and M.~Micha{\l}ek}, {\em Arithmetic aspects
  of symmetric edge polytopes}, Mathematika, 65 (2019), pp.~763--784.

\bibitem{higashitani2017interlacing}
{\sc A.~Higashitani, M.~Kummer, and M.~Micha{\l}ek}, {\em Interlacing {E}hrhart
  polynomials of reflexive polytopes}, Selecta Mathematica, 23 (2017),
  pp.~2977--2998.

\bibitem{Huson}
{\sc D.~H. Huson, R.~Rupp, and C.~Scornavacca}, {\em Phylogenetic Networks:
  Concepts, Algorithms and Applications}, Cambridge University Press, 2010.

\bibitem{jevtic2018cyclohedron}
{\sc F.~D. Jevti{\'c}, M.~Jeli{\'c}, and R.~T. {\v{Z}}ivaljevi{\'c}}, {\em
  Cyclohedron and {K}antorovich--{R}ubinstein polytopes}, Arnold Mathematical
  Journal, 4 (2018), pp.~87--112.

\bibitem{jevtic2018polytopal}
{\sc F.~D. Jevti{\'c}, M.~Timotijevi{\'c}, and R.~T. {\v{Z}}ivaljevi{\'c}},
  {\em Polytopal {B}ier spheres and {K}antorovich-{R}ubinstein polytopes of
  weighted cycles}, arXiv preprint arXiv:1812.00397,  (2018).

\bibitem{kasprzyk2012fano}
{\sc A.~M. Kasprzyk and B.~Nill}, {\em Fano polytopes}, Strings, Gauge Fields,
  and the Geometry Behind--the Legacy of Maximilian Kreuzer,  (2012),
  pp.~349--364.

\bibitem{kaveh2012newton}
{\sc K.~Kaveh and A.~G. Khovanskii}, {\em {N}ewton-{O}kounkov bodies,
  semigroups of integral points, graded algebras and intersection theory},
  Annals of Mathematics,  (2012), pp.~925--978.

\bibitem{kirschenhofer1999algebra}
{\sc P.~Kirschenhofer, A.~Petho, and R.~Tichy}, {\em On analytical and
  {D}iophantine properties of a family of counting polynomials}, Acta
  Scientiarum Mathematicarum, 65 (1999), pp.~47--60.

\bibitem{Kochol}
{\sc M.~Kochol}, {\em Polynomials associated with nowhere-zero flows}, Journal
  of Combinatorial Theory, Series B, 84 (2002), pp.~260 -- 269.

\bibitem{kuramoto1975self}
{\sc Y.~Kuramoto}, {\em Self-entrainment of a population of coupled non-linear
  oscillators}, in International {S}ymposium on {M}athematical {P}roblems in
  {T}heoretical {P}hysics ({K}yoto {U}niv., {K}yoto, 1975), 1975, pp.~420--422.
  Lecture Notes in Phys., 39.

\bibitem{kushnirenko1976newton}
{\sc A.~G. Kushnirenko}, {\em Newton polytopes and the {B}ezout theorem},
  Functional Analysis and its Applications, 10 (1976), pp.~233--235.
\newblock Translated from Funktsional'nyi Analiz i Ego Prilozheniya, pp.
  82--83, vol. 10, no. 3, July--September, 1976.

\bibitem{MHNOH}
{\sc T.~Matsui, A.~Higashitani, Y.~Nagazawa, H.~Ohsugi, and T.~Hibi}, {\em
  Roots of {E}hrhart polynomials arising from graphs}, Journal of Algebraic
  Combinatorics, 34 (2011), pp.~721--749.

\bibitem{MPV}
{\sc J.~Melleray, F.~V. Petrov, and A.~M. Vershik}, {\em Linearly rigid metric
  spaces and the embedding problem}, Fund. Math., 199 (2008), pp.~177--194.

\bibitem{MTor}
{\sc M.~Micha{\l}ek}, {\em Selected topics on toric varieties}, Advanced
  Studies in Pure Mathematics XX,  (2017), pp.~207--252.

\bibitem{ourbook}
{\sc M.~Micha{\l}ek and B.~Sturmfels}, {\em Invitation to {N}onlinear
  {A}lgebra}, American Mathematical Society, Providence, RI, 2020.
\newblock
  \url{https://personal-homepages.mis.mpg.de/michalek/NonLinearAlgebra.pdf}.

\bibitem{nevo2011gamma}
{\sc E.~Nevo and T.~K. Petersen}, {\em On {$\gamma$}-vectors satisfying the
  {K}ruskal-{K}atona inequalities}, Discrete and Computational Geometry, 45
  (2011), pp.~503--521.

\bibitem{Nill}
{\sc B.~Nill}, {\em Volume and lattice points of reflexive simplices}, Discrete
  \& Computational Geometry, 37 (2007), pp.~301--320.

\bibitem{NoonanZeilberger}
{\sc J.~Noonan and D.~Zeilberger}, {\em The {G}oulden--{J}ackson cluster
  method: extensions, applications and implementations}, Journal of Difference
  Equations and Applications, 5 (1999), pp.~355--377.

\bibitem{ohsugi2014centrally}
{\sc H.~Ohsugi and T.~Hibi}, {\em Centrally symmetric configurations of integer
  matrices}, Nagoya Mathematical Journal, 216 (2014), pp.~153--170.

\bibitem{ohsugi2012smooth}
{\sc H.~Ohsugi and K.~Shibata}, {\em Smooth fano polytopes whose ehrhart
  polynomial has a root with large real part}, Discrete \& Computational
  Geometry, 47 (2012), pp.~624--628.

\bibitem{ohsugi2019h}
{\sc H.~Ohsugi and A.~Tsuchiya}, {\em The $h^* $-polynomials of locally
  anti-blocking lattice polytopes and their $\gamma $-positivity}, arXiv
  preprint arXiv:1906.04719,  (2019).

\bibitem{polya1926bemerkung}
{\sc G.~P{\'o}lya}, {\em Bemerkung {\"u}ber die {I}ntegraldarstellung der
  {R}iemannschen $\xi$-{F}unktion}, Acta Mathematica, 48 (1926), pp.~305--317.

\bibitem{rod02}
{\sc F.~Rodriguez-Villegas}, {\em On the zeros of certain polynomials}, Proc.
  Amer. Math. Soc., 130 (2002), pp.~2251--2254 (electronic).

\bibitem{Stks}
{\sc B.~Sturmfels}, {\em Gr{\"o}bner bases and convex polytopes}, vol.~8,
  American Mathematical Soc., 1996.

\bibitem{Syslo}
{\sc M.~M. Sys{\l}o}, {\em Characterizations of outerplanar graphs}, Discrete
  Math., 26 (1979), pp.~47--53.

\bibitem{vershik2015classification}
{\sc A.~M. Vershik}, {\em Classification of finite metric spaces and
  combinatorics of convex polytopes}, Arnold Mathematical Journal, 1 (2015),
  pp.~75--81.

\bibitem{ziegler}
{\sc G.~M. Ziegler}, {\em Lectures on {P}olytopes}, Springer-Verlag, New York,
  1995.
\newblock Revised edition, 1998; ``Updates, corrections, and more" at {\tt
  www.math.tu-berlin.de/$\sim$ziegler}.

\end{thebibliography}

\end{document}